\newtheorem{lem}{Lemma}[section]
\newtheorem{prop}[lem]{Proposition}
\newtheorem{cor}[lem]{Corollary}
\newtheorem{thm}[lem]{Theorem}
\newcommand{\Ext}{\operatorname{Ext}\nolimits}
\newcommand{\mo}{\operatorname{mod}\nolimits}
\newcommand{\End}{\operatorname{End}\nolimits}
\newcommand{\add}{\operatorname{add}\nolimits}
\newcommand{\id}{\operatorname{id}\nolimits}
\begin{document}
\title[A geometric realization...]{A geometric realization of the
  $m$-cluster category of type $\widetilde{A}$}
\author{Hermund Andr\' e Torkildsen}

\begin{abstract}
We give a geometric realization of a subcategory of the $m$-cluster
category $\mathcal{C}^m$ of type $\widetilde{A}_{p,q}$, by using
$(m+2)$-angulations of an annulus with $p+q$ marked points. We also
give a bijection between an equivalence class of $(m+2)$-angulations
and the mutation class of coloured quivers of type
$\widetilde{A}_{p,q}$. 
\end{abstract}

\maketitle

\section*{Introduction}
The cluster category was defined in \cite{bmrrt} in general and in
\cite{ccs} for Dynkin Type $A$. Their motivation was to categorify the
combinatorics of cluster algebras defined by Fomin and Zelevinsky
\cite{fz1}. In \cite{ccs} the authors defined a category with
indecomposable objects the diagonals in a regular polygon. The
geometric model was later extended to Dynkin type $D$ by Schiffler
\cite{s} and to Dynkin type $\widetilde{A}$ by Br\"ustle and Zhang
\cite{bz}.  

The $m$-cluster category (see for example \cite{k,t,w,z,zz})
generalizes the cluster category. Baur and Marsh extended the geometric
models of the cluster category of type $A$ \cite{bm2} and $D$
\cite{bm1} to $m$-cluster categories. For example, in the $A$-case,
they consider $m$-diagonals of regular polygons. In this paper we will
consider $m$-cluster categories of type $\widetilde{A}$. 

The results in this paper are a part of the author's PhD thesis from
December 2010 \cite{to4}.

\section{$m$-cluster categories}

In \cite{bmrrt} the cluster category was defined as an orbit category
of the derived category. Let $H=kQ$ be a finite dimensional hereditary
algebra over an algebraically closed field $k$, where $Q$ is a
quiver. The cluster category $\mathcal{C}_H$ is the orbit category
$\mathcal{D}^b(H)/\tau^{-1}[1]$, where $\tau$ is the Auslander-Reiten
translation, $[1]$ is the shift functor and $\mathcal{D}^b(H)$ the 
bounded derived category of $\mo H$. We can also consider the orbit
category $\mathcal{D}^b(H)/\tau^{-1}[m]$, and this is the
$m$-cluster category. The $m$-cluster category is a Krull-Schmidt
category for all $m$, and it has an AR-translate $\tau$. From \cite{k}
we also know that it is a triangulated category for all $m$. 

The $m$-cluster categories come equipped with a class of objects
called $m$-cluster tilting objects. An $m$-cluster tilting object $T$
is an object with the property that $X$ is in $\add T$ if and only if 
$\Ext_{\mathcal{C}_H^m}^i(T,X)=0$ for all $i \in \{1,2,...,m\}$. An
object $X$ is called maximal $m$-rigid if it has the property that $X
\in \add T$ if and only if $\Ext_{\mathcal{C}_H^m}^i(T \oplus X,T
\oplus X)=0$ for all $i \in \{1,2,...,m\}$. A maximal $m$-rigid object 
is an $m$-cluster tilting object \cite{w,zz}, and an $m$-cluster
tilting object has always $n$ non-isomorphic indecomposable summands
\cite{z}. The algebra $\End_{\mathcal{C}_H^m}(T)$ is called an
$m$-cluster tilted algebra when $T$ is $m$-cluster tilting.

If $\bar{T}$ is an object in $\mathcal{C}_H^m$ with $n-1$
non-isomorphic indecomposable direct summands, such that
$\Ext_{\mathcal{C}_H^m}^i(\bar{T},\bar{T})=0$ for $i \in
\{1,2,...,m\}$, there exist exactly $m+1$ non-isomorphic objects $T'$
(called complements) such that $\bar{T} \oplus T'$ is an $m$-cluster
tilting object \cite{w,zz}. The object $\bar{T}$ is called an almost
complete $m$-cluster tilting object. Let $T_k^{(c)}$, where $c\in
\{0,1,2,...,m\}$, be the complements of $\bar{T} = T/T_k$. Then we
know from \cite{iy} that the complements are connected by $m+1$
exchange triangles $$T_k^{(c)} \rightarrow B_k^{(c)} \rightarrow
T_k^{(c+1)}\rightarrow,$$ where $B_k^{(c)}$ is in $\add{\bar{T}}$.

\section{Quiver mutation}

Quiver mutation was defined by Fomin and Zelevinsky in their work with
cluster algebras. Buan and Thomas extended quiver mutation to a class
of coloured quivers to model mutation in $m$-cluster categories. Let
$T$ be an $m$-cluster tilting object. In \cite{bt} they associate to $T$ a
coloured quiver $Q_T$ in the following way. There is a vertex in $Q_T$
for every indecomposable summand of $T$. The arrows have colours
chosen from the set $\{0,1,2,...,m\}$. If $T_i$ and $T_j$ are two
indecomposable summands of $T$ corresponding to vertex $i$ and $j$ in
$Q_T$, there are $r$ arrows from $i$ to $j$ of colour $c$, where $r$
is the multiplicity of $T_j$ in $B_i^{(c)}$.

They show that quivers obtained in this way have no loops. Also, if
there is an arrow from $i$ to $j$ with colour $c$, then there is no
arrow from $i$ to $j$ with colour $c' \neq c$. If there are $r$ arrows
from $i$ to $j$ of colour $c$, then there are $r$ arrows from $j$ to
$i$ of colour $m-c$. 

Coloured quiver mutation keeps track of the exchange of indecomposable
summands of $m$-cluster tilting objects. The mutation of $Q_T$ at
vertex $j$ is defined as the quiver $\mu_j(Q_T)$ obtained as follows.  

\begin{enumerate}
\item For each pair of
  arrows $\xymatrix{i\ar[r]^{(c)}&j\ar[r]^{(0)}&k\\}$, where $i \neq
  k$ and $c \in \{0,1,...,m\}$, add an arrow from $i$ to $k$ of colour
  $c$ and an arrow from $k$ to $i$ of colour $m-c$. 
\item If there exist arrows of different colours from a vertex $i$ to
  a vertex $k$, cancel the same number of arrows of each colour until
  there are only arrows of the same colour from $i$ to $k$. 
\item Add one to the colour of all arrows that goes into $j$, and
  subtract one from the colour of all arrows going out of $j$.
\end{enumerate}

In \cite{bt} they prove that if $T = \oplus_{i=1}^{n} T_i$ is an
$m$-cluster tilting object in $\mathcal{C}_H^m$ and $T' = T / T_j
\oplus T_j^{(1)}$ is an $m$-cluster tilting object where there is an
exchange triangle $T_j \rightarrow B_j^{(0)} \rightarrow T_j^{(1)}
\rightarrow$, then $Q_{T'} = \mu_j(Q_T)$. We note that this was already
known for $m=1$ \cite{bmr2}. In \cite{zz} it was shown that any
$m$-cluster tilting object can be reached from any other $m$-cluster
tilting object via iterated mutation. And in \cite{bt} the authors
show that for an $m$-cluster category $\mathcal{C}_H^m$, where $H=kQ$,
all quivers of $m$-cluster tilted algebras are given by repeated
mutation of $Q$. 

When $m=1$, it was shown in \cite{br} that the mutation class of an
acyclic quiver $Q$ is finite if and only if the underlying graph of
$Q$ is either Dynkin, extended Dynkin or $Q$ has at most two
vertices. This was generalized in \cite{to2}. A coloured quiver $Q$
corresponding to an $m$-cluster tilting object, has finite mutation
class if and only if $Q$ is mutation equivalent to a quiver $Q'$,
where the quiver obtained from $Q'$ by removing all arrows with colour
$\neq 0$ has underlying graph Dynkin or extended Dynkin, or it has at
most two vertices, and there are only arrows of colour $0$ and $m$ in
$Q'$. 

\section{Geometric descriptions of $m$-cluster categories}

We will not go into detail about the various geometric descriptions of
$m$-cluster categories, since we will do it in detail for $m$-cluster
categories of type $\widetilde{A}$. We refer to the papers
\cite{ccs,s,bm1,bm2,bz}. 

In \cite{ccs} the authors defined the cluster category of type $A_n$
by using regular polygons and diagonals between vertices on the border
of the polygons. In \cite{bm2} they generalized this to $m$-cluster
categories. Baur and Marsh considered an $(nm+2)$-gon 
$P_{nm+2}$ and $m$-diagonals between vertices on the border of
$P_{nm+2}$. An $m$-diagonal is a diagonal that divides
$P_{nm+2}$ into two parts with number of vertices congruent to $2$
modulo $m$. They defined an additive category where the indecomposable
objects are the $m$-diagonals. The morphisms are spanned by certain
elementary moves. The authors showed that this category is equivalent
to the $m$-cluster category of type $A_n$. A set of $m$-diagonals that divides
$P_{nm+2}$ into $(m+2)$-gons is called an $(m+2)$-angulation, and such
a set always has $n$ elements. We have that an $(m+2)$-angulation
$\Delta$ is an $m$-cluster tilting object in this category, and we can
define mutation on $m$-diagonals in $\Delta$. Also, given an
$(m+2)$-angulation $\Delta$, we can define a coloured quiver
$Q_{\Delta}$ (see \cite{bm2, bt}), and mutation on $\Delta$ commutes
with mutation on $Q_{\Delta}$. We mention that the Dynkin case
$\widetilde{A}$ has been considered in \cite{bz} for $m=1$. 

Given an $(m+2)$-angulation $\Delta$, there exist, as we mentioned
above, a coloured quiver $Q_{\Delta}$. In \cite{to1} it was shown that
there exist a bijection between the set of triangulations, where two
triangulations are equivalent if they are rotations of eachother, and
the mutation class of quivers of Dynkin type $A$. This was generalized
in \cite{to3} to $m$-coloured quivers and $(m+2)$-angulations of
$P_{nm+2}$. A similar result was obtained for Dynkin type $D$ in
\cite{bto}. In the second part of this paper, we will obtain a similar
result for Dynkin type $\widetilde{A}$.

\section{$(m+2)$-angulations of $P_{p,q,m}$} \label{triangulation}

Let $m \geq 1$, $p \geq2$ and $q \geq 2$ be integers, and set
$n=p+q$. Let $P_{p,q,m}$ be a regular $mp$-gon, with a regular
$mq$-gon at its center, cutting a hole in the interior of the outer
polygon. When $m=1$ we just write $P_{p,q}$. Denote by $P_{p,q,m}^0$
the interior between the outer and inner polygon. Label the vertices
on the outer polygon $O_0,O_1,...,O_{mp-1}$ in the counterclockwise
direction, and label the vertices of the inner polygon $I_0,
I_1,...,I_{mq-1}$, in the clockwise direction. See Figure
\ref{figannulusmarked}. If one of the polygons has $2$ vertices, we
draw the polygon as a circle with two marked points. For simplicity,
we always draw the polygons such that the vertices $O_0$ and $I_0$ are
as close as possible.

 \begin{figure}[htp]
  \begin{center}
    \includegraphics[width=5cm]{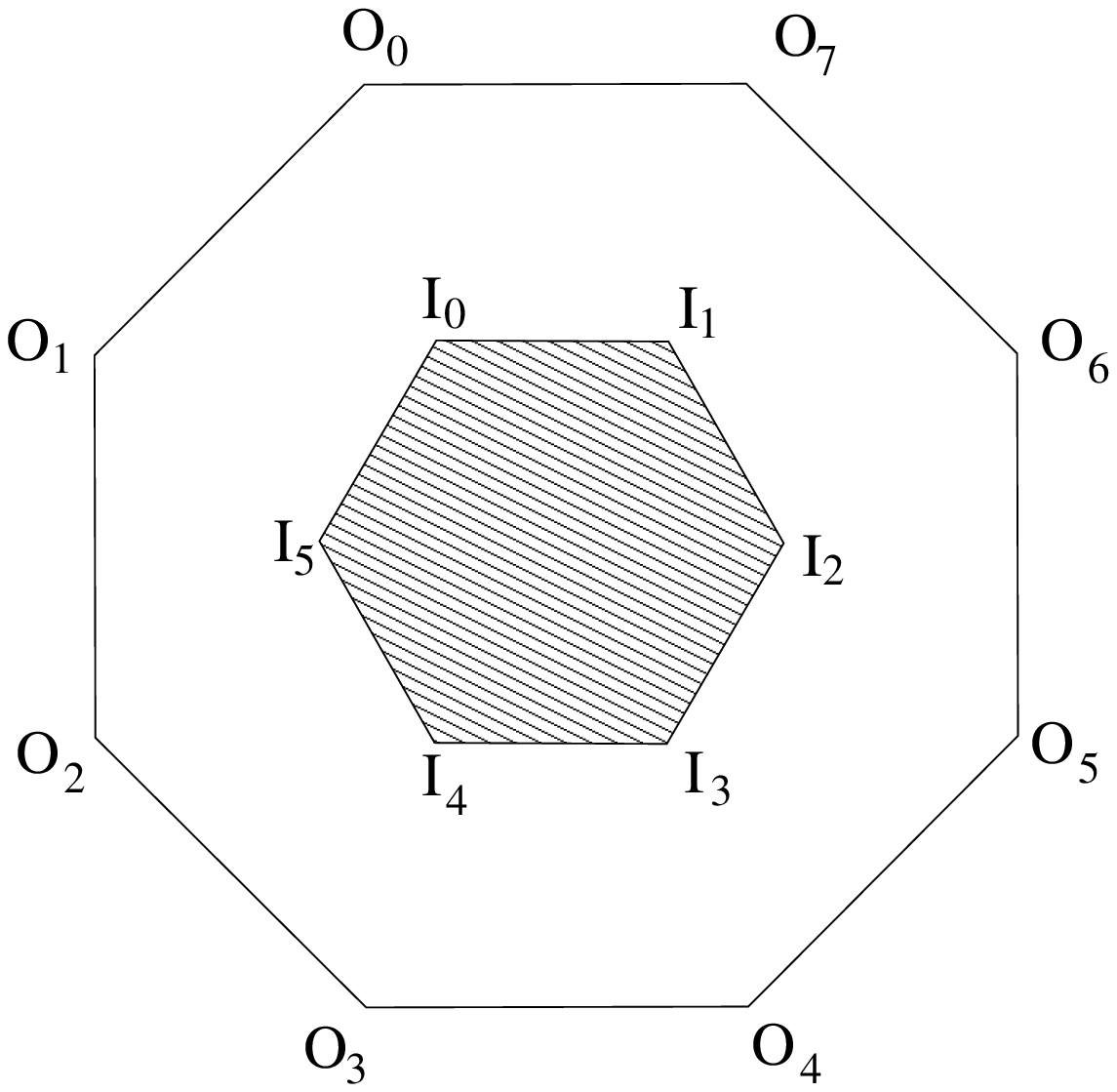}
    \includegraphics[width=5cm]{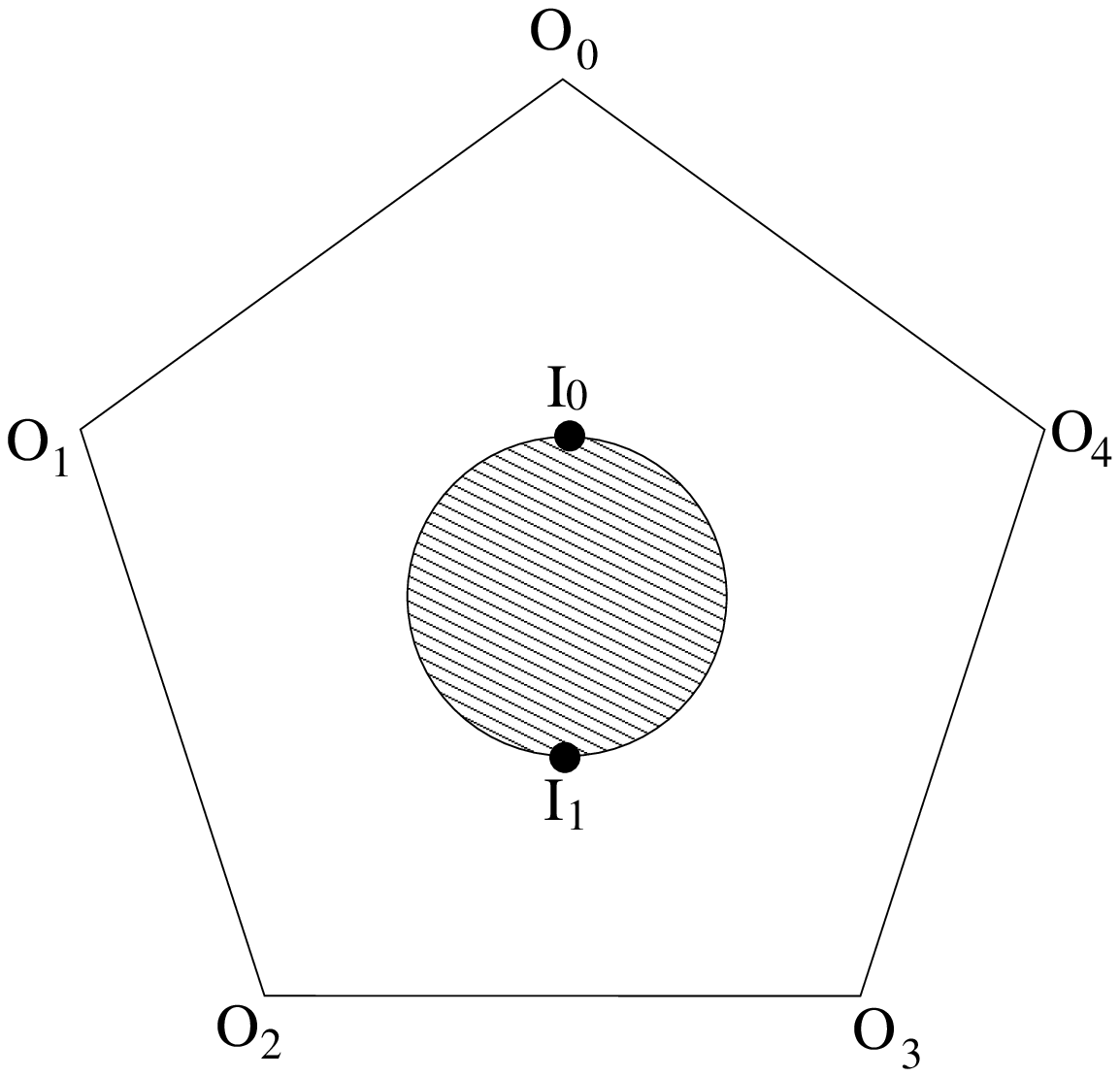}
  \end{center}\caption{\label{figannulusmarked} $P_{8,6}$ and
    $P_{5,2}$ for $m=1$.}
  \end{figure}

Let $\delta_{i,k}$ be the path in the counterclockwise direction from
$O_i$ to $O_{i+k-1}$ along the border of the outer polygon, where $k$
is the number of vertices that $\delta_{i,k}$ runs through (including
the start and end vertex). If $k = pm+1$, the path runs around the
polygon exactly once. If $k > pm + 1$, the path crosses
itself. Similarly, we denote by $\gamma_{i,k}$ the path in the
clockwise direction from $I_i$ to $I_{i+k-1}$, along the border of the
inner polygon, where $k$ is the number of vertices the path runs
through. Of course, here we always compute modulo $pm$ and $qm$. 

Now we consider paths of the following types.

\begin{itemize}
\item Type 1: A path in $P_{p,q,m}^0$ between a vertex on the outer
  polygon and a vertex on the inner polygon, i.e. a path between $O_i$
  and $I_j$ for some $i$ and $j$. 
\item Type 2: A path $\alpha$ from $O_i$ to $O_{i+k-1}$ in
  $P_{p,q,m}^0$, such that $\alpha$ is homotopic to $\delta_{i,k}$ for
  some $k \geq 3$.
\item Type 3: A path $\alpha$ from $I_i$ to $I_{i+k-1}$ in
  $P_{p,q,m}^0$, such that $\alpha$ is homotopic to $\gamma_{i,k}$ for
  some $k \geq 3$.
\end{itemize}

 \begin{figure}[htp]
  \begin{center}
    \includegraphics[width=3cm]{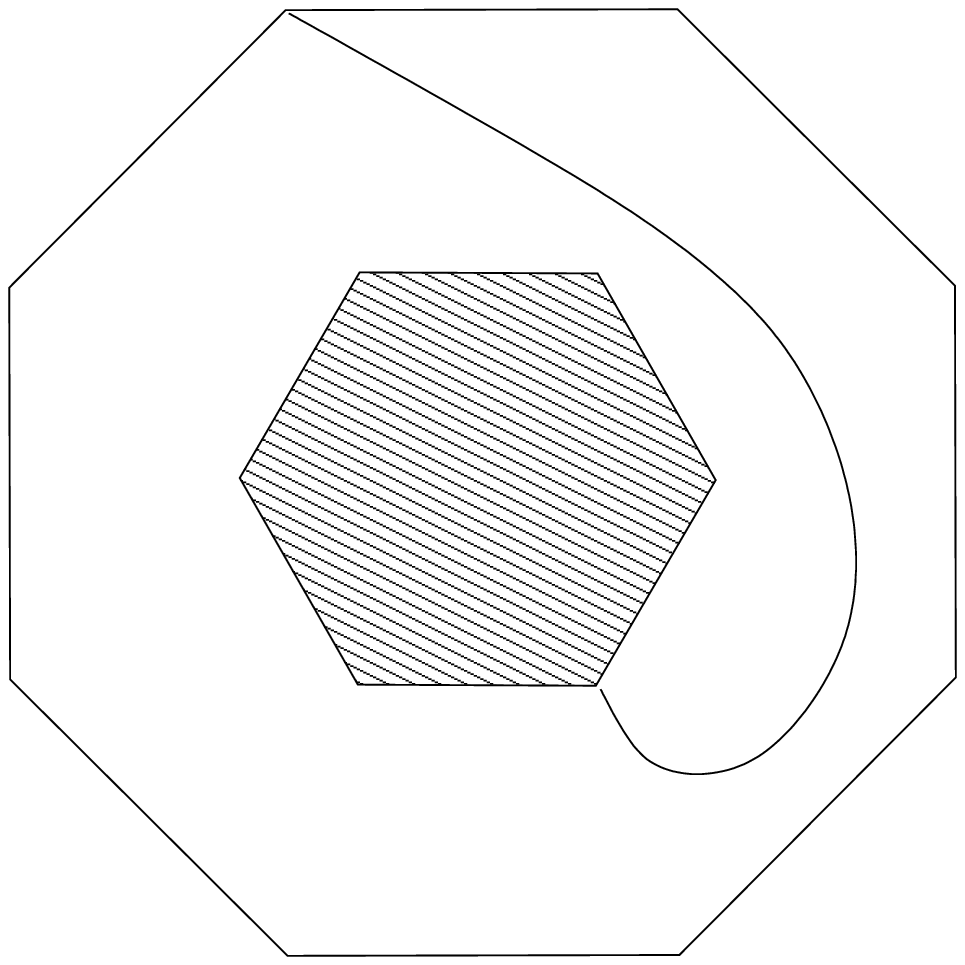}
    \includegraphics[width=3cm]{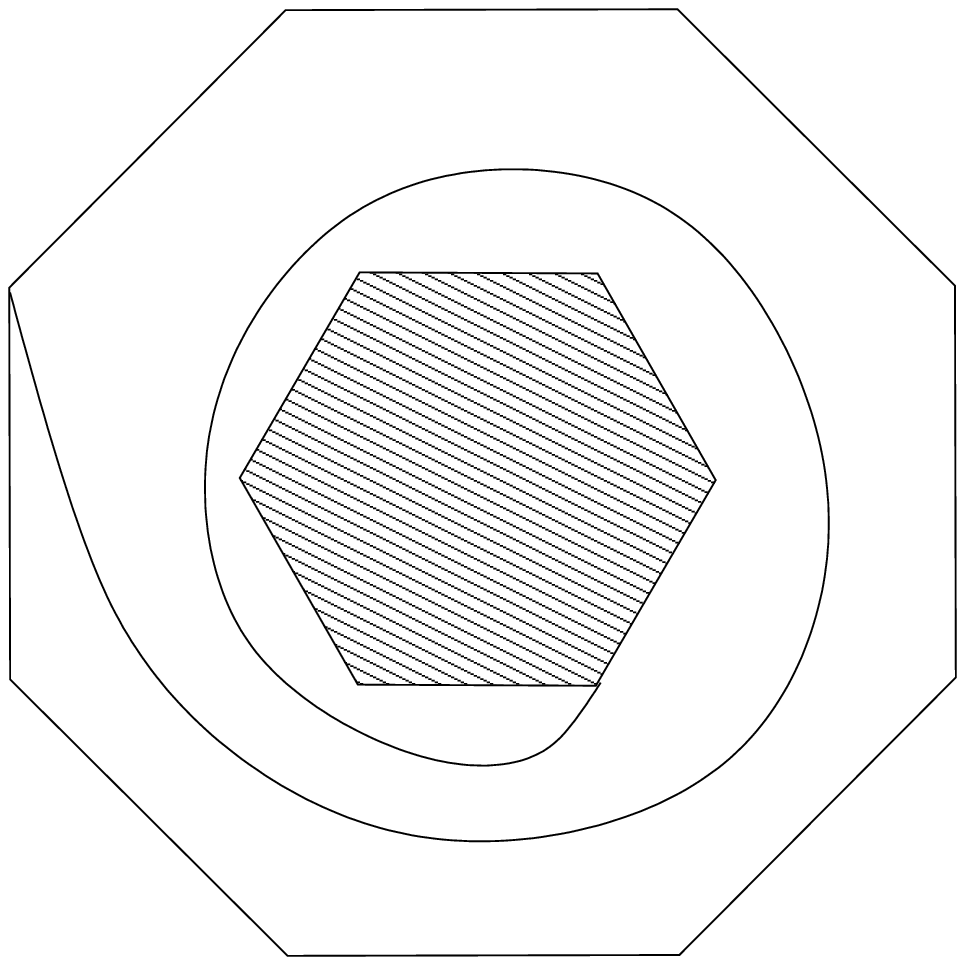}
    \includegraphics[width=3cm]{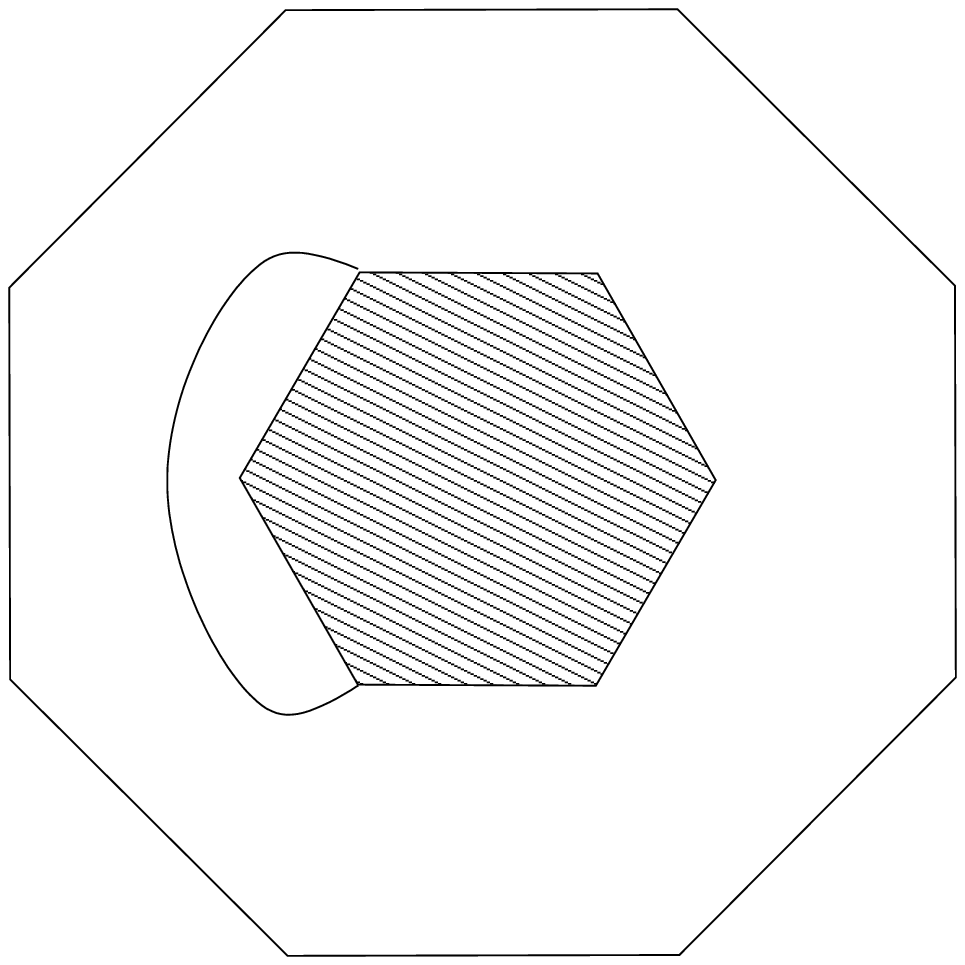}
    \includegraphics[width=3cm]{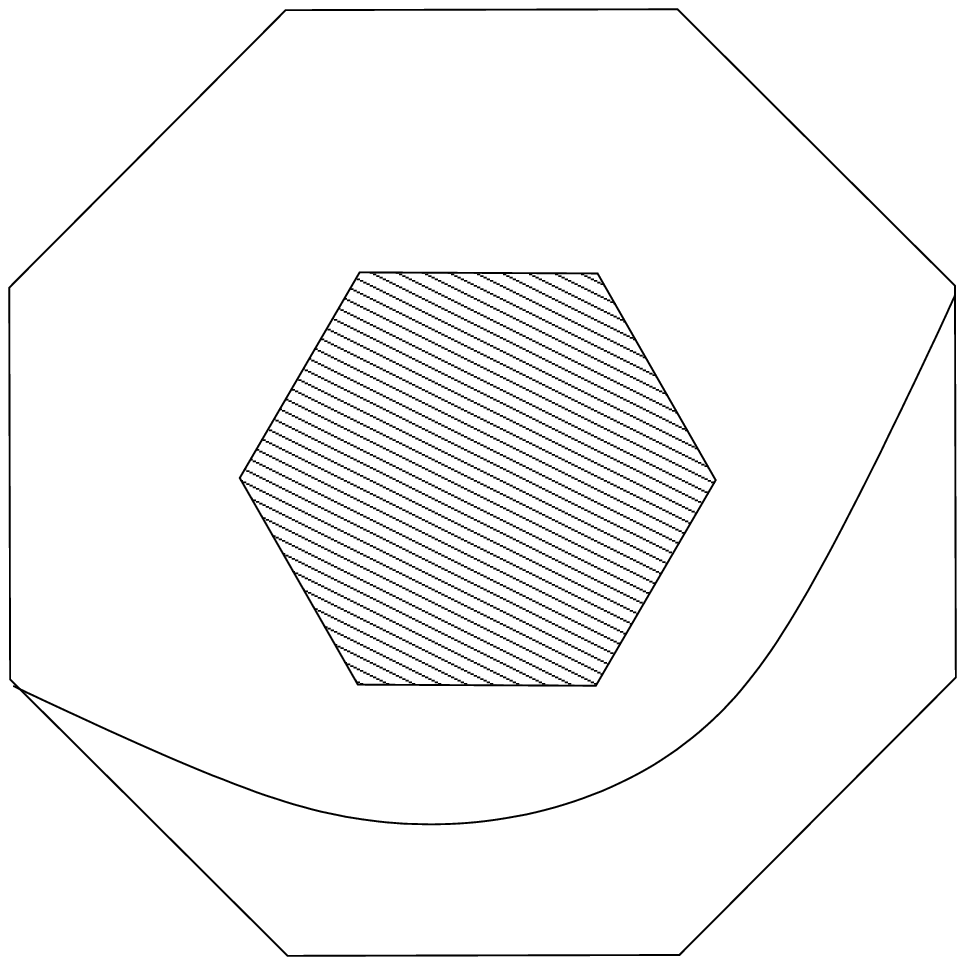}
  \end{center}\caption{\label{figannulusdiagonal} Some examples of
    diagonals in $P_{8,6}$.}
  \end{figure}

See Figure \ref{figannulusdiagonal} for some examples. Note that the
winding number of a path of any type can be greater than 1. Also, a
path of Type 1 is homotopic to a path that never crosses itself. 

Now, we define two paths to be equivalent if they start in the same
vertex, end in the same vertex and they are homotopic. We call these 
equivalence classes \emph{diagonals} in $P_{p,q,m}$. Let $O_{i,k}$
denote the diagonals homotopic to $\delta_{i,k}$, and let $I_{i,k}$ be
the diagonals homotopic to $\gamma_{i,k}$.  

We define the \emph{crossing number} of any two diagonals, $\alpha$
and $\beta$, to be
 $$e(\alpha,\beta) = \min\{|\alpha \cap \beta \cap
P_{p,q,m}^0|\}.$$ 
The crossing number of a diagonal $\alpha$ is defined as the minimal
number of times the diagonal crosses itself in the interior. The
crossing number of any diagonal of Type 1 is always 0.

We say that two diagonals cross if the crossing number is not 0, and
we say that a diagonal crosses itself if the crossing number of the
diagonal is not 0. This enables us to define a triangulation, which is a
maximal set of diagonals which do not cross. See Figure
\ref{figannulustriangulation} for some examples of triangulations.

  \begin{figure}[htp]
  \begin{center}
    \includegraphics[width=4.6cm]{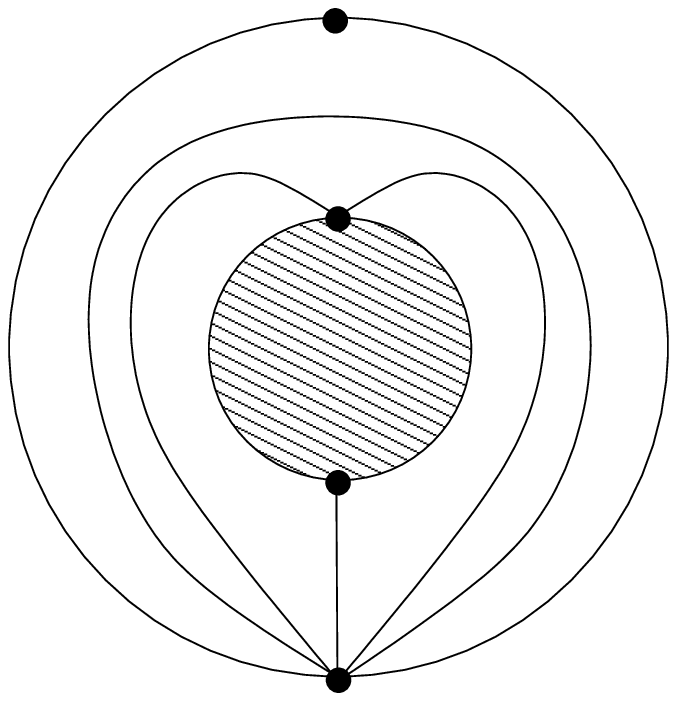}
    \includegraphics[width=4.6cm]{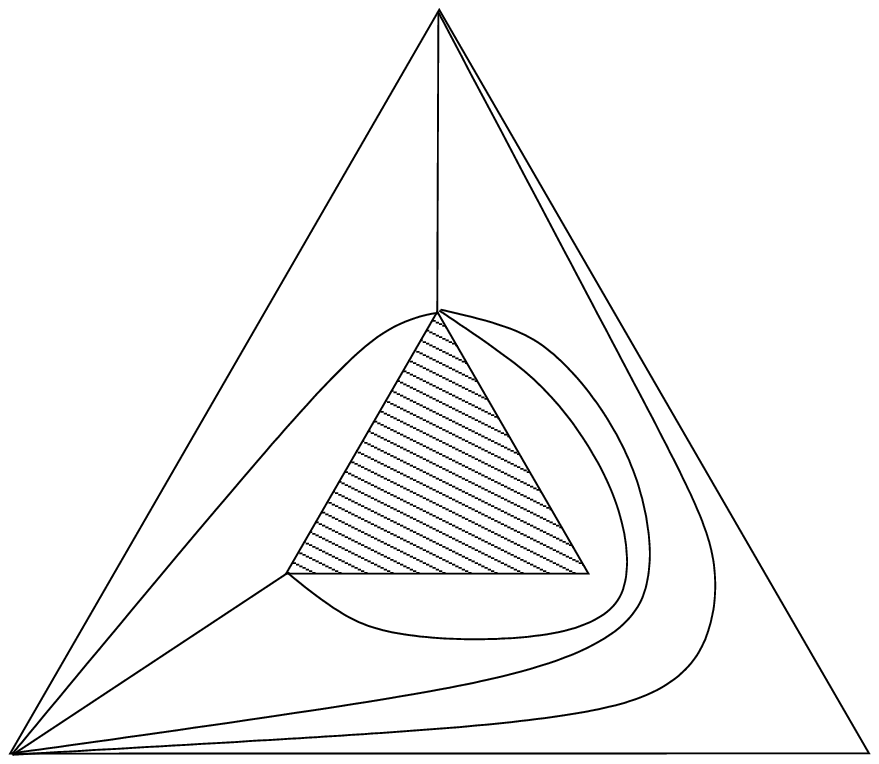}
  \end{center}\caption{\label{figannulustriangulation} Examples of
    triangulations of $P_{2,2}$ and $P_{3,3}$.}
  \end{figure}

We have the following easy lemma

\begin{lem}\label{atleastonediagonal}
In any triangulation of $P_{p,q,m}$, there exist at least one diagonal
of Type 1.
\end{lem}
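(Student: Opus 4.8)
The plan is to argue by contradiction, exploiting the maximality built into the definition of a triangulation. Suppose $\Delta$ is a triangulation of $P_{p,q,m}$ with no diagonal of Type 1, so that every diagonal of $\Delta$ has both endpoints on the outer polygon (Type 2) or both endpoints on the inner polygon (Type 3). I would then produce a diagonal $d$ of Type 1 that crosses none of the diagonals in $\Delta$; since $\Delta$ contains no such diagonal, $\Delta\cup\{d\}$ would be a strictly larger set of pairwise non-crossing diagonals, contradicting maximality. Thus the whole argument reduces to the following: \emph{if no diagonal joins the two boundary components, then a diagonal joining them can always be added.}

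The topological heart of the matter is that $P_{p,q,m}^0$ is an annulus, whose two boundary circles are the outer polygon $C_{\mathrm{out}}$ and the inner polygon $C_{\mathrm{in}}$. The key claim is that no finite union of simple arcs, each having both endpoints on a single one of these two circles, can separate $C_{\mathrm{out}}$ from $C_{\mathrm{in}}$ inside the annulus. Indeed, a subset separating the two boundary circles would have to contain a connected piece that is nontrivial in $H_1(P_{p,q,m}^0,\partial P_{p,q,m}^0)$ --- that is, either a properly embedded arc running from one circle to the other, which is precisely a Type 1 diagonal, or an essential simple closed curve isotopic to the core of the annulus --- and a union of arcs each joining a boundary circle to itself contains neither. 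Granting this, the complement of the diagonals of $\Delta$ has a connected region $R$ whose closure meets both $C_{\mathrm{out}}$ and $C_{\mathrm{in}}$. Choosing a path in $R$ from $C_{\mathrm{out}}$ to $C_{\mathrm{in}}$ and sliding its two endpoints along the boundary (possible since $\Delta$ is finite, so only finitely many points of each circle are endpoints of diagonals) until they reach vertices $O_i$ and $I_j$, I obtain a path whose homotopy class is a diagonal $d$ of Type 1 with $e(d,\alpha)=0$ for every $\alpha\in\Delta$. This is the desired contradiction.

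The step I expect to be the main obstacle is making the separation claim of the previous paragraph fully rigorous in the presence of diagonals whose winding number is greater than $1$, a possibility the paper explicitly allows. A Type 2 diagonal that winds once around the core is no longer isotopic into a collar of $C_{\mathrm{out}}$; together with $C_{\mathrm{out}}$ it encircles $C_{\mathrm{in}}$, and one must check that the complementary region on the inner side still reaches $C_{\mathrm{out}}$ near the endpoints of the diagonal, so that separation still fails. In fact such a winding diagonal is the cleanest sub-case: the region it bounds toward the hole is itself an annulus rather than a disk, and this annular region cannot be a cell of a triangulation unless some diagonal runs to $C_{\mathrm{in}}$, i.e.\ a Type 1 diagonal, again forcing the conclusion. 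The remaining care is bookkeeping near the vertices: one must ensure that after sliding, the endpoints land on genuine vertices $O_i,I_j$, and that the resulting arc can be homotoped to have crossing number $0$ with every diagonal of $\Delta$ and with itself, so that $d$ is a legitimate diagonal that may be adjoined to $\Delta$.
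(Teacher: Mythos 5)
The paper does not actually prove this lemma: it is introduced with ``We have the following easy lemma'' and stated without any argument, so there is no proof of record to compare yours against. Your proposal is a legitimate and essentially complete way to supply the missing proof: the reduction to maximality is exactly right (a triangulation is defined in Section \ref{triangulation} as a \emph{maximal} non-crossing set, so it suffices to show that a collection of Type 2 and Type 3 diagonals never obstructs the insertion of some Type 1 diagonal), and the topological content is correctly located. One caution: the separation claim as you first state it in the second paragraph is literally false, since a Type 2 diagonal $O_{i,pm+1}$ (a loop based at $O_i$ winding once around the core, which the paper's definitions permit and which is embeddable, hence admissible in a triangulation) \emph{is} an essential closed curve contained in the union and does separate $C_{\mathrm{in}}$ from $C_{\mathrm{out}}\setminus\{O_i\}$. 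You flag and repair exactly this point in your third paragraph, and the repair is the crux: any essential closed curve built from non-crossing Type 2/3 arcs must pass through a marked vertex (the arcs meet only at vertices, and a single embedded arc contains a closed curve only if it is such a loop), so the annular region on the inner side always has a vertex of the outer polygon in its closure, from which a Type 1 diagonal can be launched; iterating over the finitely many diagonals terminates. Since crossing numbers are computed in the open interior $P^0_{p,q,m}$, intersections at that shared vertex do not count, so the new Type 1 diagonal genuinely has crossing number $0$ with everything in $\Delta$, contradicting maximality. With that bookkeeping made explicit, your argument is correct, and it is considerably more careful than the paper, which simply asserts the statement.
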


Using the lemma, we can prove that the number of diagonals in any
triangulation is given by $p+q$. More generally, see \cite{fst} for an
annulus of $n_1+n_2$ marked points.  

\begin{prop}\label{numberofdiagonals}
Any triangulation of $P_{p,q,m}$ consists of exactly $p+q$ diagonals.
\end{prop}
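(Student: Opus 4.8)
The plan is to use Lemma~\ref{atleastonediagonal} to reduce the count to the disc case treated in \cite{bm2}. By the lemma, any triangulation $\Delta$ of $P_{p,q,m}$ contains a diagonal $\alpha$ of Type~1, from some $O_i$ to some $I_j$, and by the remark preceding the lemma $\alpha$ may be taken to be a simple arc. A simple arc joining the two boundary circles of an annulus cuts it into a disc, so I would cut $P_{p,q,m}$ open along $\alpha$ and track both the marked points and the remaining diagonals.

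First I would identify the resulting polygon $P'$. Cutting along $\alpha$ doubles $\alpha$ into two boundary edges and splits each endpoint $O_i,I_j$ into two vertices, leaving every other marked point untouched. Reading off the boundary of $P'$ --- the $mp$ outer vertices, one copy of $\alpha$, the $mq$ inner vertices, the other copy of $\alpha$ --- shows that $P'$ is a disc with $(mp+1)+(mq+1)=m(p+q)+2$ marked points, i.e.\ a polygon of exactly the type treated in the disc case.

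Next I would transport the triangulation. The diagonals of $\Delta$ other than $\alpha$ have crossing number $0$ with $\alpha$, so under the cutting map they descend to pairwise non-crossing chords of $P'$, and conversely every diagonal of $P'$ lifts to a diagonal of $P_{p,q,m}$ disjoint from $\alpha$; maximality of $\Delta$ then makes the image a maximal non-crossing set in $P'$, that is, an $(m+2)$-angulation. A direct Euler count on the disc $P'$ (all faces $(m+2)$-gons, $V=m(p+q)+2$ boundary vertices, $E=V+D'$, $F=D'+1$, and $(m+2)F=2D'+V$) gives $D'=(p+q)-1$ diagonals, and adding back $\alpha$ yields $|\Delta|=(p+q-1)+1=p+q$. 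Equivalently one may count on the annulus itself: with $V=m(p+q)$, the relation $\chi=0$ forces $F=D$, and the incidence identity $(m+2)F=2D+m(p+q)$ then gives $mD=m(p+q)$, so $D=p+q$.

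The step I expect to be delicate is the transport, together with the justification that every face really is an $(m+2)$-gon. The lemma is precisely what rules out a surviving annular region: a single Type~1 diagonal suffices to open the annulus into a disc, so each complementary region is forced to be a genuine polygon. What still needs care is the behaviour of the winding diagonals of Types~2 and~3, whose winding number may exceed $1$; one must check that after cutting they become honest simple chords of $P'$ and that $\Delta\setminus\{\alpha\}$ corresponds bijectively to the diagonals of an $(m+2)$-angulation of $P'$, preserving both non-crossingness and the $(m+2)$-gon faces. For the analogous statement on a general annulus with $n_1+n_2$ marked points one can instead invoke \cite{fst}.
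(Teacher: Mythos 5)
Your overall strategy --- produce a Type~1 diagonal via Lemma~\ref{atleastonediagonal}, cut the annulus along it, and reduce to the known count for a disc --- is exactly the paper's proof; your extra care about identifying the cut polygon and transporting the remaining diagonals is detail the paper delegates to Figure~\ref{figcutting}.

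The step that does not hold together is the face count. The statement concerns a \emph{triangulation}, which the paper defines as a maximal set of pairwise non-crossing diagonals; maximality forces every complementary region to be a triangle, so your Euler computation with $(m+2)F = 2D' + V$ (and likewise your annulus version) is counting the diagonals of an $(m+2)$-angulation --- that is the content of the separate Proposition~\ref{numberofdiagonals2}, not of this one. Your intermediate claim that ``maximality of $\Delta$ makes the image a maximal non-crossing set in $P'$, that is, an $(m+2)$-angulation'' is false for $m>1$: a maximal non-crossing set in a polygon is a triangulation. Running the correct triangle count on your disc $P'$ with $m(p+q)+2$ vertices gives $m(p+q)-1$ chords, hence $m(p+q)$ diagonals in total, which equals the claimed $p+q$ only when $m=1$. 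The paper's own proof implicitly takes $m=1$: it asserts the cut polygon has $p+q+2$ vertices and quotes the triangulation count $p+q-1$ from \cite{fst,ccs}. So either restrict to $m=1$ (where your argument, with triangles in place of $(m+2)$-gons, is correct and coincides with the paper's) or state explicitly how you are reading the marked points of $P_{p,q,m}$ for $m>1$; as written, the derivation of the number $p+q$ rests on silently exchanging ``maximal non-crossing set'' for ``$(m+2)$-angulation''.
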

\begin{proof}
Let $\Delta$ be a triangulation, and let $\alpha$ be a diagonal of
Type 1, which exist by Lemma \ref{atleastonediagonal}, say from $O_i$
to $I_j$. See Figure \ref{figcutting}, where we cut the polygon along
$\alpha$ and fold it out.
  \begin{figure}[htp]
  \begin{center}
    \includegraphics[width=12.5cm]{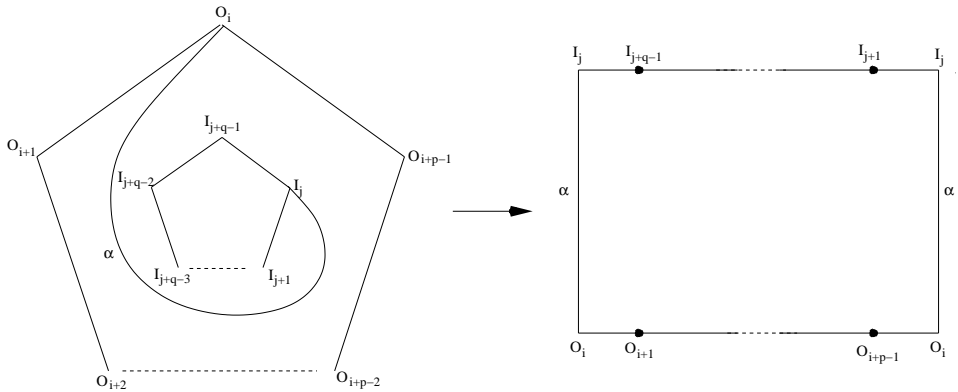}
  \end{center}\caption{\label{figcutting} See proof of Lemma
    \ref{numberofdiagonals}.} 
  \end{figure}
We obtain a polygon with $p+q+2$ vertices, and such a polygon can be
triangulated by $p+q-1$ diagonals by \cite{fst,ccs}. Then $\Delta$ has
$p+q$ diagonals, counting the diagonal $\alpha$.
\end{proof}

An $m$-diagonal in $P_{p,q,m}$ is a diagonal of the types above, but
with the following restrictions:

\begin{itemize}
\item The $m$-diagonals of Type 2 (Type 3) are of the form
  $O_{i,km+2}$ ($I_{i,km+2}$), where $k \geq 1$, for all $i$.
\item If $\alpha$ is an $m$-diagonal of Type 1 between $O_i$ and
  $I_j$,  then $i$ is congruent to $j$ modulo $m$.
\end{itemize}

We say that a set of $m$-diagonals cross if they intersect in the
interior $P^0_{p,q,m}$ (i.e. their crossing number as diagonals is not
$0$). A set of non-crossing $m$-diagonals that divides $P_{p,q,m}$
into $(m+2)$-gons is called an $(m+2)$-angulation. When $m = 1$ this
is a triangulation as described above.

We also have the following. 

\begin{prop}\label{numberofdiagonals2}
Any $(m+2)$-angulation of $P_{p,q,m}$ consists of exactly $p+q$
$m$-diagonals, and there exist at least one $m$-diagonal of Type 1.  
\end{prop}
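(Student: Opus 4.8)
The plan is to reduce the $(m+2)$-angulation statement to the triangulation case already established in Proposition~\ref{numberofdiagonals}. The key observation is that the case $m=1$ is exactly that proposition, so the work lies in leveraging the $m=1$ result, or in mimicking its proof. Let me think about what the proposition is really asking: that any maximal set of non-crossing $m$-diagonals dividing $P_{p,q,m}$ into $(m+2)$-gons has exactly $p+q$ elements, and that at least one is of Type 1.

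First I would establish the existence of a Type~1 $m$-diagonal, arguing exactly as in Lemma~\ref{atleastonediagonal}: if no $m$-diagonal crosses the annular region $P^0_{p,q,m}$ connecting the two boundary components, then the inner polygon is never separated from the outer one, so the region between them cannot be cut into bounded $(m+2)$-gons — there would always be an annular cell left, contradicting that the set is an $(m+2)$-angulation. This is the same topological obstruction used in the triangulation case, and it does not depend on the congruence restrictions on $m$-diagonals.

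For the count, the natural approach is the cutting argument from Proposition~\ref{numberofdiagonals}. Having fixed a Type~1 $m$-diagonal $\alpha$ from $O_i$ to $I_j$ (with $i\equiv j \bmod m$), I would cut $P_{p,q,m}$ along $\alpha$ and unfold it into a disc. The crucial point is to track how the boundary vertices become a polygon: the outer boundary contributes $mp$ vertices and the inner contributes $mq$, and cutting along $\alpha$ doubles its two endpoints, producing a polygon whose total vertex count is $mp+mq+2 = m(p+q)+2$. This is precisely a $P_{nm+2}$ with $n=p+q$, the setting of Baur–Marsh's $(m+2)$-angulations of a polygon. I would then invoke the known fact (cited via \cite{bm2}) that an $(m+2)$-angulation of $P_{(p+q)m+2}$ consists of exactly $(p+q)-1$ $m$-diagonals. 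Adding back $\alpha$ gives $p+q$.

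The main obstacle — and the step requiring the most care — is verifying that the cut-and-unfold operation gives a genuine bijection between $(m+2)$-angulations of the annulus containing $\alpha$ and $(m+2)$-angulations of the polygon $P_{m(p+q)+2}$, respecting the $m$-diagonal conditions. One must check that an annular $m$-diagonal of Type~2 or Type~3 (dividing off a region with vertex count $\equiv 2 \bmod m$) maps to a legitimate polygon $m$-diagonal and conversely, and that the congruence condition $i\equiv j \bmod m$ on Type~1 diagonals is exactly what is needed so that each $(m+2)$-gon cell genuinely has $m+2$ sides after unfolding. In particular I expect the congruence restrictions in the definition of $m$-diagonal to be tailored precisely so that this correspondence is clean, so the verification should reduce to bookkeeping of vertex counts modulo $m$ rather than any deeper difficulty.
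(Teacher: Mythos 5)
Your argument is correct and is essentially the paper's own approach: the paper states this proposition without proof, but it is exactly the generalization of the proof of Proposition~\ref{numberofdiagonals} (existence of a Type~1 diagonal via the annular-cell obstruction of Lemma~\ref{atleastonediagonal}, then cutting along it to get a disc), and your vertex count $mp+mq+2=m(p+q)+2$ with the Baur--Marsh count of $(p+q)-1$ diagonals is precisely the computation the paper itself performs later in Lemma~\ref{factoringtype1}. No gaps.
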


See examples of $(m+2)$-angulations in Figure \ref{figexang}.

  \begin{figure}[htp]
  \begin{center}
    \includegraphics[width=6cm]{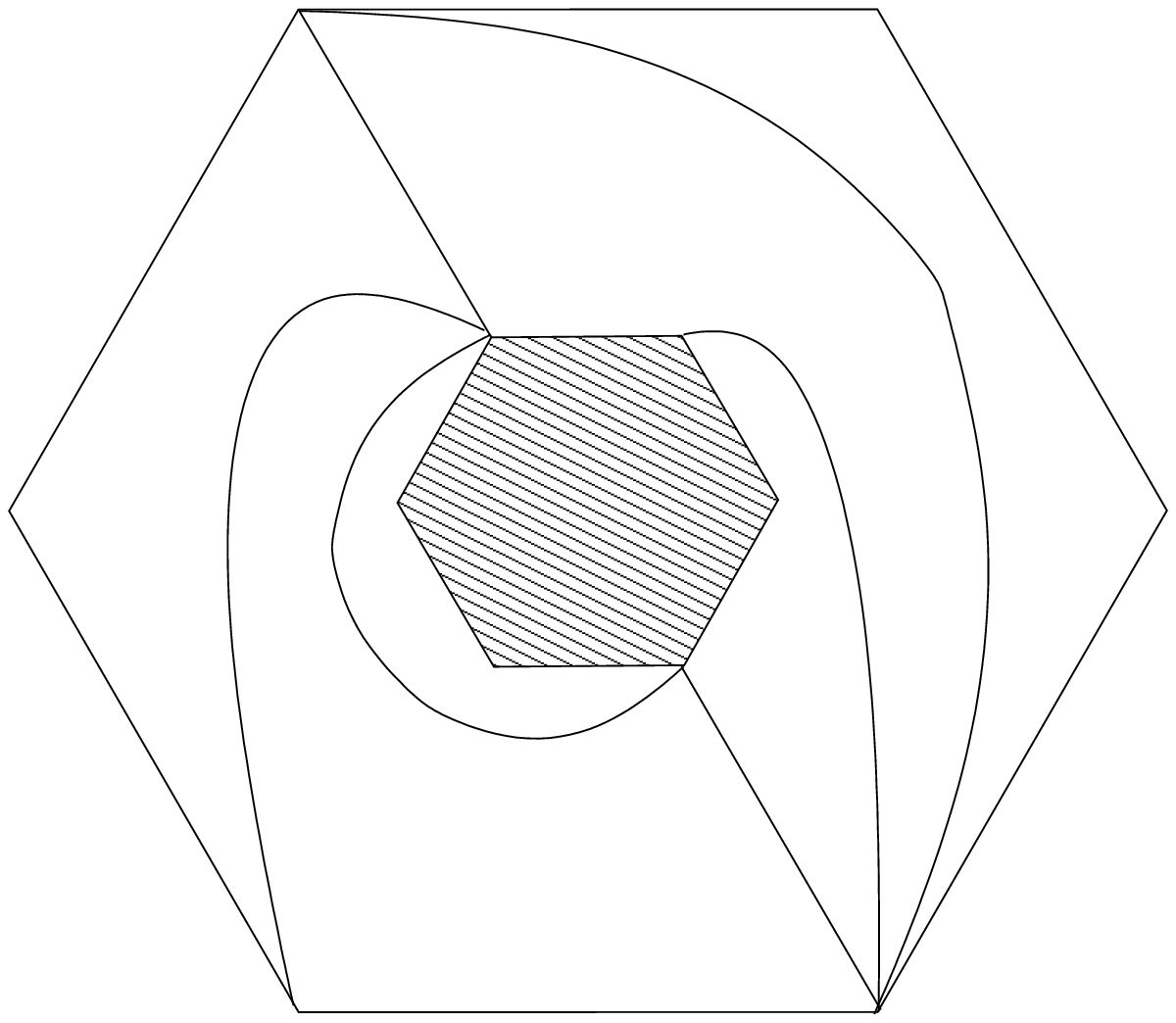}
    \includegraphics[width=5.3cm]{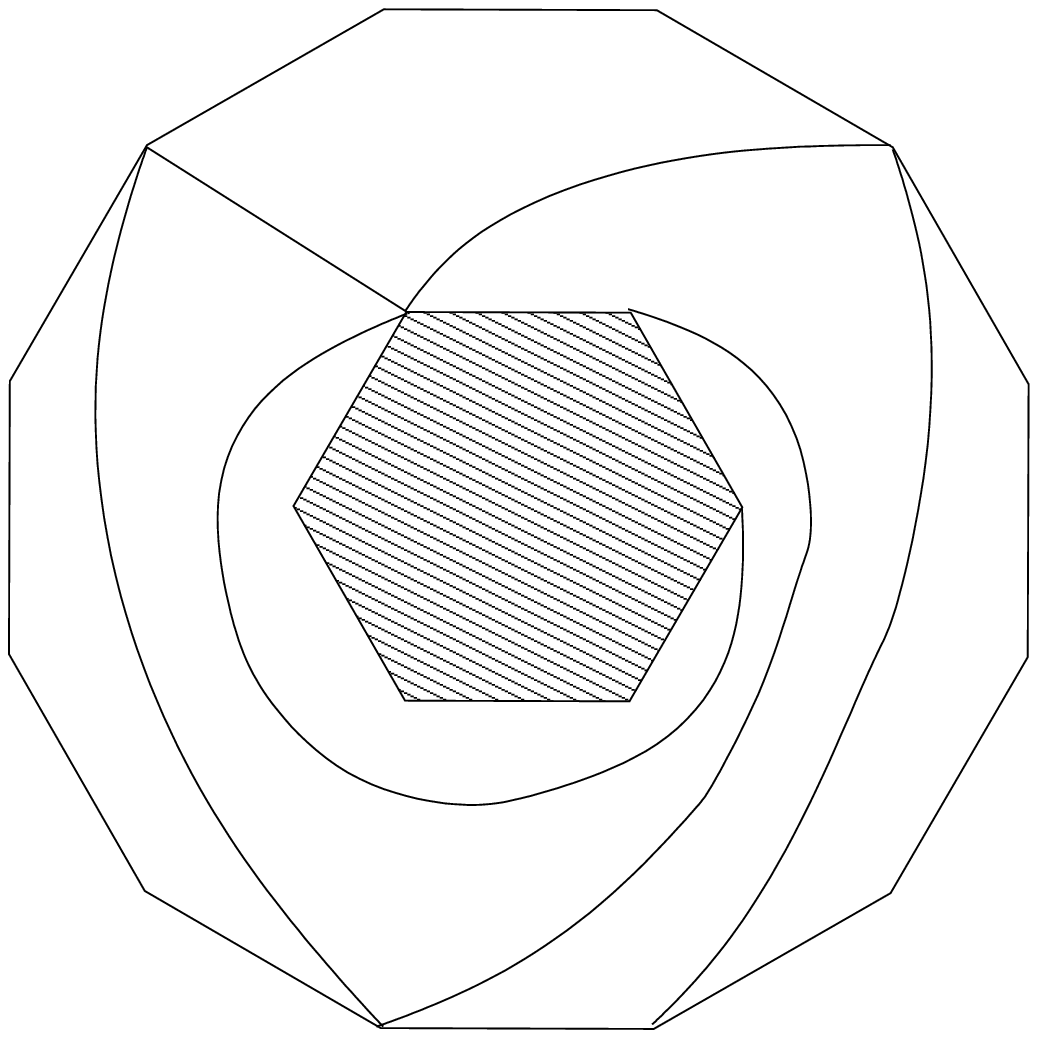}
  \end{center}\caption{\label{figexang} $4$-angulation of $P_{3,3,2}$
    and $5$-angulation of $P_{4,2,3}$.} 
  \end{figure}

Let $\alpha$ be a diagonal, and let $x$ be the point at the center of the
inner (and hence outer) polygon. The winding number of $\alpha$ is an
integer denoting how many times $\alpha$ travels around $x$. If
$\alpha$ travels around $s$ times but not $s+1$ times, we say that the
winding number is $s$. We do not care about orientation, so the
winding number is always $\geq 0$.

\section{The quiver corresponding to an $(m+2)$-angulation}\label{quiver}

For an $(m+2)$-angulation $\Delta$ of $P_{p,q,m}$, we define a
corresponding coloured quiver $Q_{\Delta}$ with $p+q$ vertices in the
following way. The vertices are the $m$-diagonals. There
is an arrow between $i$ and $j$ if the $m$-diagonals bound a common
$(m+2)$-gon. The colour of the arrow is the number of edges forming
the segment of the boundary of the $(m+2)$-gon which lies between $i$
and $j$, counterclockwise from $i$. This is the same definition as in
\cite{bt} in the Dynkin $A$ case, and it is easy to see that such a
quiver satisfy the conditions described in \cite{bt} for coloured
quivers. See Figure \ref{figannulusquiver} for an example. If $\alpha$
is an $m$-diagonal in an $(m+2)$-angulation, we always denote by
$v_{\alpha}$ the corresponding vertex in $Q_{\Delta}$. 

  \begin{figure}[htp]
  \begin{center}
    \includegraphics[width=4.6cm]{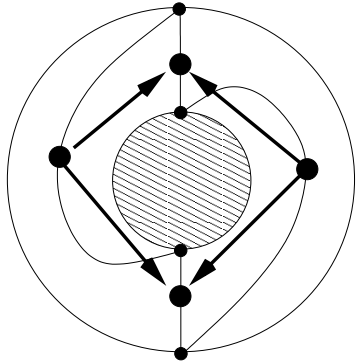}
  \end{center}\caption{\label{figannulusquiver} Example of a
    triangulation $\Delta$ of $P_{2,2}$ and its corresponding quiver
    $Q_{\Delta}$.} 
  \end{figure}

It is known from \cite{fst} that a quiver obtained in this way, for
$m=1$, is a quiver of a cluster-tilted algebra of type
$\widetilde{A}_{p,q}$. We also know that all quivers of cluster-tilted
algebras of type $\widetilde{A}_{p,q}$ can be obtained this way.

If $\alpha$ is a diagonal in a triangulation $\Delta$, the mutation of
$\Delta$ at $\alpha$ is the triangulation $\Delta'$ obtained by
replacing $\alpha$ with the unique other diagonal $\alpha'$ such that
$\Delta' = (\Delta - \alpha) \cup \alpha'$ is a triangulation. See
Figure \ref{figmutationexample}. It is known that this operation
commutes with quiver mutation, i.e. mutating at $\alpha$ corresponds
to mutating at $v_{\alpha}$.

  \begin{figure}[htp]
  \begin{center}
    \includegraphics[width=9cm]{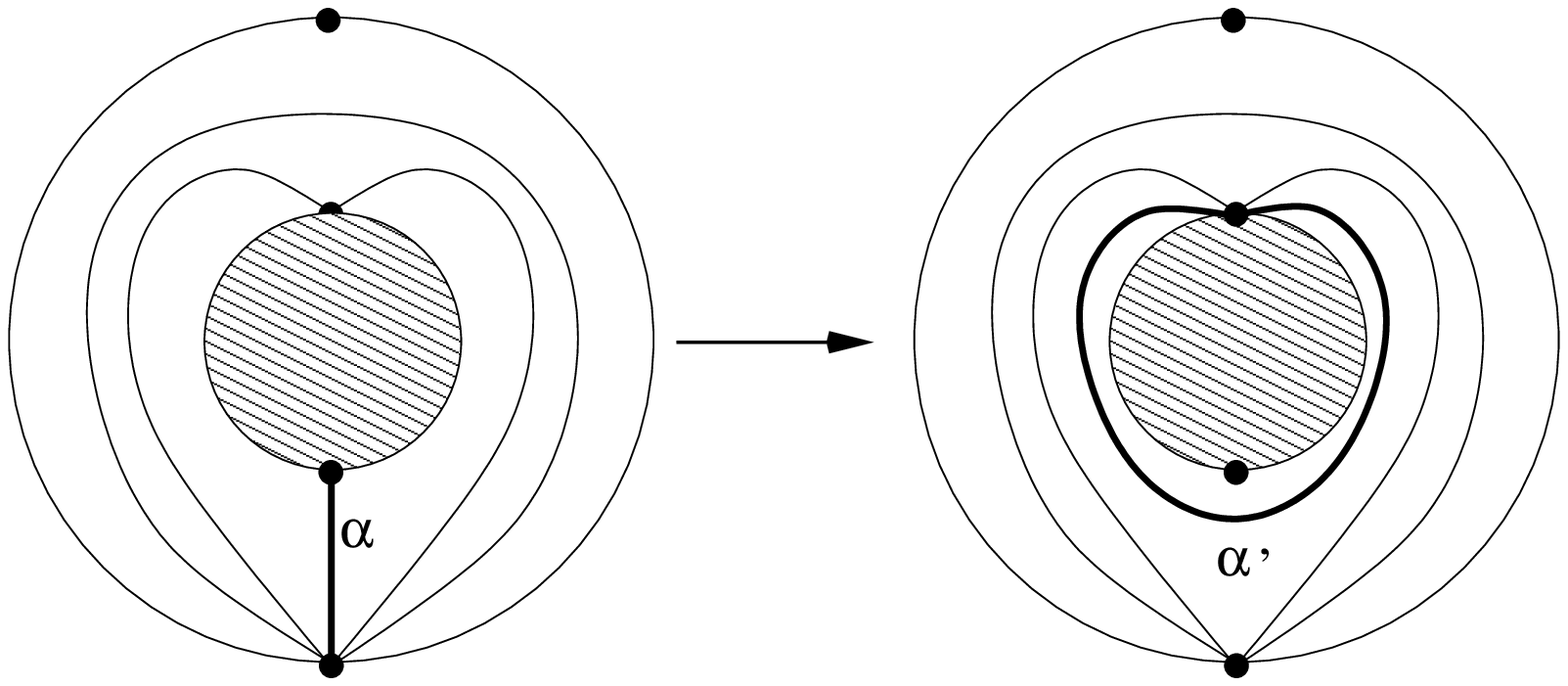}
$$\xymatrix{
&\bullet \ar[dr]&&&&\bullet \ar[dr]\ar[dl]&\\
v_{\alpha} \ar[ur]&&\bullet \ar[dl]&\longrightarrow&v_{\alpha'} \ar[dr]&&\bullet \ar[dl]\\
&\bullet \ar[ul]&&&&\bullet \ar[uu]&
}$$
  \end{center}\caption{\label{figmutationexample} Example of mutation
    of a triangulation at a diagonal and the corresponding quiver.} 
  \end{figure}

Let $\Delta$ be any $(m+2)$-angulation of $P_{p,q,m}$, and let $\alpha
\in \Delta$. By removing the $m$-diagonal $\alpha$ from $\Delta$, we
obtain an inner $(2m+2)$-gon in the "almost complete"
$(m+2)$-angulation $\Delta - \alpha$. There are exactly $m+1$ possible 
$m$-diagonals, say $\alpha=\alpha_0, \alpha_1, \alpha_2, ...,
\alpha_m$, such that $(\Delta - \alpha) \cup \alpha_i$ is an
$(m+2)$-angulation. These possible $m$-diagonals are called diameters
of the inner $(2m+2)$-gon, because they geometrically connect two
opposite vertices in the $(2m+2)$-gon. Consequently, they can all be
obtained from $\alpha$ by rotating the inner $(2m+2)$-gon. For an
$m+2$-angulation $\Delta$, we define the mutation at $\alpha$ to be
the $(m+2)$-angulation $\mu_{\alpha}(\Delta)$ obtained by rotating the
$(2m+2)$-gon corresponding to $\alpha$ clockwise. This definition is
similar to the same as the one given in \cite{bt} for the Dynkin case
$A$ and $(m+2)$-angulations of regular polygons. 

We note that this operation is well-defined, for if $\alpha$ is an
$m$-diagonal between $O_i$ and $I_j$, then $i$ and $j$ are congruent
modulo $m$, and of course $i-1$ is congruent to $j-1$ modulo $m$. It
is straightforward to check that the operation is well-defined on
$m$-diagonals of Type 2 and 3. Also, it is well-defined for mutation
that takes an $m$-diagonal of a certain type to an $m$-diagonal of
another type. 

Let us fix a particular $(m+2)$-angulation $\Delta_{p,q,m}^0$ of
$P_{p,q,m}$ and the corresponding quiver $Q_{\Delta_{p,q,m}^0}$. See
Figure \ref{figparticulartriquiv}. 
  \begin{figure}[htp]
  \begin{center}
    \includegraphics[width=10cm]{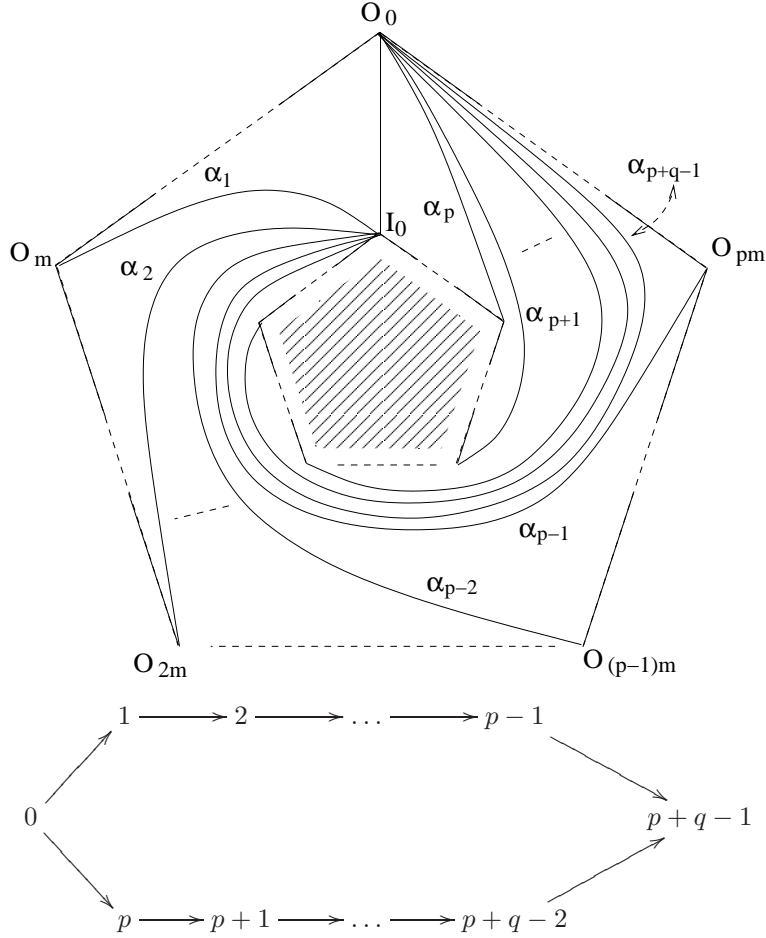}
$$\xymatrix{
&1\ar[r]&2\ar[r]&\hdots\ar[r]&p-1\ar[dr]\\
0\ar[ur]\ar[dr]&&&&&p+q-1\\
&p\ar[r]&p+1\ar[r]&\hdots\ar[r]&p+q-2\ar[ur]\\
}$$
  \end{center}\caption{\label{figparticulartriquiv} The triangulation
    $\Delta_{p,q,m}^0$ of $P_{p,q,m}$ and the corresponding quiver
    $Q_{\Delta_{p,q,m}^0}$. Note that we have only drawn the arrows of
    colour $0$, and there are only arrows of colour $0$ and $m$. Also
    note that all $m$-diagonals are of Type 1.}  
  \end{figure}
The $(m+2)$-angulation $\Delta^0_{p,q,m}$ gives rise to the coloured
quiver $Q_{\Delta^0_{p,q,m}}$ of type $\tilde{A}_{p,q}$. The proof of
the following proposition is straightforward and similar to the Dynkin
type $A$ case \cite{bt}, and we leave the proof to the reader.

\begin{prop}\label{mutationcommutes}
Mutation at any $m$-diagonal $\alpha$ in an $(m+2)$-angulation
$\Delta$, corresponds to coloured quiver mutation at $v_{\alpha}$ in
$Q_{\Delta}$. 
\end{prop}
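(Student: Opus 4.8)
The plan is to verify the three rules of coloured quiver mutation directly on the geometric picture, matching each rule against a local change in the $(m+2)$-angulation. Let $\alpha$ be the $m$-diagonal we mutate, and recall that removing $\alpha$ from $\Delta$ produces an inner $(2m+2)$-gon having $\alpha$ as a diameter; the mutation $\mu_\alpha(\Delta)$ rotates this $(2m+2)$-gon clockwise, replacing $\alpha=\alpha_0$ by $\alpha_1$. The key observation is that the only arrows of $Q_\Delta$ that change under mutation are those incident to the vertex $v_\alpha$, or those between two vertices that both share an $(m+2)$-gon with $\alpha$. So the proof reduces to a \emph{local} computation around the $(2m+2)$-gon.

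First I would fix notation for the local configuration. The $(2m+2)$-gon has $2m+2$ boundary edges, and each of its $m+2$-gons on the two sides of $\alpha$ contributes boundary $m$-diagonals; label the $m$-diagonals bounding these two $(m+2)$-gons and record, via the colour convention (the colour of an arrow $v_\beta\to v_\gamma$ is the number of boundary edges of the shared $(m+2)$-gon lying counterclockwise from $\beta$ to $\gamma$), exactly which colours the arrows into and out of $v_\alpha$ carry before mutation. Because $\alpha$ is a diameter, the two $(m+2)$-gons adjacent to $\alpha$ are positioned symmetrically, and the arrows $v_\beta\to v_\alpha$ of colour $0$ (i.e.\ $\beta$ immediately counterclockwise-adjacent to $\alpha$ along a shared gon) are precisely the ones that rule (1) of quiver mutation uses to create new arrows. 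I would then check that after rotating $\alpha$ to $\alpha_1$, a vertex $k$ that previously received a colour-$0$ arrow from $\alpha$ and a vertex $i$ that sent a colour-$c$ arrow into $\alpha$ now bound a common $(m+2)$-gon in $\mu_\alpha(\Delta)$, and that the boundary count gives exactly colour $c$ from $i$ to $k$ (and $m-c$ back), matching rule (1).

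Next I would handle rules (3) and (2). For rule (3), the rotation of the $(2m+2)$-gon shifts every $m$-diagonal bounding a gon with $\alpha$ by one position, so an $m$-diagonal $\beta$ that shared a boundary segment of length $c$ with $\alpha$ counterclockwise-from-$\alpha$ now shares a segment of length $c-1$ (arrows out of $v_\alpha$ decrease by one), while one sharing a segment counterclockwise-to-$\alpha$ has its segment lengthen by one (arrows into $v_\alpha$ increase by one): this is exactly the $\pm1$ colour shift at $v_\alpha$ demanded by rule (3). Rule (2), the cancellation of oppositely-coloured parallel arrows, corresponds geometrically to the fact that when the rotation makes two $m$-diagonals coincide as boundaries of the \emph{same} new $(m+2)$-gon from two different old gons, the resulting configuration must again be a genuine $(m+2)$-angulation dividing $P_{p,q,m}$ into $(m+2)$-gons, forcing the bookkeeping of colours to reduce; I would argue this by noting that any pair of arrows of different colours between fixed vertices cannot both survive in a quiver arising from an $(m+2)$-angulation, since the shared boundary between two $m$-diagonals has a single well-defined counterclockwise length.

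The main obstacle I expect is the \emph{type-change} bookkeeping: mutation can take an $m$-diagonal of Type 1 to one of Type 2 or Type 3 (and vice versa), and when it does so the local $(m+2)$-gons around $\alpha$ can involve the inner or outer polygon boundary, so the counterclockwise-length count defining the colour must be computed consistently across the annulus, respecting the congruence condition $i\equiv j \pmod m$ that $m$-diagonals of Type 1 satisfy. I would therefore isolate the type-preserving cases first (where the computation is literally the Dynkin-$A$ computation of \cite{bt} carried out inside a single $(m+2)$-gon), and then treat the finitely many type-changing configurations separately, in each case checking that the well-definedness already recorded in the paragraph before Proposition~\ref{mutationcommutes} guarantees the colour-count agrees with rules (1)--(3). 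Since the paper explicitly flags the computation as ``straightforward and similar to the Dynkin type $A$ case,'' the bulk of the argument is a faithful transcription of that local count, with the annular type-change cases being the only genuinely new verification.
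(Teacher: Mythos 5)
Your plan is exactly the route the paper takes: the paper gives no written proof, stating only that the argument is ``straightforward and similar to the Dynkin type $A$ case'' of Buan--Thomas, i.e.\ precisely the local verification of the three mutation rules inside the $(2m+2)$-gon that you describe, with the annular type-change cases as the only new bookkeeping. Your proposal is a correct fleshing-out of what the paper leaves to the reader, so no further comparison is needed.
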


Any $(m+2)$-angulation can be obtained from $\Delta^0_{p,q,m}$ by a 
finite sequence of mutations. This can be seen by noting that any 
$(m+2)$-angulation consisting of only $m$-diagonals of Type 1 can 
be reached from $\Delta^0_{p,q,m}$ by a finite sequence of mutations, 
and any $(m+2)$-angulation can be mutated into an $(m+2)$-angulation 
with $m$-diagonals of only Type 1. It follows that any coloured 
quiver of type $\tilde{A}_{p,q}$ can be obtained from an 
$(m+2)$-angulation and that a coloured quiver corresponding to an
$(m+2)$-angulation of $P_{p,q,m}$, is a coloured quiver of an
$m$-cluster-tilted algebra of type $\tilde{A}_{p,q}$.

Let $\mathcal{T}_{p,q,m}$ be the set of all $(m+2)$-angulations of
$P_{p,q,m}$, and let $\mathcal{M}_{p,q,m}$ be the mutation class of
$m$-coloured quivers of type $\widetilde{A}_{p,q}$. By the above we
have a surjective function $$\sigma_{p,q,m}: \mathcal{T}_{p,q,m}
\rightarrow \mathcal{M}_{p,q,m},$$ where $\sigma_{p,q,m}(\Delta) =
Q_{\Delta}$. The function $\sigma_{p,q,m}$ commutes with mutation.

\section{The category of $m$-diagonals}\label{categoryofdiagonals}

Let $\alpha$ be an $m$-diagonal in $P_{p,q,m}$ and $s$ a positive
integer. We define $\alpha[s]$ to be the $m$-diagonal obtained by
rotating the outer polygon $s$ steps clockwise and the inner polygon
$s$ step counterclockwise. More precisely,
\begin{itemize}
\item $\alpha[s]$, where $\alpha$ is a path from $O_i$ to $I_j$ of
  Type 1, is obtained by continuously moving the endpoint of the path
  at $O_i$ to $O_{i-s}$ and the endpoint of $I_j$ to $I_{j-s}$; 
\item $O_{i,k}[s] = O_{i-s,k}$;
\item $I_{i,k}[s] = I_{i-s,k}$. 
\end{itemize}

We always compute modulo $mp$ and $mq$ when we refer to vertices on
the outer and inner polygon respectively. Obviously we can define the
opposite operation, and we denote it by $[-s]$. Certainly this
operation is well-defined, for if $\alpha$ is an $m$-diagonal, then
$\alpha[s]$ is also an $m$-diagonal for all integers $s$. Set $\tau = 
[m]$. We have the following lemma, which follows directly from the
definition.

\begin{lem}\label{lemmatau}
We have that $O_{i,k}[mp]=\tau^p O_{i,k} = O_{i,k}$ and
$I_{i,k}[mq]=\tau^q I_{i,k} = I_{i,k}$. Furthermore, $\alpha[s] \neq
\alpha$ for all $s$, when $\alpha$ is of Type 1.   
\end{lem}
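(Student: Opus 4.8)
The first assertion is essentially bookkeeping. The operation $[\,\cdot\,]$ is additive, $\alpha[s][t]=\alpha[s+t]$, directly from the defining rules $O_{i,k}[s]=O_{i-s,k}$ and $I_{i,k}[s]=I_{i-s,k}$; hence $\tau^{p}=[m]^{p}=[mp]$ and $\tau^{q}=[mq]$. Since vertices on the outer polygon are read modulo $mp$, we get $\tau^{p}O_{i,k}=O_{i,k}[mp]=O_{i-mp,k}=O_{i,k}$, and symmetrically $\tau^{q}I_{i,k}=I_{i,k}[mq]=I_{i-mq,k}=I_{i,k}$ using that inner vertices are read modulo $mq$. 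This disposes of everything except the last sentence, which is the real content of the lemma.

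For the Type 1 claim, the plan is to attach to each Type 1 diagonal a homotopy invariant that detects winding around the central point $x$, and to show that $[s]$ moves this invariant by a nonzero amount whenever $s\neq 0$. Concretely, I would work in the universal cover of the open annulus $P_{p,q,m}^{0}$, whose fundamental group is $\mathbb{Z}$, generated by a loop around $x$. Lifting a Type 1 path $\alpha$ from $O_{i}$ to $I_{j}$ and recording the total signed angular displacement $X(\alpha)$ swept from the outer endpoint to the inner endpoint gives a number that is a complete invariant of the homotopy class rel endpoints: two paths with the same endpoints are homotopic if and only if their values of $X$ agree, since distinct classes differ exactly by integer multiples of $2\pi$.

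Now $[s]$ is realized by a continuous isotopy that rotates the outer endpoint clockwise through total angle $2\pi s/(mp)$ and the inner endpoint counterclockwise through total angle $2\pi s/(mq)$, dragging the path along. Tracking both endpoints in the cover, the total angular displacement changes by $X(\alpha[s])-X(\alpha)=2\pi s\bigl(\tfrac{1}{mp}+\tfrac{1}{mq}\bigr)=\dfrac{2\pi s(p+q)}{mpq}$, which is nonzero for every $s\neq 0$. I would then argue in two cases. If the labels of the endpoints of $\alpha[s]$ and $\alpha$ do not agree (which happens unless $mp\mid s$ and $mq\mid s$), then $\alpha[s]\neq\alpha$ trivially. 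Otherwise the endpoints coincide and the two paths are candidates for the same diagonal, but their $X$-values differ by the nonzero quantity above, so they lie in different homotopy classes and $\alpha[s]\neq\alpha$ again.

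The step I expect to require the most care is this second case: verifying that $X$ really is a well-defined homotopy invariant insensitive to the choice of lift, that the isotopy defining $[s]$ changes it by exactly the stated amount (in particular getting the two signs right, since the outer and inner endpoints rotate in opposite senses in the plane), and that when the endpoints do coincide the $X$-difference is a genuinely nonzero integer multiple of $2\pi$ rather than something removable by a homotopy with fixed endpoints. The first assertion and the trivial subcase are immediate; the winding computation is where the annular topology — the presence of the hole cut by the inner polygon — does the work.
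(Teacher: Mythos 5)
Your proof is correct. The paper offers no proof at all for this lemma (it is stated as following ``directly from the definition''), so your write-up supplies the justification the author leaves implicit: the first assertion is indeed immediate from $O_{i,k}[s]=O_{i-s,k}$ read modulo $mp$ (resp.\ $mq$), and for the Type~1 claim your winding-number argument is exactly the right tool --- the paper itself emphasizes that $\alpha[s]$ is defined by \emph{continuously} moving the endpoints and that Type~1 diagonals carry a winding number, so the observation that $[s]$ shifts the total angular displacement by $2\pi s\bigl(\tfrac{1}{mp}+\tfrac{1}{mq}\bigr)$, a nonzero multiple of $2\pi$ precisely when the endpoint labels return to themselves, is the intended content. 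Your case split (endpoints differ, versus endpoints agree but homotopy classes differ) is complete and the sign bookkeeping is right, since the two rotations are in opposite angular senses and therefore add.
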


If $\Delta$ is an $(m+2)$-angulation, it is clear that if $\Delta[s]$
is the $(m+2)$-angulation obtained from $\Delta$ by applying $[s]$ on
each diagonal in $\Delta$, we obtain a new $(m+2)$-angulation. It is
also clear that $Q_{\Delta} = Q_{\Delta[s]}$ for all $s$, since
$m$-diagonals bounding a common $(m+2)$-gon in $\Delta$ also bound a
common $(m+2)$-gon in $\Delta[s]$. The function $\sigma_{p,q,m}:
\mathcal{T}_{p,q,m} \rightarrow \mathcal{M}_{p,q,m}$ from the previous
section is therefore not an injection.

We want to define a category of $m$-diagonals, and the construction is
motivated by \cite{ccs}, where they defined the cluster category of
type $A_n$ using diagonals of regular polygons. This construction was
generalized in \cite{bm2} to $m$-cluster categories.

First we define elementary moves of $m$-diagonals, which are certain
operations that send one $m$-diagonal to another $m$-diagonal, and it
is easy to check that the operation is well-defined. The operation
should be considered as continuously moving the endpoints of the
$m$-diagonals. We consider several cases.   

\begin{itemize}
\item $m$-diagonals of Type 1: If $\alpha$ is an $m$-diagonal between $O_i$ and
  $I_j$, there are exactly two elementary moves:
  $$\xymatrix@R=0.3pc{
    &\beta\\
    \alpha\ar[ur]\ar[dr]&\\
    &\epsilon
  }$$
  The $m$-diagonal $\beta$ is the $m$-diagonal obtained from $\alpha$
  by continuously moving the endpoint of $\alpha$ at $O_i$
  counterclockwise $m$ steps to $O_{i+m}$. The $m$-diagonal $\epsilon$
  is the $m$-diagonal obtained from $\alpha$ by continuously moving
  the endpoint of $\alpha$ at $I_j$ clockwise $m$ steps to $I_{j+m}$. 
\item $m$-diagonals of Type 2: 
  \begin{itemize}
    \item If $k=m+2$, there is exactly one elementary move, 
      $$O_{i,k} \rightarrow O_{i,k+m}.$$
    \item If $k > m+2$, there are exactly two elementary moves:
    $$\xymatrix@R=0.3pc{
       &O_{i,k+m}\\
       O_{i,k}\ar[ur]\ar[dr]&\\
       &O_{i+m,k-m}
    }$$
  \end{itemize}
\item $m$-diagonals of Type 3:
  \begin{itemize}
    \item If $k=m+2$, there is exactly one elementary move, 
      $$I_{i,k} \rightarrow I_{i,k+m}.$$
    \item If $k > m+2$, there are exactly two elementary moves:
    $$\xymatrix@R=0.3pc{
       &I_{i,k+m}\\
       I_{i,k}\ar[ur]\ar[dr]&\\
       &I_{i+m,k-m}
    }$$
  \end{itemize}
\end{itemize}

See Figure \ref{figelementarymoves} for several examples of elementary
moves. 

  \begin{figure}[htp]
  \begin{center}
    \includegraphics[width=5cm]{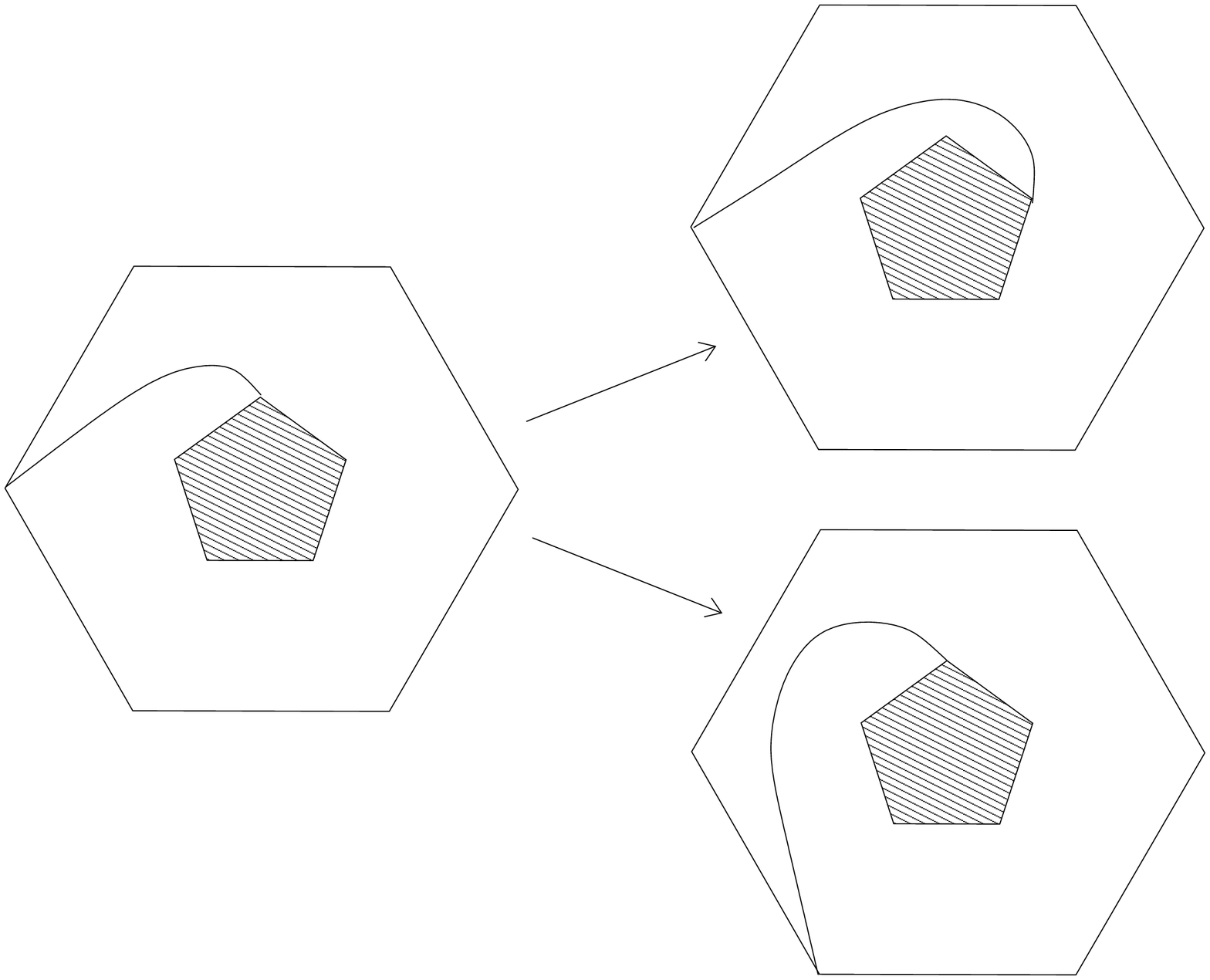}
    \includegraphics[width=5cm]{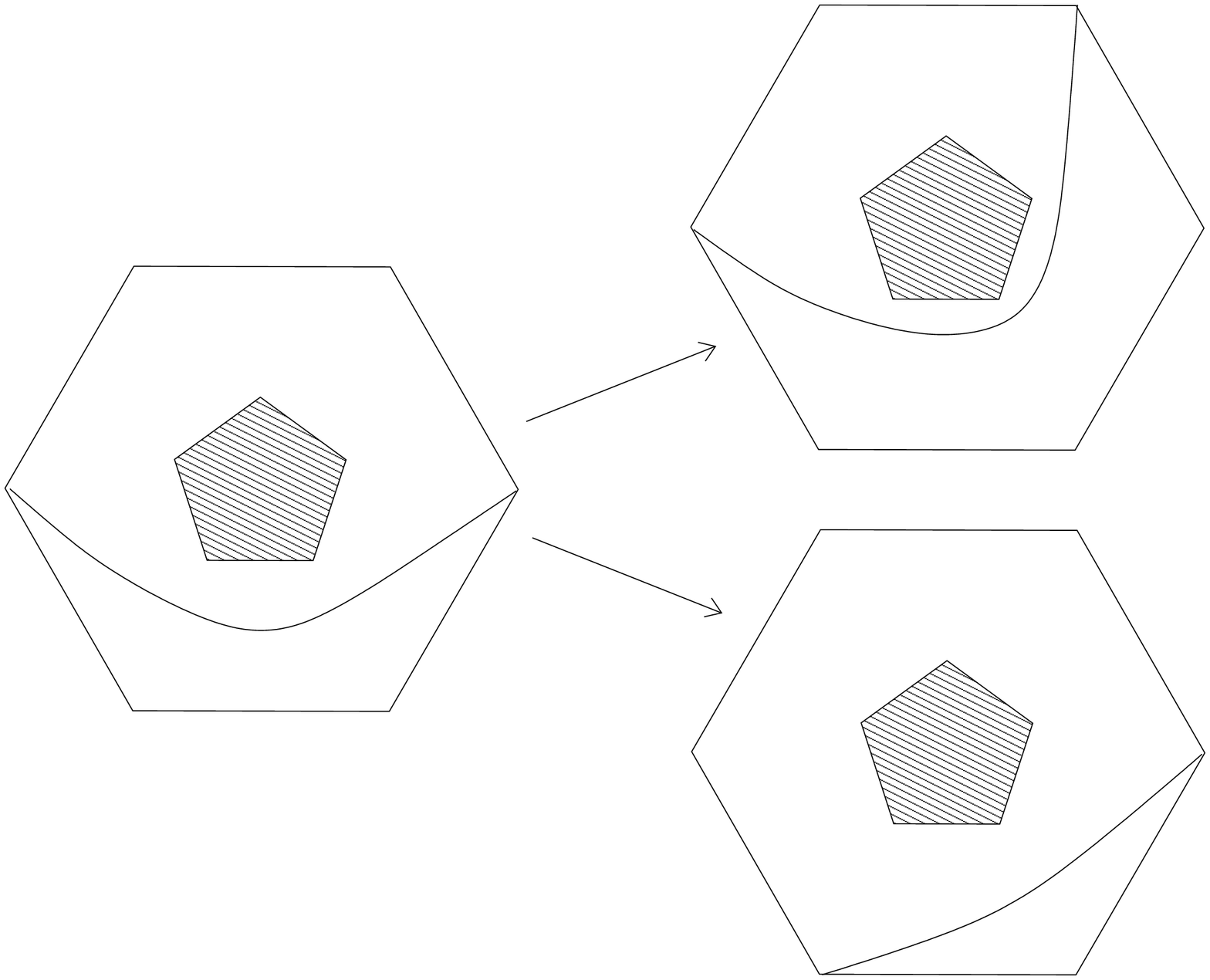}
    \includegraphics[width=5cm]{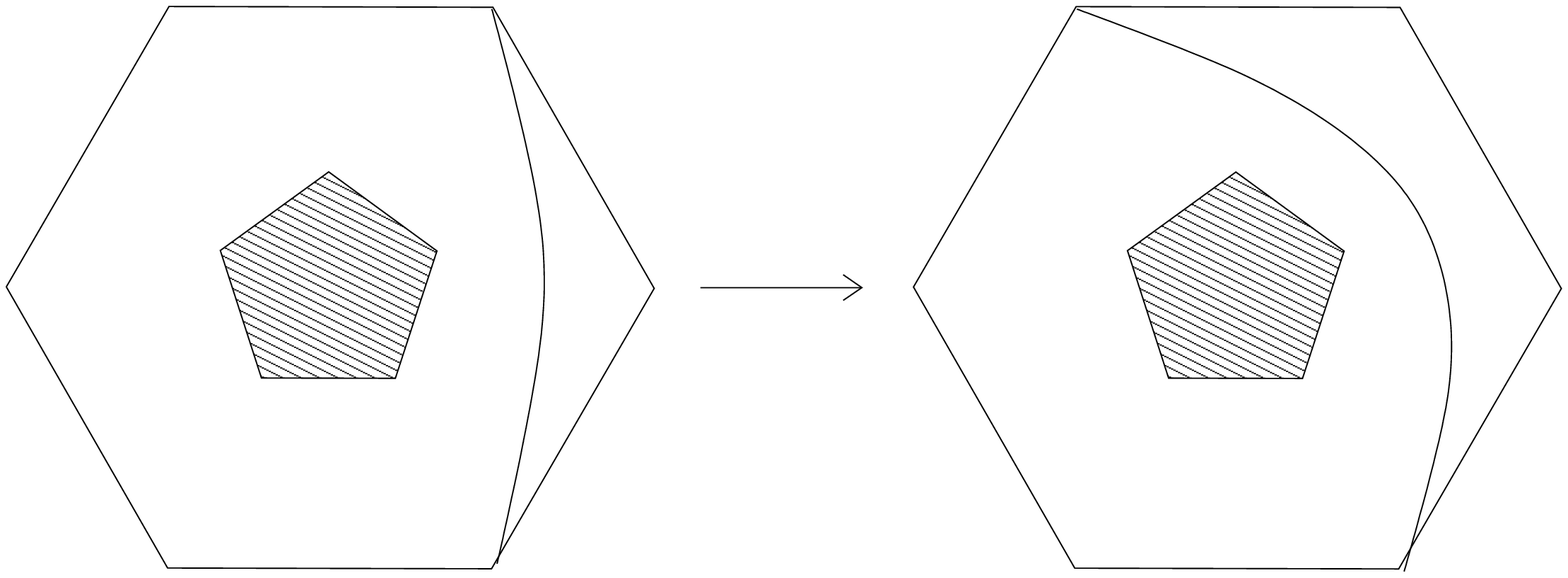}
    \includegraphics[width=5cm]{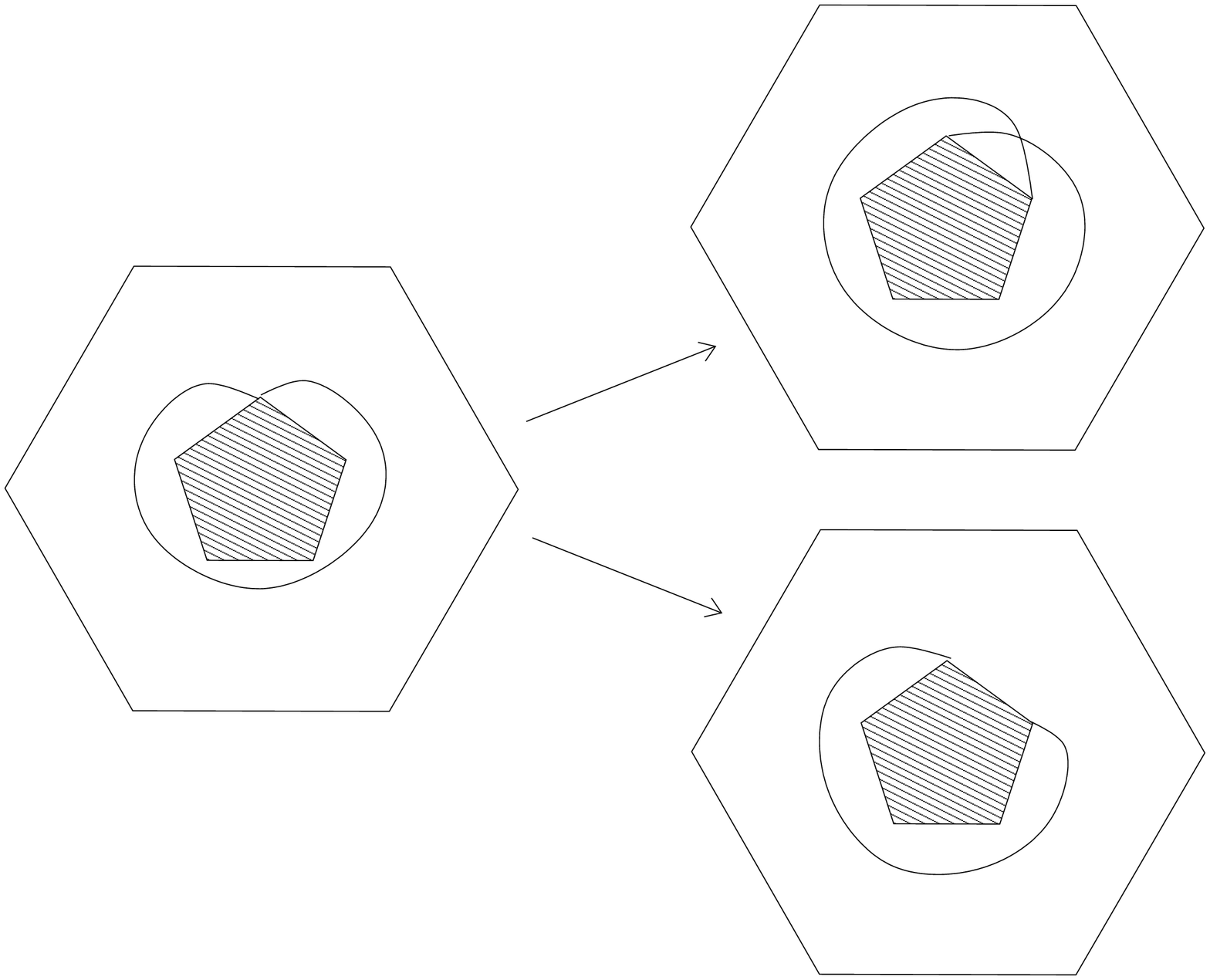}
  \end{center}\caption{\label{figelementarymoves} Examples of
    elementary moves for $m=1$.} 
  \end{figure}

We can think of elementary moves of $m$-diagonals of Type 1 as rotating
the outer and inner polygon $m$ steps counterclockwise and clockwise
respectively. Note that rotating only the outer polygon gives rise to
another $(m+2)$-angulation that preserves the corresponding coloured
quiver. Similarly for the inner polygon. 

We need the following proposition.

\begin{prop}
Let $\alpha$ and $\beta$ be $m$-diagonals. Then there is an elementary 
move $\alpha \rightarrow \beta$ if and only if there is an elementary
move $\tau \beta \rightarrow \alpha$.
\end{prop}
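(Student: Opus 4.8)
The plan is to prove the equivalence by a direct case analysis, exploiting the explicit combinatorial description of the elementary moves together with the formula $\tau = [m]$ and the identity $O_{i,k}[s]=O_{i-s,k}$, $I_{i,k}[s]=I_{i-s,k}$ from the definitions. Since $\tau=[m]$ acts on each type of $m$-diagonal by a clean index shift, the strategy is to take each of the (finitely many) shapes of elementary move listed above, compute the effect of applying $\tau$ to the target $\beta$, and check that the reversed move $\tau\beta\to\alpha$ is again one of the listed elementary moves. The logical structure is an ``if and only if'', but because applying $\tau$ is invertible (its inverse is $[-m]=\tau^{-1}$) and the list of elementary moves is symmetric under the operation, it suffices to verify one direction in each case; the converse follows by running the same computation with $\alpha$ and $\beta$ interchanged and $\tau$ replaced by $\tau^{-1}$.

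Concretely, first I would handle the Type 1 moves. If $\alpha$ is a Type 1 diagonal from $O_i$ to $I_j$, the two elementary moves produce $\beta$ (endpoint moved to $O_{i+m}$, so from $O_{i+m}$ to $I_j$) and $\epsilon$ (endpoint moved to $I_{j+m}$, so from $O_i$ to $I_{j+m}$). Applying $\tau=[m]$ to $\beta$ shifts both endpoints: $\tau\beta$ runs from $O_{(i+m)-m}=O_i$ to $I_{j-m}$. I then need to exhibit an elementary move $\tau\beta\to\alpha$, i.e.\ from the diagonal $O_i\to I_{j-m}$ to the diagonal $O_i\to I_j$; this is precisely the second type of Type 1 elementary move (moving the inner endpoint $I_{j-m}$ forward $m$ steps clockwise to $I_j$). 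The $\epsilon$ case is symmetric, swapping the roles of the inner and outer polygon. This is the conceptual heart of the statement: the move that advances the outer endpoint becomes, after applying $\tau$, the reverse-direction move advancing the inner endpoint, and vice versa.

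Next I would treat Types 2 and 3, which are formally identical so I would write out Type 2 and remark the Type 3 case is verbatim with $O$ replaced by $I$ and $p$ by $q$. For the move $O_{i,k}\to O_{i,k+m}$ (valid for $k\ge m+2$), applying $\tau$ gives $\tau O_{i,k+m}=O_{i-m,k+m}$, and one checks that $O_{i-m,k+m}\to O_{(i-m)+m,(k+m)-m}=O_{i,k}$ is exactly the ``second'' elementary move of Type 2 (the one sending $O_{a,\ell}\to O_{a+m,\ell-m}$, here with $a=i-m$, $\ell=k+m$), so the reversed move $\tau\beta\to\alpha$ exists. For the other Type 2 move $O_{i,k}\to O_{i+m,k-m}$ (with $k>m+2$), applying $\tau$ yields $O_{i,k-m}$, and $O_{i,k-m}\to O_{i,(k-m)+m}=O_{i,k}$ is the first elementary move, provided one checks the length condition; the boundary cases where $k=m+2$ or $k-m=m+2$ must be matched against the lemma's single-move clause and verified to be consistent.

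The main obstacle I anticipate is bookkeeping at these degenerate endpoints of the length parameter $k$ (the switch between ``one elementary move'' and ``two elementary moves'' at $k=m+2$) and ensuring the arithmetic is done correctly modulo $mp$ and $mq$, so that the shifted diagonals are genuinely $m$-diagonals (the congruence $i\equiv j \pmod m$ for Type 1 is preserved since $\tau$ shifts both indices by $m$). I would close by noting that because every case produced a valid elementary move and the whole scheme is symmetric under replacing $\tau$ by $\tau^{-1}=[-m]$ and interchanging source and target, both implications hold, completing the proof.
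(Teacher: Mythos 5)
Your proposal is correct and follows essentially the same route as the paper: a case-by-case verification over the types of elementary moves, applying $\tau=[m]$ to the target and checking via the explicit index formulas that the reversed move is again on the list. Your extra attention to the boundary cases at $k=m+2$ and to deducing the converse from the invertibility of $\tau$ only makes explicit what the paper leaves to the reader.
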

\begin{proof}
Suppose $f:\alpha \rightarrow \beta$ is an elementary move. 

First, assume that $\alpha$ and $\beta$ are of Type 1, say $\alpha$
is an $m$-diagonal between $O_i$ and $I_j$ and $\beta$ is an
$m$-diagonal between $O_{i'}$ and $I_{j'}$. Then either $i'=i+m$ and
$j'=j$ or $i'=i$ and $j'=j+m$. If $i'=i+m$ and $j'=j$, then $\tau
\beta$ is an $m$-diagonal between $O_i$ and $I_{j-m}$. Then, by
definition, there is an elementary move $\tau \beta \rightarrow
\alpha$. Similarly when $i'=i$ and $j'=j+m$. 

Next, suppose $\alpha$ and $\beta$ are of Type 2, say $\alpha =
O_{i,k}$ and $\beta = O_{i',k'}$. If $k=m+2$, we have that $i'=i$ and
$k' = k+m = 2m+2$. Then $\tau \beta = \tau O_{i',k'} = \tau O_{i,
  2m+2} = O_{i-m,2m+2}$, and by definition, there is an elementary
move $\tau \beta = O_{i-m,2m+2} \rightarrow
\alpha=O_{i,2m+2-m}=O_{i,m+2} =O_{i,k}$. 

If $k \geq m+2$, then either $i'=i$ and $k'=k+m$ or $i'=i+m$ and
$k'=k-m$. If $i'=i$ and $k'=k+m$, we have $\tau \beta = \tau O_{i',k'}
= \tau O_{i,k+m} = O_{i-m,k+m}$. Then, by definition, there is an
elementary move $\tau \beta = O_{i-m,k+m} \rightarrow \alpha =
O_{i,k}$. Similarly if $i'=i+1$ and $k'=k-1$.  

In the same way we can show that this holds for $m$-diagonals of Type
3. The converse is similar. 
\end{proof}

Let $K$ be an algebraically closed field, and let $C^m_{p,q}$
be the $K$-linear additive category defined as follows. The
indecomposable objects are the $m$-diagonals, so the objects in
$C^m_{p,q}$ are direct sums of the $m$-diagonals. Morphisms 
between indecomposable objects $X$ and $Y$ are the vector space over
$K$ spanned by the elementary moves modulo certain mesh relations
which we define below. 

Let $\alpha$ be an indecomposable object (an $m$-diagonal) in
$C^m_{p,q}$. If $f: \beta \rightarrow \alpha$ is an elementary
move, there exist an elementary move $f': \tau \alpha \rightarrow
\beta$ by the proposition. Let $L$ be the set of all elementary moves
ending in $\alpha$. Then the mesh relation is defined as $$\sum_{f \in
  L} f f'.$$

Consider the following situation, where $f_1,f_2,...,f_n$ are all
elementary moves ending in $\alpha$.

\[\xymatrix@R=0.9pc@C=7pc{
&\beta_1\ar[ddr]^{f_1}&\\
&\beta_2\ar[dr]^{f_2}&\\
\tau\alpha\ar[uur]^{f_1'}\ar[ur]^{f_2'}\ar[dr]^{f_t'}&\vdots&\alpha\\
&\beta_t\ar[ur]^{f_t}&\\
}\]

This means that the sum of compositions $f_i f_i'$ is $0$. In our case
there are at most two elementary moves, so $t=1$ or $t=2$. If $t=1$,
the diagonals are of Type 2 ($O_{i,k}$) or Type 3 ($I_{i,k}$) and
$k=m+2$. This means that the compositions $O_{i,m+2} \rightarrow
O_{i,2m+2} \rightarrow O_{i+m,m+2}$ and $I_{i,m+2} \rightarrow
I_{i,2m+2} \rightarrow I_{i+m,m+2}$ are $0$. See Figure
\ref{figcompzero}. If $t=2$, we have equalities of compositions of
elementary moves. See Figure \ref{figcompzero2}.

  \begin{figure}[htp]
  \begin{center}
    \includegraphics[width=11.5cm]{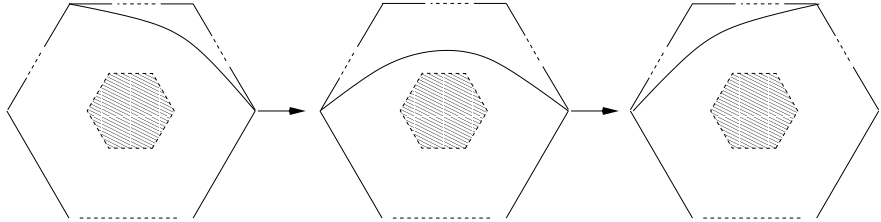}
  \end{center}\caption{\label{figcompzero} The composition $O_{i,m+2}
    \rightarrow O_{i,2m+2} \rightarrow O_{i+m,m+2}$ is $0$.}   
  \end{figure}

  \begin{figure}[htp]
  \begin{center}
    \includegraphics[width=11.5cm]{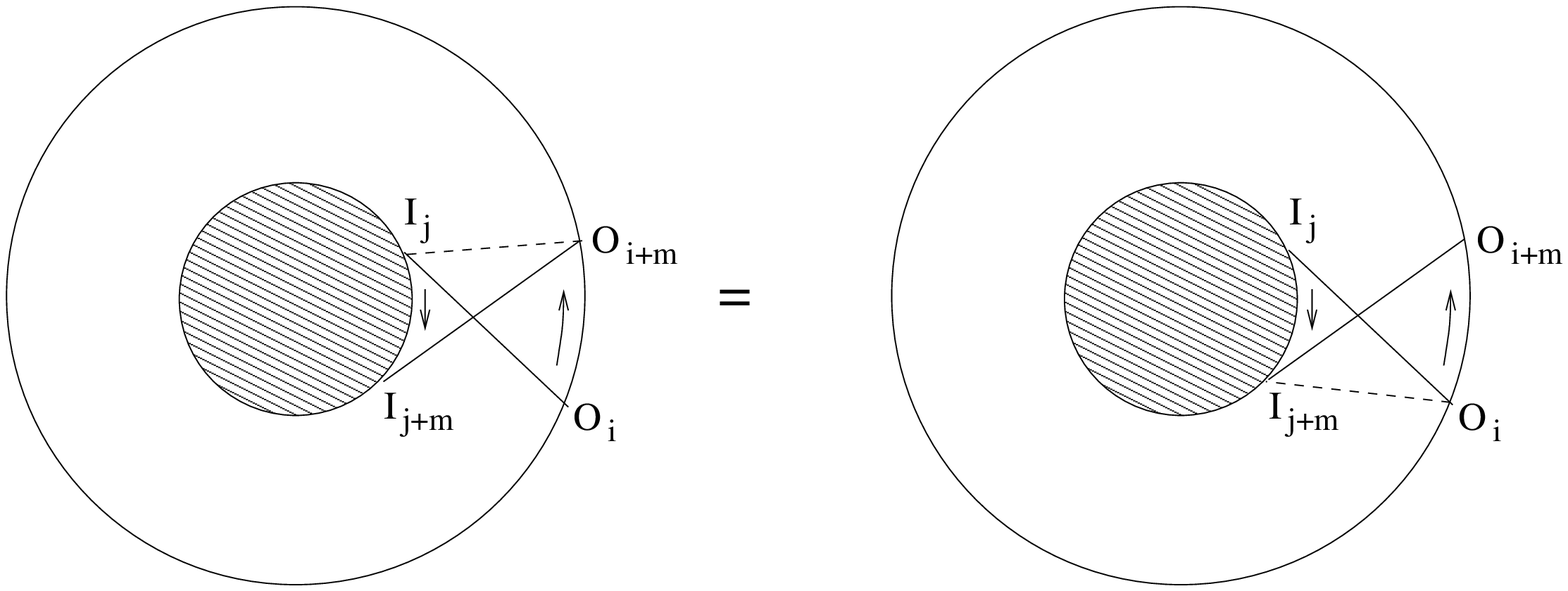}
    \includegraphics[width=11.5cm]{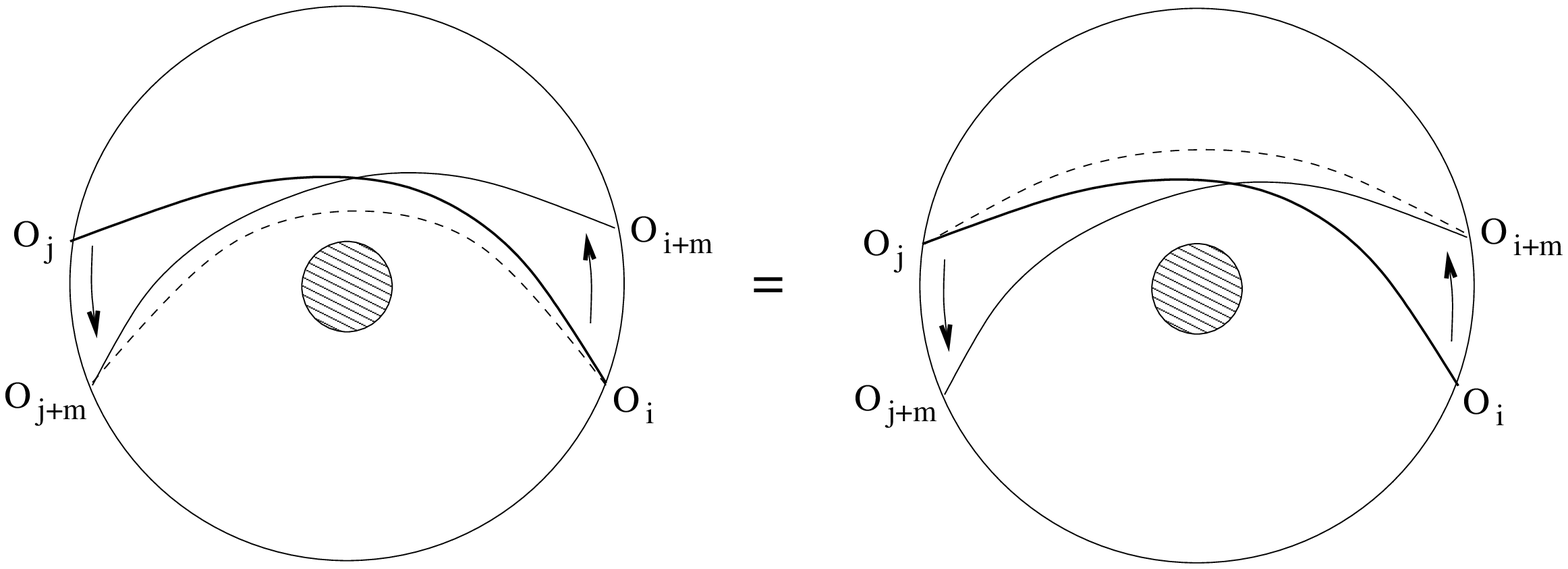}
  \end{center}\caption{\label{figcompzero2} Equalities of compositions
  of elementary moves of diagonals of Type 1 and Type 2
  respectively. The dotted line is the intermediate $m$-diagonal in
  the composition.}   
  \end{figure}

The translation $\tau$ is clearly an equivalence on this category. In
fact $[s]$ is an equivalence for all $s$.

\section{The $m$-cluster category and the category of
  $m$-diagonals}\label{arquiver}
Given $P_{p,q,m}$ and the category of $m$-diagonals $C^m_{p,q}$, we
define a quiver in the following way. The vertices are the
indecomposable objects (i.e. the $m$-diagonals), and there is an arrow
from the indecomposable object $\alpha$ to the indecomposable object
$\beta$ if an only if there is an elementary move $\alpha \rightarrow
\beta$. We call this the AR-quiver of $C^m_{p,q}$, and we will see
that it is isomorphic to a subquiver of the AR-quiver of the
$m$-cluster category obtained from $\widetilde{A}_{p,q}$.

Let $\alpha$ be an $m$-diagonal. If $\alpha$ is of Type 1, we say that
$\alpha$ is in level $d$, where $0 \leq d <m$, if we can obtain the
$m$-diagonal between $O_d$ and $I_d$ with winding number $0$ from
$\alpha$ by applying a finite sequence of $\tau$ and elementary
moves. If $\alpha$ is of Type 2 (or 3), we say that $\alpha$ is in
level $d$ if we can obtain the $m$-diagonal $O_{d,m}$ (or $I_{d,m}$)
from $\alpha$ by a finite sequence of $\tau$ and elementary moves.  

It is straightforward to show that every $m$-diagonal is in some
level. Given an $m$-diagonal in level $d$, we can not reach another
$m$-diagonal in a level $\neq d$ by a finite sequence of $\tau$ and
elementary moves. It follows that the quiver of $m$-diagonals consists
of at least $m$ components. It is also clear that there is no sequence
of $\tau$ and elementary moves between $m$-diagonals of different
types. Also, there exist a sequence of elementary moves and $\tau$
between any two $m$-diagonals in the same level and of the same
type. In fact, if $\alpha$ is of Type 1 in level $d$, then $\alpha$ is
of the form $\tau^s \alpha'[d]$, where $\alpha' \in
\Delta^0_{p,q,m}$. It follows that the quiver consists of $3m$
components. It is easy to see that the components consisting of
$m$-diagonals of the same type are isomorphic.  

Let $T^d_p$ be the component containing objects of Type 2 in level
$d$ and let $T^d_q$ be the component containing objects of type Type 3
in level $d$. Denote by $S^d$ the component consisting of
$m$-diagonals of Type 1 in level $d$. See Figure \ref{figarquiver}
for an example. We draw the translation $\tau$ as dotted arrows. 

Now we want to define an additive functor $F : C^m_{p,q} \rightarrow
\mathcal{C}^m$, where $\mathcal{C}^m$ is the $m$-cluster category of
type $\widetilde{A}_{p,q}$. It is enough to define the functor on
indecomposable objects and elementary moves. We also want that $F$
induces a quiver isomorphism between the AR-quiver of the category of
$m$-diagonals and a subquiver of the AR-quiver of the $m$-cluster
category. 

First we consider the objects in the components $S^d$. Denote by
$\mathcal{S}^d$ the component in the AR-quiver of the $m$-cluster
category consisting of objects of the form $\tau_{\mathcal{C}}^s
P[d]$, where $P$ is a projective and $\tau_{\mathcal{C}}$ is the
Auslander-Reiten translation. We want to show that $S^d$ is
isomorphic to $\mathcal{S}^d$ via the functor $F$, which we will
define below. When there is no confusion, we write the AR-translation
$\tau_{\mathcal{C}}$ as $\tau$. Similarly with the shift functor $[i]$.

First we make a choice, and it is natural to let the
$(m+2)$-angulation $\Delta^0_{p,q,m}$ and the quiver
$Q_{\Delta^0_{p,q,m}}$ in Figure \ref{figparticulartriquiv}
correspond to the $m$-cluster tilting object $T=\oplus P_i$, where
$P_i$ is the projective corresponding to vertex $i$ in the quiver. The
$m$-diagonal $\alpha_i$ in $\Delta^0_{p,q,m}$ corresponding to vertex
$i$ is mapped to $P_i$. Also, we define $F(\tau^s \alpha_i) =
\tau^s P_i$ for all integers $s$. By Lemma
\ref{lemmatau} and by the fact that all elements in $S^0$ are of the
form $\tau^s \alpha_i$, this gives a bijection between the
$m$-diagonals in $S^0$ and the objects in the component
$\mathcal{S}^0$ of the AR-quiver of the $m$-cluster category.  

Next we define 
$F(\alpha_i[d]) = P_{i}[d]$ and $F(\tau^t \alpha_i[d]) =
\tau^t P_i[d]$ for all integers $t$. This takes care of
all $m$-diagonals of Type 1, and by Lemma \ref{lemmatau} and by the
fact that all elements in $S^d$ are of the form $\tau^t \alpha_i[d]$,
this is a bijection between the set of $m$-diagonals of Type 1 and the
set of indecomposable objects in the transjective components in the
AR-quiver of the $m$-cluster category. 

Next we want to show that elementary moves (or arrows in the quiver of
$m$-diagonals) correspond to irreducible morphisms (arrows in the
AR-quiver of the $m$-cluster category). Suppose there is an
elementary move between two $m$-diagonals $\alpha \rightarrow
\beta$. Then there exist an integer $s$ such that $\tau^s \alpha$ is
of the form $\alpha_i[d]$ for some integer $d$ and $\alpha_i \in
\Delta^0_{p,q,m}$. Also, there is an elementary move $\tau^s \alpha 
\rightarrow \tau^s \beta$. We have $F(\tau^s \alpha) =
F(\alpha_i[d])=P_i[d]$, where $P_i$ is the projective corresponding to
the vertex $i$ in $Q_{\Delta^0_{p,q,m}}$. By considering the
possibilities for $\tau^s \beta$, it is now straightforward to check
that there is an arrow $F(\tau^s \alpha) = P_i[d] \rightarrow F(\tau^s
\beta)$, and hence there is an arrow $F(\alpha) \rightarrow
F(\beta)$. We leave the details to the reader. The converse is
similar. 

\begin{prop}
The component $S^d$ in the AR-quiver of $\mathcal{C}^m_{p,q}$ is
isomorphic to the component $\mathcal{S}^d$ in the AR-quiver of the
$m$-cluster category $\mathcal{C}^m$ of type $\widetilde{A}_{p,q}$,
for all integers $d$, with $0 \leq d < m$
\end{prop}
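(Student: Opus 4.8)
The plan is to promote the data already assembled just before the statement into a genuine isomorphism of (translation) quivers $S^d \to \mathcal{S}^d$ induced by the functor $F$. Recall that $F$ is defined on objects by $F(\tau^t\alpha_i[d]) = \tau^t P_i[d]$, and that, by Lemma \ref{lemmatau} together with the fact that every object of $S^d$ is of the form $\tau^t\alpha_i[d]$, this is already a bijection between the vertices of $S^d$ and those of $\mathcal{S}^d$. It has also been checked that an elementary move $\alpha\to\beta$ produces an arrow $F(\alpha)\to F(\beta)$ in the AR-quiver. So the only remaining task is to show that the induced map on arrows is a bijection and that $F$ respects the translation.

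First I would record that $F$ is $\tau$-equivariant, $F(\tau\alpha)=\tau F(\alpha)$, which is immediate from the definition; hence $F$ is a morphism of stable translation quivers. Because every vertex of $S^d$ is a $\tau$-power of some $\alpha_i[d]$ and every vertex of $\mathcal{S}^d$ is a $\tau$-power of some $P_i[d]$, and because the forward arrow assignment commutes with $\tau$ (if $\alpha\to\beta$ is an elementary move then so is $\tau\alpha\to\tau\beta$, by the earlier proposition on elementary moves and $\tau$), it suffices to verify the arrow bijection locally, namely on the arrows emanating from each $\alpha_i[d]$.

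The verification is then a counting argument. On the geometric side, a Type 1 $m$-diagonal admits exactly two elementary moves out of it (move the outer endpoint $m$ steps counterclockwise, or move the inner endpoint $m$ steps clockwise), so every vertex of $S^d$ has out-degree two. On the categorical side, $\mathcal{S}^d$ is the transjective component through $P_i[d]$; it is isomorphic to $\mathbb{Z}\widetilde{A}_{p,q}$, and since the underlying graph of $\widetilde{A}_{p,q}$ is a $(p+q)$-cycle in which every vertex has degree two, every vertex of $\mathbb{Z}\widetilde{A}_{p,q}$ likewise has exactly two arrows leaving it. As $F$ is injective on vertices, and there is at most one arrow between any ordered pair of vertices on either side (the two elementary moves out of $\alpha_i[d]$ have distinct targets, and $\mathbb{Z}\widetilde{A}_{p,q}$ has no multiple arrows), the forward map sends the two arrows out of $\alpha_i[d]$ injectively, hence bijectively, onto the two arrows out of $P_i[d]$. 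Propagating by $\tau$ yields a bijection on all arrows, so $F$ restricts to an isomorphism of translation quivers $S^d\cong\mathcal{S}^d$.

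The step I expect to carry the real content is the identification of $\mathcal{S}^d$ with $\mathbb{Z}\widetilde{A}_{p,q}$: one must confirm that the component of $\mathcal{C}^m$ through $P_i[d]$ is the full transjective component of that shape, so that $F$ is onto its objects and the local out-degree is genuinely two at every vertex. This rests on the known structure of the AR-quiver of the $m$-cluster category for tame hereditary type $\widetilde{A}$, and on the earlier claim that $F$ is a bijection onto the objects of $\mathcal{S}^d$; once that is secured, the arrow bijection and the conclusion are purely formal.
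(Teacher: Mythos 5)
Your proposal is correct and takes essentially the same approach as the paper: the paper's justification for this proposition is precisely the preceding discussion, namely the vertex bijection $F(\tau^t\alpha_i[d])=\tau^t P_i[d]$ (via Lemma \ref{lemmatau}) together with the check that elementary moves induce arrows in the AR-quiver. The only deviation is that where the paper verifies the converse direction by direct inspection (``the converse is similar''), you obtain surjectivity on arrows from the out-degree-two count on both sides; this is a valid shortcut since $p,q\geq 2$ guarantees $\mathbb{Z}\widetilde{A}_{p,q}$ has no multiple arrows.
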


  \begin{figure}[htp]
  \begin{center}
    \includegraphics[width=12.5cm]{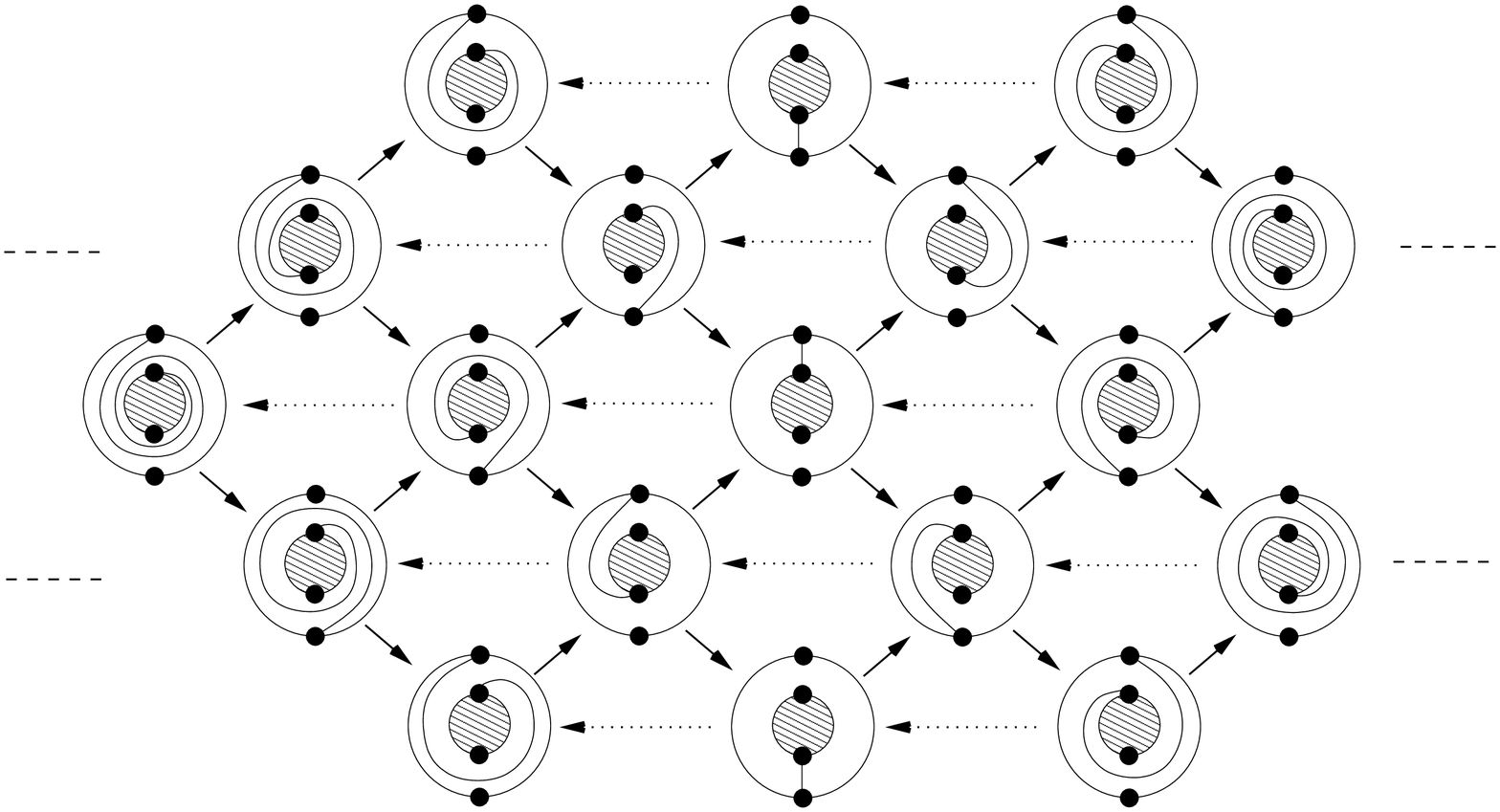}
    \includegraphics[width=9cm]{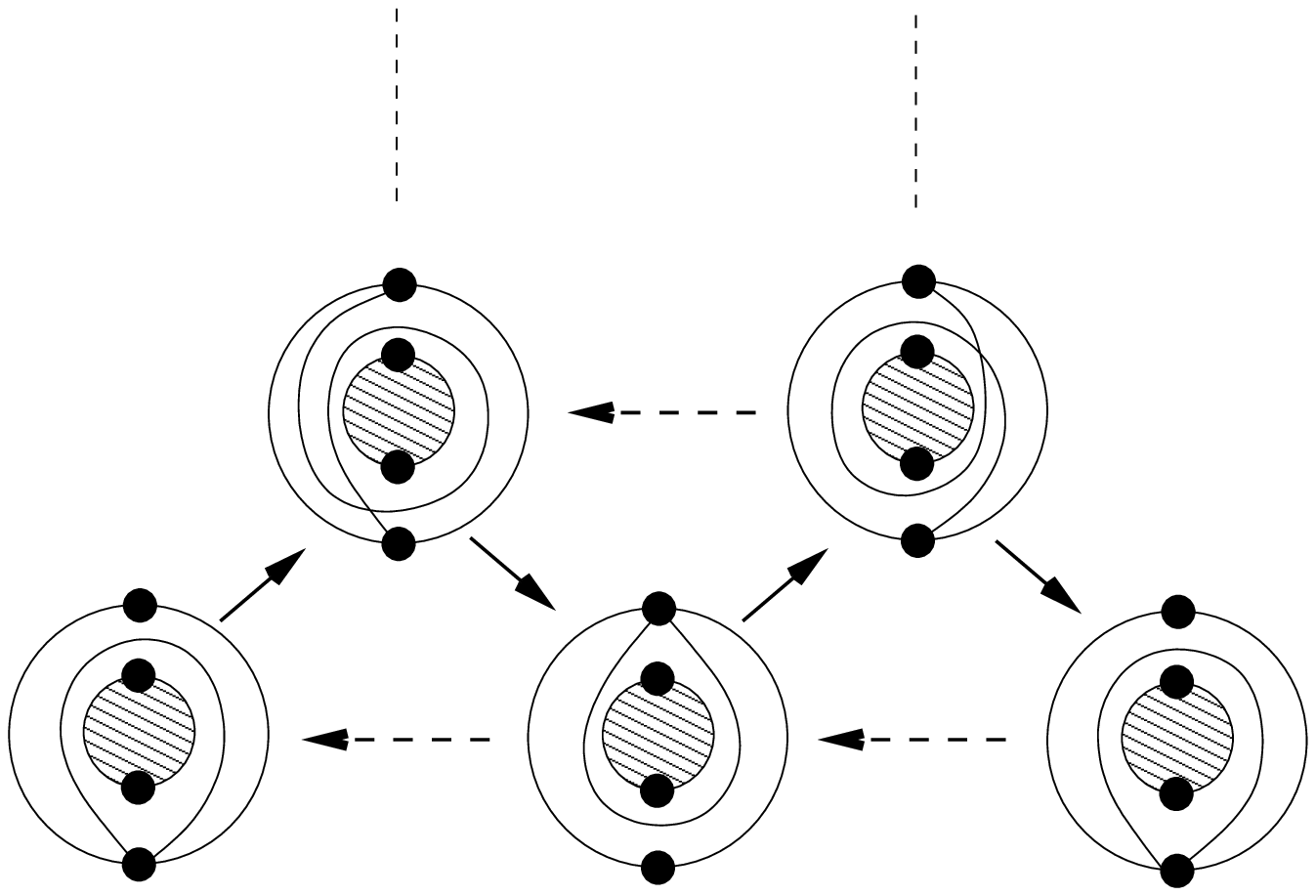}
    \includegraphics[width=9cm]{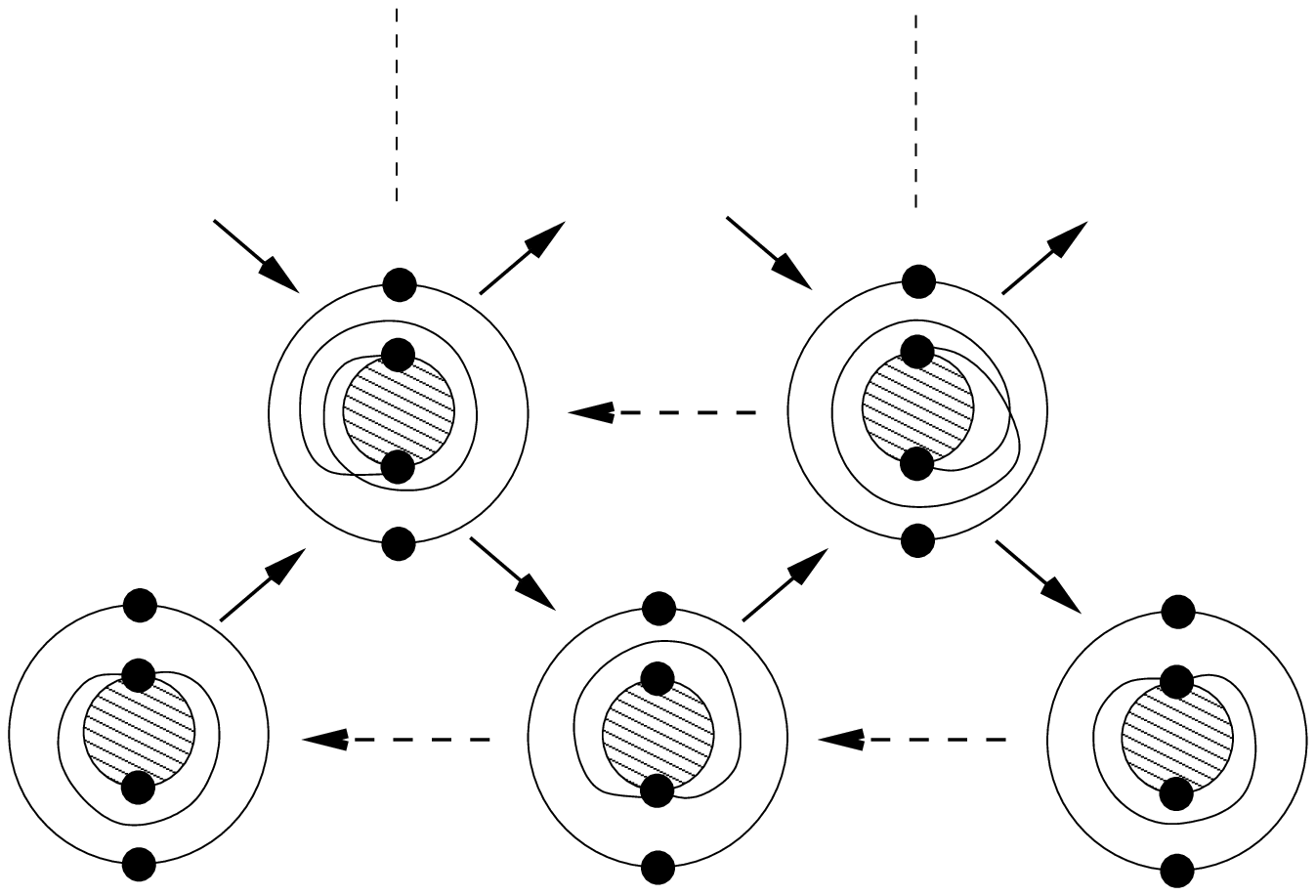}
  \end{center}\caption{\label{figarquiver} The quiver of
    $\mathcal{C}^1_{2,2}$; component $S$, $T_{p=2}$ and $T_{q=2}$
    respectively.}   
  \end{figure}

Now we consider the objects in the components $T_p^d$ and $T_q^d$ in
the AR-quiver of $C^m_{p,q}$. Let $\mathcal{T}^d_p$ and
$\mathcal{T}^d_q$ be the tubes of rank $p$ and $q$ respectively in the
$m$-cluster category of type $\tilde{A}_{p,q}$, where $d$ denotes the
degree. The tubes of rank $p$ are isomorphic, and they are isomorphic
to $\mathbb{Z}A_{\infty}/(\tau^p)$. The tubes of rank $q$ are isomorphic
to $\mathbb{Z}A_{\infty}/(\tau^q)$. We know 
that the tube $\mathcal{T}^1_p$ has $p$ quasi-simple objects, say
$Q^1_0, Q^1_1,..., Q^1_{p-1}$. Then the quasi-simple objects in
$\mathcal{T}^d_p$ are of the form $Q^1_0[d], Q^1_1[d],...,
Q^1_{p-1}[d]$. There is only one arrow to and one arrow from each of these
objects in the AR-quiver. Let the indecomposable objects in
$\mathcal{T}^d_p$ be denoted $Q_i^s[d]$, where $s$ is the quasi-length of 
$Q_i^s$. For a given $i$, a ray is the sequence of irreducible
maps $$Q^1_i[d] \rightarrow Q^2_i[d] \rightarrow Q^3_i[d] \rightarrow
Q^4_i[d] \rightarrow \hdots,$$ and a coray is the sequence $$\hdots
\rightarrow Q^4_{i-3}[d] \rightarrow Q^3_{i-2}[d]  \rightarrow
Q^2_{i-1}[d] \rightarrow Q^1_i[d].$$ We are in the situation shown in
Figure \ref{figarquivertube}. 

\begin{figure}[htp]
  \begin{center}
\[\xymatrix@C=0.5pc{
&\vdots&&&&\vdots&&&&\vdots&\\
&Q^4_{p-1}\ar[dr]&&Q^4_0\ar@{.>}[ll]\ar[dr]&&...&&Q^4_{p-3}\ar[dr]&&Q^4_{p-2}\ar@{.>}[ll]\ar[dr]&\\
Q^3_{p-1}\ar[dr]\ar[ur]&&Q^3_0\ar@{.>}[ll]\ar[dr]\ar[ur]&&Q^3_1\ar@{.>}[ll]&...&Q^3_{p-3}\ar[dr]\ar[ur]&&Q^2_{p-2}\ar@{.>}[ll]\ar[dr]\ar[ur]&&Q^2_{p-1}\ar@{.>}[ll]\\
&Q^2_0\ar[dr]\ar[ur]&&Q^2_1\ar@{.>}[ll]\ar[dr]\ar[ur]&&...&&Q^2_{p-2}\ar[dr]\ar[ur]&&Q^2_{p-1}\ar@{.>}[ll]\ar[dr]\ar[ur]&\\
Q^1_0\ar[ur]&&Q^1_1\ar@{.>}[ll]\ar[ur]&&Q^1_2\ar@{.>}[ll]&...&Q^1_{p-2}\ar[ur]&&Q^1_{p-1}\ar@{.>}[ll]\ar[ur]&&Q^1_0\ar@{.>}[ll]
}\]
  \end{center}\caption{\label{figarquivertube} A tube $\mathcal{T}_p$ of rank $p$ in
  the cluster category.}  
  \end{figure}
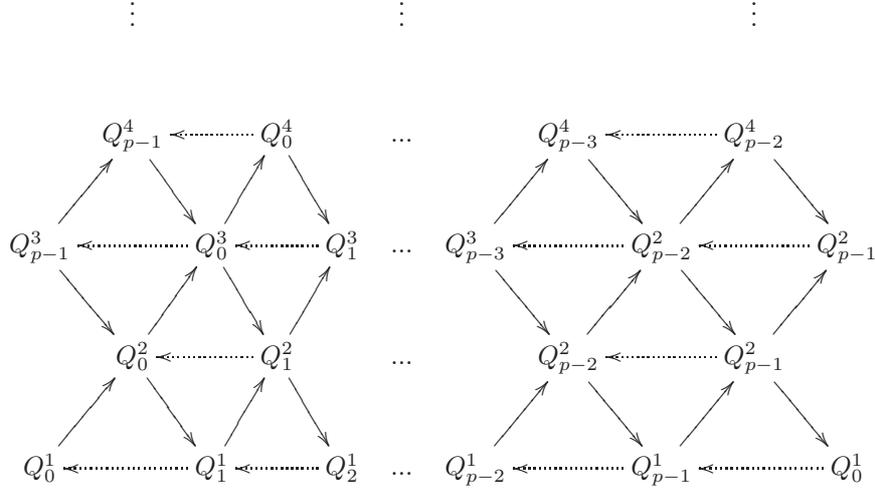

For a fixed $d$, with $0 \leq d <m$, we have that $\tau Q^s_i[d] =
Q^s_{i-1}[d]$, and that $\tau^p Q^s_i[d] = Q^s_i[d]$ for all $s$ and
$i$. It is clear, by Lemma \ref{lemmatau}, that the quasi-simple
objects have to correspond to diagonals of the form $O_{i,m+2}$, since
there are exactly one elementary move from and to $O_{i,m+2}$ and
their $\tau$-orbit is exactly $p$. 

For a fixed $d$ , we define $F(O_{0,m+2}[d]) = Q^1_0[d]$, $F(O_{0,2m+2}[d]) =
Q^2_0[d]$ and so on, i.e. $F(O_{0,jm+2}[d]) = Q^j_0[d]$. This corresponds to
the ray starting in $Q^1_0[d]$. We have $\tau^{-1} O_{0,m+2}[d] =
O_{m,m+2}[d]$, so we set $F(\tau^{-1} O_{0,m+2}[d]) = F(O_{m,m+2}[d])
= \tau^{-1} Q^1_0[d] = Q^1_1[d]$. Then we continue as above, and 
the pattern is clear, so in general we define $F(O_{im,jm+2}[d])
= Q_i^{j}[d]$. This takes care of all $m$-diagonals of Type 2. We do
similarly with the $m$-diagonals of Type 3, and they correspond to
objects in the tubes of rank $q$. We leave the details to the reader. 

Next we show that we have an irreducible morphism $Q^s_i[d] \rightarrow
Q^t_j[d]$ if and only if there is an elementary move $O_{im, sm+2}[d]
\rightarrow O_{jm,tm+2}$.

If $Q^s_i[d]$ is quasi-simple, $s=1$, and so
hence $t=2$ and $j=i$. Then we have an elementary move 
$$O_{im,sm+2}[d] = O_{im,m+2}[d] \rightarrow O_{jm,tm+2}[d]=O_{im,2m+2}[d].$$ 

If $Q^s_i$ is not quasi-simple, we have $s>1$, $t=s+1$ and $j=i$ or
$t=s-1$ and $j=i+1$. Then we have elementary moves 
$$O_{im,sm+2}[d] \rightarrow O_{jm,tm+2}[d]=O_{im,(s+1)m+2}[d] = O_{im,(sm+2+m)}[d]$$ 
and 
$$O_{im,sm+2}[d] \rightarrow O_{jm,tm+2}[d]=O_{(i+1)m,(s-1)m+2}[d] =
O_{im+m, sm+2-m}[d].$$   

The converse is similar.

We see that given any object $Q_i^s$, which corresponds to
$O_{im,sm+2}$, we have that $\tau Q_i^s = Q_{i-1}^s$ corresponds to
$\tau O_{im,sm+2} = O_{im-m,sm+2}$.


\begin{prop}
The components $T^d_p$ and $T^e_q$ of the quiver of $\mathcal{C}_{p,q}$
are isomorphic to $\mathcal{T}^d_p$ and $\mathcal{T}^e_q$ respectively
in the $m$-cluster category.  
\end{prop}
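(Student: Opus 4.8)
The plan is to establish the claimed isomorphism of quivers component by component, by exhibiting $F$ as a bijection on vertices (the indecomposable objects) that carries arrows (elementary moves) to arrows (irreducible morphisms) in both directions, and that commutes with $\tau$. Since the preceding discussion already fixes the assignment $F(O_{im,jm+2}[d]) = Q_i^j[d]$ for Type 2 $m$-diagonals and an analogous assignment $F(I_{im,jm+2}[e]) = R_i^j[e]$ for Type 3, the first step is to verify that these assignments are genuine bijections. For this I would invoke Lemma \ref{lemmatau}: in a fixed level $d$ the $\tau$-orbit of each quasi-simple $O_{i,m+2}[d]$ has length exactly $p$, matching the relation $\tau^p Q_i^s[d] = Q_i^s[d]$ in the tube $\mathcal{T}_p^d \cong \mathbb{Z}A_\infty/(\tau^p)$. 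Thus the indexing by $(i \bmod p, j)$ of Type 2 $m$-diagonals in level $d$ matches the indexing of the objects $Q_i^j[d]$ of the tube exactly, and $F$ is a bijection on these vertices; the same argument with $q$ handles the Type 3 diagonals and $\mathcal{T}_q^e$.

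The second, and main, step is the arrow-correspondence, which I would prove exactly along the lines sketched just before the statement. The point is that each non-quasi-simple object $Q_i^s$ in a tube has precisely two irreducible morphisms out of it — one along its ray ($Q_i^s \to Q_i^{s+1}$) and one along its coray ($Q_i^s \to Q_{i+1}^{s-1}$) — while each quasi-simple object has a single outgoing irreducible map along its ray. This matches the elementary-move structure of Type 2 $m$-diagonals as recorded in Section \ref{categoryofdiagonals}: $O_{i,m+2}$ has the single move $O_{i,m+2} \to O_{i,m+2+m}$, while for $k > m+2$ there are exactly the two moves $O_{i,k} \to O_{i,k+m}$ and $O_{i,k} \to O_{i+m,k-m}$. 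Translating through $F$, the ray move $O_{im,sm+2}[d] \to O_{im,(s+1)m+2}[d]$ corresponds to $Q_i^s[d] \to Q_i^{s+1}[d]$ and the coray move $O_{im,sm+2}[d] \to O_{(i+1)m,(s-1)m+2}[d]$ corresponds to $Q_i^s[d] \to Q_{i+1}^{s-1}[d]$, which is precisely the computation already carried out in the text; the converse direction is identical since both structures have exactly the same number of incoming and outgoing arrows at each vertex.

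The remaining step is to check $\tau$-equivariance, which guarantees that $F$ not only matches the finite local arrow data but identifies the two infinite periodic patterns globally. Here I would use the relation established right before the statement, namely that $O_{im,sm+2}$ corresponds to $Q_i^s$ while $\tau O_{im,sm+2} = O_{(i-1)m,sm+2}$ corresponds to $\tau Q_i^s = Q_{i-1}^s$; combined with $\tau^p = \mathrm{id}$ on both sides this shows $F \circ \tau = \tau \circ F$ on all of $T_p^d$, and analogously $F \circ \tau = \tau \circ F$ on $T_q^e$. Since a morphism of quivers between two copies of $\mathbb{Z}A_\infty/(\tau^p)$ that is bijective on vertices, respects $\tau$, and matches the ray/coray arrows at every vertex is automatically a quiver isomorphism, this completes the argument for $T_p^d$, and the identical reasoning with $q$ in place of $p$ gives the statement for $T_q^e$.

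I expect the only genuine obstacle to be bookkeeping rather than conceptual: one must be careful that the index reductions modulo $mp$ (resp.\ $mq$) for the $m$-diagonals correspond correctly to the reductions modulo $p$ (resp.\ $q$) for the tube objects, since it is the factor of $m$ hidden in the level-$d$ parametrization $O_{im,jm+2}[d]$ that makes the $\tau$-orbits have length exactly $p$ and not $mp$. Verifying that the quasi-simple $m$-diagonals are forced to be those of the form $O_{i,m+2}$ — already noted via Lemma \ref{lemmatau} as the unique $m$-diagonals admitting exactly one elementary move in and one out — is what pins down this correspondence and rules out any off-by-$m$ mismatch.
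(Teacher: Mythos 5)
Your proposal is correct and follows essentially the same route as the paper: the paper's justification for this proposition is precisely the preceding discussion, which pins down the quasi-simples as the $O_{i,m+2}$ via Lemma \ref{lemmatau} and the uniqueness of their elementary moves, defines $F(O_{im,jm+2}[d]) = Q_i^j[d]$ along rays, matches the ray/coray irreducible morphisms with the two elementary moves, and checks compatibility with $\tau$. You merely make explicit the final observation that a vertex bijection respecting $\tau$ and the local arrow structure is a quiver isomorphism, which the paper leaves implicit.
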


It is now easy to see that $F$ is in fact a functor. We know that the
AR-quiver of the $m$-cluster category has tubes of rank 1,
i.e. non-rigid homogeneous objects. The functor $F$ is therefore not
dense. However, if $\mathcal{C}'$ is the full subcategory of the
$m$-cluster category $\mathcal{C}^m$ consisting of all objects not in a
homogeneous tube, we get the induced functor $G:\mathcal{C}^m_{p,q}
\rightarrow \mathcal{C'}$. Then $G$ is dense and faithful. The functor
is not full, since we do not have any maps in $C^m_{p,q}$ between the
objects in different components in the AR-quiver, i.e. no maps
corresponding to maps in the infinite radical.

The following theorem summarizes.

\begin{thm}
The functor $G$ induces a quiver isomorphism between the AR-quiver of
$\mathcal{C}^m_{p,q}$ and the AR-quiver of $\mathcal{C'} \subset
\mathcal{C}^m$. Furthermore $G$ is dense and faithful, and we have
that $F(\alpha[s]) = F(\alpha)[s]$, for any $m$-diagonal $\alpha$ and
all integers $s$.
\end{thm}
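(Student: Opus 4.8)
The plan is to assemble the theorem from the two preceding propositions together with the explicit, $[\cdot]$-equivariant definition of $F$. First I would observe that the AR-quiver of $\mathcal{C}^m_{p,q}$ decomposes as the disjoint union of the $3m$ components $S^d$, $T^d_p$ and $T^d_q$ (for $0 \le d < m$), while the AR-quiver of $\mathcal{C}'$ decomposes as the disjoint union of the transjective components $\mathcal{S}^d$ together with the tubes $\mathcal{T}^d_p$ and $\mathcal{T}^d_q$ of rank $p$ and $q$; the full subcategory $\mathcal{C}'$ was defined precisely so that the homogeneous (rank $1$) tubes are removed, so these components exhaust $\mathcal{C}'$. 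The two propositions give isomorphisms of translation quivers $S^d \cong \mathcal{S}^d$, $T^d_p \cong \mathcal{T}^d_p$ and $T^d_q \cong \mathcal{T}^d_q$ induced by $F$, which sends each elementary move bijectively to an irreducible morphism. Gluing these component isomorphisms yields the desired global quiver isomorphism.

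Next I would deduce density and faithfulness. Density is immediate from the object bijections in the propositions: every indecomposable of $\mathcal{C}'$ lies in one of the components $\mathcal{S}^d$, $\mathcal{T}^d_p$ or $\mathcal{T}^d_q$, and each such object is $F(\alpha)$ for a unique $m$-diagonal $\alpha$. For faithfulness I would use that $\Hom_{\mathcal{C}^m_{p,q}}(X,Y)$ is, by construction, spanned by paths of elementary moves modulo the mesh relations. Under $F$ these paths become composites of irreducible morphisms in the corresponding component of $\mathcal{C}'$, and the mesh relations of $\mathcal{C}^m_{p,q}$ are carried exactly to the Auslander--Reiten (mesh) relations of that component. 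Since each such component is standard, so that its morphism spaces within the finite radical are computed by the mesh category of its translation quiver, the induced map on Hom spaces is injective and $G$ is faithful. It fails to be full because the infinite-radical morphisms between distinct components have no preimage.

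Finally, the equivariance $F(\alpha[s]) = F(\alpha)[s]$ I would check directly on each type, using that $\tau = [m]$ on both categories. Every Type $1$ $m$-diagonal has the form $\alpha_i[s_0]$ with $\alpha_i \in \Delta^0_{p,q,m}$, and the definition $F(\tau^t\alpha_i[d]) = \tau^t P_i[d]$ together with $\tau^t\alpha_i[d] = \alpha_i[tm+d]$ gives $F(\alpha_i[s_0]) = P_i[s_0]$; hence $F(\alpha_i[s_0][s]) = P_i[s_0+s] = F(\alpha)[s]$. The Type $2$ and Type $3$ cases are identical, using $F(O_{im,jm+2}[d]) = Q^j_i[d]$ and the analogous formula on the tubes of rank $q$, together with the fact that $[\cdot]$ acts compatibly on the ray and coray structure. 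Since $[\cdot]$ is an equivalence of both categories, equivariance on the generators propagates to all objects.

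The step I expect to be the main obstacle is faithfulness: it rests on the standardness of the transjective components and the exceptional tubes of the $m$-cluster category, that is, on the fact that no nonzero combination of composites of irreducible maps (modulo mesh relations) vanishes in $\mathcal{C}'$. Once that standardness is in hand, the comparison of the two mesh presentations makes the Hom-space map injective, and the remaining assertions are bookkeeping.
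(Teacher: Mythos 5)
Your proposal is correct and follows essentially the same route as the paper, which offers no separate proof but presents the theorem as a summary of the preceding section: the component-by-component isomorphisms $S^d \cong \mathcal{S}^d$, $T^d_p \cong \mathcal{T}^d_p$, $T^d_q \cong \mathcal{T}^d_q$ from the two propositions, the object bijections giving density, the absence of cross-component maps explaining non-fullness, and the explicit formulas for $F$ giving the $[s]$-equivariance. The only place you supply more than the paper does is the justification of faithfulness via standardness of the transjective components and exceptional tubes, which is a legitimate (and needed) addition rather than a deviation.
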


\section{Bijection between the mutation class and an equivalence class
of triangulations}\label{bijection}

Let $\mathcal{T}_{p,q,m}$ be the set of all $(m+2)$-angulations of
$P_{p,q,m}$, and let $\mathcal{M}_{p,q,m}$ be the mutation class of
$m$-coloured quivers of type $\widetilde{A}_{p,q}$. Recall that we
have surjective function $$\sigma_{p,q,m}: \mathcal{T}_{p,q,m}
\rightarrow \mathcal{M}_{p,q,m},$$ where $\sigma_{p,q,m}(\Delta) =
Q_{\Delta}$. As we have already pointed out, this function commutes with
mutation, i.e. $\sigma_{p,q,m}(\mu_i \Delta) =
\mu_i(\sigma_{p,q,m}(\Delta)) = \mu_i Q_{\Delta}$. We also know that
$\sigma_{p,q,m}$ is not injective, since for example
$\sigma_{p,q,m}(\Delta) = \sigma_{p,q,m}(\tau \Delta)$.   

We want to find an equivalence relation $\sim$ on
$\mathcal{T}_{p,q,m}$ such that we obtain a bijection induced from
$\sigma_{p,q,m}$, $$\widetilde{\sigma}_{p,q,m}:
(\mathcal{T}_{p,q,m}/\!\!\sim) \rightarrow \mathcal{M}_{p,q,m}.$$  

We define two functions, $r_O$ and $r_I$, on the set of
diagonals. Let $\alpha$ be a diagonal. Define $r_O \alpha$ to
be the diagonal obtained from $\alpha$ by rotating the outer
polygon $1$ step clockwise and $r_I \alpha$ to be the diagonal
obtained by rotating the inner polygon $1$ step
counterclockwise. This function is not well-defined on $m$-diagonals
when $m>1$, for if $\alpha$ is an $m$-diagonal, then $r_O \alpha$ and 
$r_I \alpha$ are not necessarily $m$-diagonals when $\alpha$ is of
Type 1. However, we note that the functions $r_O r_I = r_I r_O$,
$r_O^m$ and $r_I^m$ are well-defined on $m$-diagonals, and also that
$r_O r_I = [1]$ and consequently $r_O^m r_I^m = \tau$. We let
$r_O^{-1}$ and $r_I^{-1}$ be rotating in the opposite direction.   

For simplicity, we will in this section consider sets of diagonals
that divides $P_{p,q,m}$ into $(m+2)$-gons. In other words, we will
allow diagoals that do not necessarily satisfy the restrictions for 
$m$-diagonals. This is just for simplicity, and if we can find the
desired equivalence relation on this set, it induces an equivalence
relation on the set of $(m+2)$-angulations consisting of only
$m$-diagonals. We will still call elements in this bigger set
$(m+2)$-angulations. We can associate to an element in this set a
coloured quiver as before.

If $\Delta$ is an $(m+2)$-angulation, we define $r_O \Delta$ ($r_I
\Delta$) to be the set of diagonals obtained from $\Delta$ by
applying $r_O$ ($r_I$) on each diagonal in $\Delta$. Similarly we
define $r_I \Delta$. Note that by rotating the outer or inner polygon,
$(m+2)$-gons and hence the corresponding coloured quivers are
preserved.  


We define another function on the set of $(m+2)$-angulations that
sends an $(m+2)$-angulation of $P_{p,q,m}$ to an $(m+2)$-angulation of 
$P_{q,p,m}$. Given an $(m+2)$-angulation, we can "flip" it by making
the outer polygon the inner polygon and the inner polygon the outer
polygon. We shall see that this operation corresponds to reversing
every arrow in the corresponding quiver. We can visualize this
operation as continuously stretching the $(m+2)$-angulation in three dimensions,
such that the interior together with the diagonals become the side
surface of a sylinder, the inner polygon becomes the top of the
sylinder and the outer polygon becomes the bottom. Then we push the
sylinder back into the plane, making the top of the sylinder the outer
polygon and the bottom the inner polygon, thus "flipping" the
$(m+2)$-angulation. We denote the flipped $(m+2)$-angulation $\Delta$
by $\epsilon(\Delta)$. Clearly $\epsilon(\epsilon(\Delta)) = \Delta$.


\begin{lem}
If $\Delta$ is an $(m+2)$-angulation of $P_{p,q,m}$, then
$\epsilon(\Delta)$ is an $(m+2)$-angulation of $P_{q,p,m}$. The quiver
$Q_{\epsilon(\Delta)}$ is obtained from $Q_{\Delta}$ by reversing all
arrows.  
\end{lem}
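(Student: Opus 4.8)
The plan is to treat the two assertions separately, the first by a purely topological observation and the second by tracking the colouring rule through the flip.

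For the first assertion I would realise $\epsilon$ as a homeomorphism $\Phi$ of the closed annulus that interchanges the two boundary circles, carrying the $mp$ marked points on the outer polygon of $P_{p,q,m}$ onto the $mp$ marked points on the inner polygon of $P_{q,p,m}$, and the $mq$ marked points on the inner polygon onto the $mq$ marked points on the outer polygon of $P_{q,p,m}$; this is exactly the cylinder picture described just before the statement, and the counts match since $P_{q,p,m}$ has an outer $mq$-gon and an inner $mp$-gon. Because a homeomorphism preserves homotopy classes of paths, preserves crossing numbers, and carries each complementary region of $\Delta$ onto a complementary region of $\epsilon(\Delta)$, it follows at once that $\epsilon(\Delta)$ is a set of pairwise non-crossing diagonals of $P_{q,p,m}$ whose complementary regions are again $(m+2)$-gons. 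Hence $\epsilon(\Delta)$ is an $(m+2)$-angulation of $P_{q,p,m}$ in the sense of this section. Here we only need the dissection into $(m+2)$-gons, not the congruence restrictions defining genuine $m$-diagonals, which is precisely why we work in the larger set.

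For the second assertion I would recall the colouring rule: if $\alpha,\beta\in\Delta$ bound a common $(m+2)$-gon $G$, then the arrow $v_\alpha\to v_\beta$ in $Q_\Delta$ has colour $c$, where $c$ is the number of edges of $\partial G$ strictly between $\alpha$ and $\beta$ as one travels counterclockwise from $\alpha$. Since $\partial G$ has $m+2$ edges, two of which are $\alpha$ and $\beta$, the complementary arc carries the remaining $m-c$ edges, which recovers the stated rule that the reverse arrow $v_\beta\to v_\alpha$ has colour $m-c$. Under $\Phi$ the pair $\alpha,\beta$ still bounds the common $(m+2)$-gon $\Phi(G)$, so $v_\alpha\to v_\beta$ is again an arrow of $Q_{\epsilon(\Delta)}$. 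The essential point is that $\Phi$, interchanging the inner and outer boundary, reverses the counterclockwise sense about the centre $x$, so the counterclockwise arc from $\alpha$ to $\beta$ in $G$ becomes the clockwise arc from $\alpha$ to $\beta$ in $\Phi(G)$. Consequently the colour of $v_\alpha\to v_\beta$ in $Q_{\epsilon(\Delta)}$ is $m-c$, which is exactly the colour this arrow receives in the quiver obtained from $Q_\Delta$ by reversing every arrow. As $\alpha,\beta$ were arbitrary, $Q_{\epsilon(\Delta)}=Q_\Delta^{\op}$.

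The main obstacle is the orientation bookkeeping in the last step: one must verify that the flip genuinely reverses the counterclockwise sense about $x$, reconciling this with the opposite labelling conventions on the two polygons (outer counterclockwise, inner clockwise). I would pin this down either by an explicit model of $\Phi$ in polar coordinates $(r,\theta)\mapsto(a-r,\theta)$ for a suitable constant $a$, whose Jacobian determinant is negative, so that $\Phi$ is orientation-reversing; or, if a cleaner argument is preferred, by checking the colour statement directly on the base angulation $\Delta^0_{p,q,m}$ of Figure \ref{figparticulartriquiv} and then propagating it through mutations by Proposition \ref{mutationcommutes}, using that passing to the opposite quiver commutes with coloured quiver mutation.
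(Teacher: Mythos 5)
Your proposal is correct and follows essentially the same route as the paper: the paper's proof simply observes that $\epsilon(\Delta)$ is clearly an $(m+2)$-angulation of $P_{q,p,m}$ and that each inner $(m+2)$-gon is preserved but turned upside down, so the arrows reverse, which is exactly the orientation-reversal argument you spell out in detail. Your explicit homeomorphism and the bookkeeping showing that the colour $c$ becomes $m-c$ just make precise what the paper leaves as ``easy to see.''
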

\begin{proof}
It is clear that $\epsilon(\Delta)$ is an $(m+2)$-angulation of
$P_{q,p,m}$. Also, it is easy to see that any inner $(m+2)$-gon is
preserved by the flip, but that it changes orientation (it is turned
upside down). Hence the arrows in $Q_{\epsilon(\Delta)}$ are reversed. 
\end{proof}

We know that the mutation classes of $\widetilde{A}_{p,q}$ and
$\widetilde{A}_{q,p}$ are equal up to isomorphism of quivers. 
Suppose $p \neq q$. Let $Q_{\Delta}$ be the quiver consisting of a
cycle with $p$ arrows of colour $0$ clockwise and $q$ arrows of colour
$0$ counterclockwise, i.e. we have fixed the quiver $Q_{\Delta}$ in the
plane. Then we can not reach the quiver $Q_{\epsilon(\Delta)}$ from
the quiver $Q_{\Delta}$ by a finite sequence of mutations. 

If $p=q$, we may have that flipping a triangulation $\Delta$ preserves 
the corresponding quiver up to isomorphism, i.e. $Q_{\Delta}$ may be
isomorphic to $Q_{\epsilon(\Delta)}$. This is not always the
case, but we do have that $Q_{\epsilon(\Delta)}$ can be reached from
$Q_{\Delta}$ by a finite sequence of mutations, since any orientation
of $\widetilde{A}_{p,p}$ is in the mutation class. Let $Q$ be a
coloured quiver. We say that $Q$ is reflection-symmetric if the quiver
obtained from $Q$ by reversing every arrow is isomorphic to
$Q$. Clearly, if $Q_{\Delta}$ is reflection-symmetric then
$Q_{\Delta}$ is isomorphic to $Q_{\epsilon(\Delta)}$. If $Q_{\Delta}$
is isomorphic to $Q_{\epsilon(\Delta)}$, we say that $\Delta$ is
reflection-symmetric. 

For $p \neq q$, we define an equivalence relation on
$\mathcal{T}_{p,q,m}$ by letting two $(m+2)$-angulations $\Delta$ and
$\Delta'$ be equivalent if and only if $\Delta' = r_O^i r_I^j \Delta$
for some integers $i$ and $j$. If $p=q$, we define the two
$(m+2)$-angulations to be equivalent if and only if $\Delta' = r_O^i
r_I^j \epsilon^k \Delta$ for some integers $i$, $j$ and $k \in
\{0,1\}$, where $k=0$ if $\Delta$ is not reflection-symmetric. We
write $(\mathcal{T}_{p,q,m}/\!\! \sim)$ for the class of equivalent 
$(m+2)$-angulations thus obtained. Now, clearly this gives a
map $$\widetilde{\sigma}_{p,q,m}: (\mathcal{T}_{p,q,m}/\!\!\sim)
\rightarrow \mathcal{M}_{p,q,m},$$ induced from $\sigma_{p,q,m}$, and
we claim that this map is bijective. By the discussion above we
already have surjectivity.

Recall that for an $m$-diagonal $\alpha$ in an $(m+2)$-angulation
$\Delta$, we always denote by $v_{\alpha}$ the corresponding vertex in 
$Q_{\Delta}$. If $\Delta$ is an $(m+2)$-angulation and $\alpha$ an
$m$-diagonal, we want to investigate the procedure of factoring out
the corresponding vertex $v_{\alpha}$ in $Q_{\Delta}$. We say that
$\alpha$ is close to the border of the outer polygon if $\alpha$ is an
$m$-diagonal of Type 2 and homotopic to $\delta_{i,m+2}$ for some
$i$, i.e. $\alpha$ is in an $(m+2)$-gon together with edges only on
the border. Similarly $\alpha$ is close to the border of the inner
polygon if $\alpha$ is of Type 3 and homotopic to $\gamma_{i,m+2}$ for
some $i$. Recall that $m$-diagonals close to the border corresponds to
quasi-simple objects in the $m$-cluster category, and all
$m$-diagonals of Type 2 and 3 corresponds to objects in the
tubes. Objects in the transjective components corresponds to
$m$-diagonals of Type 1. 

\begin{lem}\label{existclosetotheborder}
If $\Delta$ is an $(m+2)$-angulation containing an $m$-diagonal
$\alpha$ of Type 2 (Type 3), then there exist an $m$-diagonal close to
the border of the outer (inner) polygon. 
\end{lem}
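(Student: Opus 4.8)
The plan is to choose, among all $m$-diagonals of Type 2 occurring in $\Delta$, one that cuts off the smallest possible region, and to show that this extremal $m$-diagonal is automatically close to the border. Concretely, I would write each Type 2 $m$-diagonal of $\Delta$ in the form $O_{i,km+2}$ with $k\geq 1$, and pick $\alpha=O_{i,km+2}$ with $k$ minimal; such an $\alpha$ exists by hypothesis. The claim is then that $k=1$, which is exactly the assertion that $\alpha$ is close to the border of the outer polygon. For Type 3 the argument is identical after exchanging the roles of the inner and outer polygon (equivalently, after applying the flip $\epsilon$), so it suffices to treat Type 2.

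First I would record the relevant geometric picture. Since $\alpha$ is homotopic to the boundary path $\delta_{i,km+2}$ relative to its endpoints, $\alpha$ together with $\delta_{i,km+2}$ bounds a disk $R$ inside $P_{p,q,m}$ whose boundary consists of $\alpha$ and the $km+1$ boundary edges running through the vertices $O_i,O_{i+1},\ldots,O_{i+km+1}$. Combinatorially $R$ is therefore a $(km+2)$-gon, and in particular it contains no vertex of the inner polygon. Because $\Delta$ has no crossings, every $m$-diagonal of $\Delta$ other than $\alpha$ lies either entirely inside $R$ or entirely outside $R$; and any $m$-diagonal lying inside $R$ joins two of the outer vertices $O_a,O_b$ with $i\leq a<b\leq i+km+1$, hence is again of Type 2 and of the form $O_{a,k'm+2}$. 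Thus $\Delta$ restricts to an $(m+2)$-angulation of the polygon $R$ by Type 2 $m$-diagonals only, exactly as in the Dynkin type $A$ situation of \cite{bm2}.

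Now suppose for contradiction that $k\geq 2$. Then $R$ is a $(km+2)$-gon with $km+2>m+2$ vertices, so it cannot consist of a single $(m+2)$-gon; hence the restriction of $\Delta$ to $R$ must contain at least one $m$-diagonal $\beta$ in the interior of $R$. Writing $\beta=(O_a,O_b)$ with $i\leq a<b\leq i+km+1$, the chord $\beta$ cuts off the subpolygon on the vertices $O_a,O_{a+1},\ldots,O_b$, whose boundary is made up of $b-a$ outer edges together with $\beta$. This forces $\beta=O_{a,(b-a)+1}$ to be a Type 2 $m$-diagonal with $(b-a)+1=k'm+2$ and $k'\geq 1$, while $\beta\neq\alpha$ gives $b-a<km+1$ and hence $k'<k$. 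This contradicts the minimality of $k$. Therefore $k=1$ and $\alpha=O_{i,m+2}$ is close to the border, as required.

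I expect the only delicate point to be the geometric claim in the second paragraph, namely that $\alpha$ genuinely cuts off an embedded disk $R$ that is combinatorially a $(km+2)$-gon containing no inner vertex, and that every $m$-diagonal inside $R$ is of Type 2 with the stated parameter; once this is in place the counting argument is purely combinatorial and mirrors the polygon case. The subtlety is to argue this cleanly even when $\alpha$ has positive winding number, which I would handle by noting that a non-self-crossing representative of the homotopy class of $\delta_{i,km+2}$, together with the corresponding boundary arc, bounds a disk avoiding the central hole.
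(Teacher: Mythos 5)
Your proof is correct and takes essentially the same route as the paper: the paper argues by induction, passing to the smaller polygon cut off by a Type 2 diagonal that is not close to the border, while you phrase the identical descent as an extremal choice (minimal $k$) plus contradiction, both resting on the same observation that the region cut off on the side away from the hole is an $(m+2)$-angulated polygon whose interior diagonals are again of Type 2 with smaller parameter. No gap.
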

\begin{proof}
Suppose $\alpha$ is of Type 2 and not close to the border of the outer
polygon. Then $\alpha$ divides the polygon into two parts $A$ and $B$,
where one part, say $A$, contains the inner polygon. Then $B$ is just
an $(m+2)$-angulation of a regular polygon, so there exist an
$m$-diagonal that divides $B$ into two smaller parts. By induction,
there exist an $m$-diagonal close to the border. The proof for
$m$-diagonals of Type 3 is similar. 
\end{proof}


If $\alpha$ is close to the border of the outer (inner) polygon, we
define $\Delta / \alpha$ to be the $(m+2)$-angulation obtained from
$\Delta$ by letting $\alpha$ be a border edge of the outer (inner)
polygon and leaving all the other $m$-diagonals unchanged. We say that
we factor out $\alpha$. See Figure \ref{figfactoringouterinner}.

  \begin{figure}[htp]
  \begin{center}
    \includegraphics[width=6.7cm]{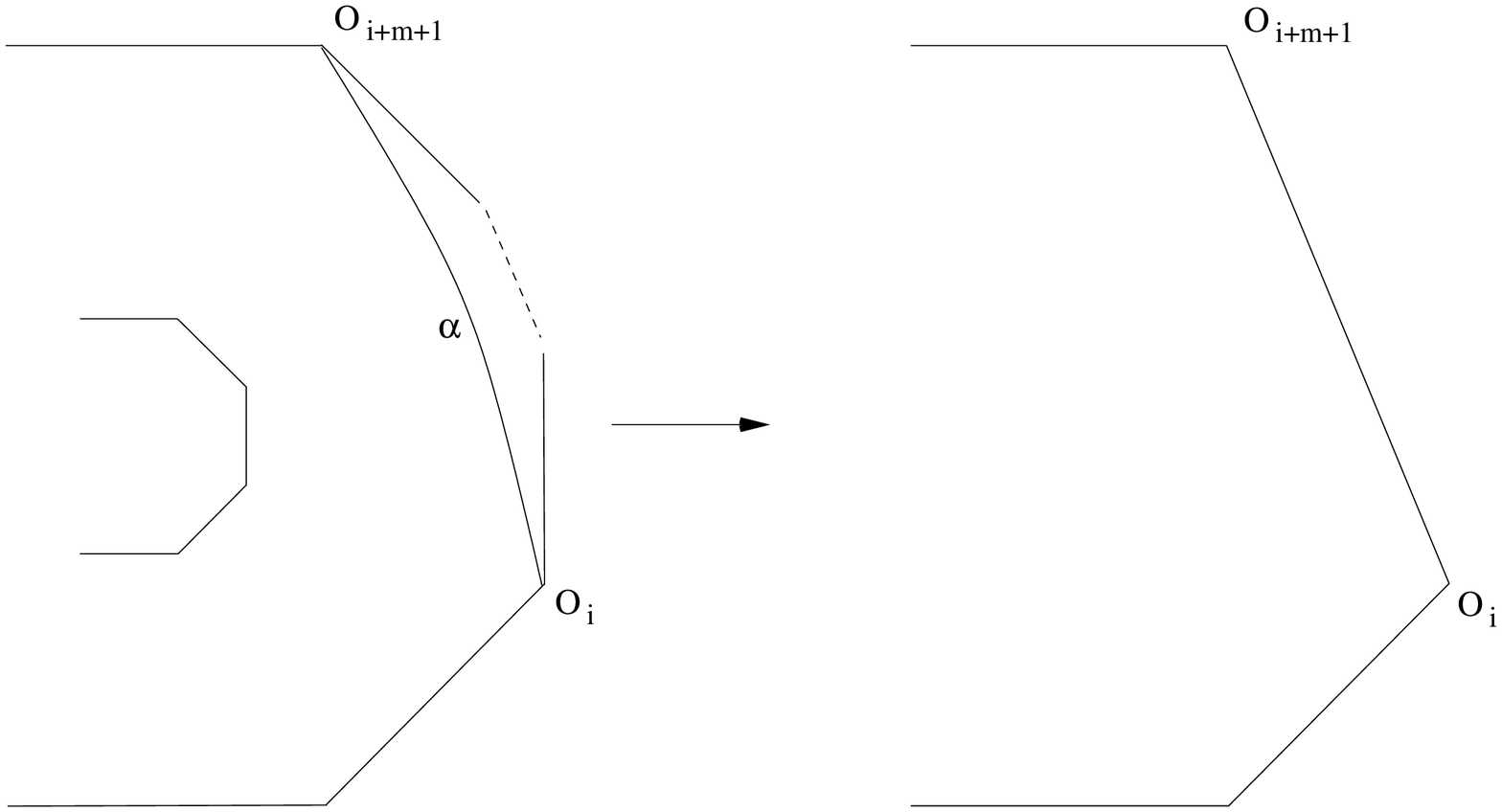}
    \includegraphics[width=6.7cm]{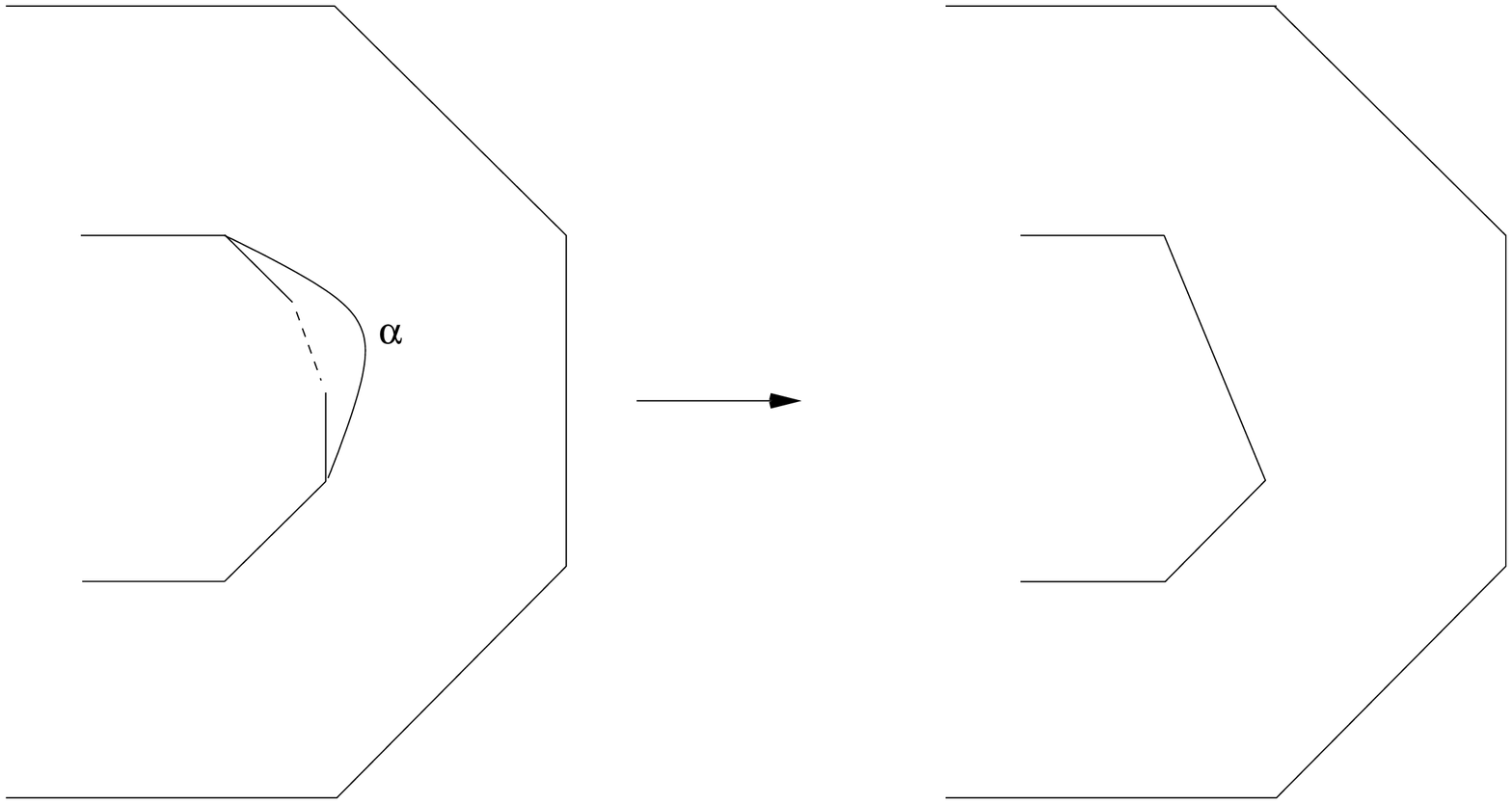}
  \end{center}\caption{\label{figfactoringouterinner} Factoring out a
    diagonal $\alpha$ close to the border of the outer and inner
    polygon, in the case $m=1$.}    
  \end{figure}

\begin{lem}
Let $\Delta$ be an $(m+2)$-angulation of $P_{p,q,m}$ and let $\alpha$
be close to the border of the outer (inner) polygon. Then the quiver
$Q_{\Delta}/v_{\alpha}$, obtained from $Q_{\Delta}$ by factoring out the
vertex $v_{\alpha}$ is connected and of type $\widetilde{A}_{p-1,q}$
($\widetilde{A}_{p,q-1}$). Furthermore, we have that
$Q_{\Delta/\alpha} = Q_{\Delta}/v_{\alpha}$.
\end{lem}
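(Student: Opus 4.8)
The plan is to analyze what happens combinatorially when we factor out an $m$-diagonal $\alpha$ close to the border, tracking both the geometric picture and the corresponding coloured quiver. The statement has two parts: first, that $Q_{\Delta}/v_{\alpha}$ is connected and of the stated extended-Dynkin type, and second, that this factored quiver coincides with the quiver of the factored $(m+2)$-angulation $\Delta/\alpha$. I would handle the second part first, since it essentially reduces to a local observation, and then use it together with Proposition~\ref{numberofdiagonals2} to deduce the first part.

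For the identity $Q_{\Delta/\alpha} = Q_{\Delta}/v_{\alpha}$, I would argue as follows. Suppose $\alpha$ is close to the border of the outer polygon, so $\alpha$ is homotopic to $\delta_{i,m+2}$ and sits in an $(m+2)$-gon $P$ all of whose other edges lie on the outer border. The vertex $v_{\alpha}$ in $Q_{\Delta}$ has arrows only to/from the $m$-diagonals that share the gon $P$ with $\alpha$. Since the other $m$ edges of $P$ are border edges, $\alpha$ bounds $P$ together with exactly one other $m$-diagonal, call it $\beta$ (the unique non-border edge of $P$ besides $\alpha$); so $v_{\alpha}$ is connected in $Q_\Delta$ only to $v_\beta$, say via an arrow of colour $c$ one way and $m-c$ the other. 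When we form $\Delta/\alpha$ by promoting $\alpha$ to a border edge, the gon $P$ is absorbed into the outer polygon, and every other $(m+2)$-gon of $\Delta$ is left untouched; hence every arrow of $Q_\Delta$ not involving $v_\alpha$ is unchanged. So $Q_{\Delta/\alpha}$ is exactly $Q_\Delta$ with $v_\alpha$ and its incident arrows deleted, which is the definition of $Q_\Delta/v_\alpha$. The case of Type~3 near the inner border is symmetric.

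Having established the equality of quivers, I would prove connectedness and the type. By Proposition~\ref{numberofdiagonals2}, $\Delta$ has $p+q$ $m$-diagonals and $\Delta/\alpha$ has $p+q-1$, so $Q_{\Delta}/v_{\alpha}$ has $p+q-1$ vertices. After factoring out $\alpha$ close to the outer border, the resulting figure is exactly an $(m+2)$-angulation of $P_{p-1,q,m}$: the outer polygon has lost the $m$ edges that bounded $P$ but gained $\alpha$ as a single new border edge — in terms of the marked-point count, the $mp$ outer vertices are replaced by $m(p-1)$ (one full $(m+2)$-gon worth of outer boundary, namely $m+1$ outer vertices, collapses with $\alpha$ playing the role of the remaining side), while the inner polygon is unchanged. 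Thus $\Delta/\alpha$ is an $(m+2)$-angulation of $P_{p-1,q,m}$, and by the results of Section~\ref{quiver} its quiver $Q_{\Delta/\alpha}$ is a coloured quiver of $m$-cluster-tilted type $\widetilde{A}_{p-1,q}$, which is in particular connected. Combined with the equality from the previous paragraph, this gives the claim; the Type~3 case yields $\widetilde{A}_{p,q-1}$ identically.

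The main obstacle I anticipate is the bookkeeping of marked points in the factoring step: one must verify carefully that absorbing the single $(m+2)$-gon $P$ into the outer polygon genuinely produces the polygon $P_{p-1,q,m}$ with the correct number $m(p-1)$ of outer marked points, rather than some degenerate configuration, and that the surviving $m$-diagonals remain genuine $m$-diagonals (satisfying the Type~1 congruence condition and the $km+2$ conditions for Types~2,3) relative to the new, relabeled boundary. Once the marked-point count is pinned down, identifying the type of the quiver is immediate from the earlier sections, so the geometric reduction to $P_{p-1,q,m}$ is really the crux of the argument.
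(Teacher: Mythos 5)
The paper does not actually prove this lemma: its ``proof'' is a one-line deferral to the type $A$ argument in \cite{to3}, described as a straightforward adaptation. Your proposal supplies a genuine argument, and its overall structure --- first establish $Q_{\Delta/\alpha}=Q_{\Delta}/v_{\alpha}$ by a local analysis of the affected $(m+2)$-gons, then identify $\Delta/\alpha$ as an $(m+2)$-angulation of $P_{p-1,q,m}$ and invoke Section~\ref{quiver} to get the type and connectedness --- is sound and is presumably close in spirit to what the adaptation of \cite{to3} would look like. The honest flagging of the marked-point bookkeeping as the crux is also apt.

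One local claim in your second paragraph is wrong, though it is not load-bearing. The $(m+2)$-gon $P$ cut off by $\alpha\simeq\delta_{i,m+2}$ has \emph{all} of its other $m+1$ edges on the outer border, so it contains no other $m$-diagonal at all; there is no ``unique non-border edge $\beta$ of $P$ besides $\alpha$.'' The neighbours of $v_{\alpha}$ in $Q_{\Delta}$ come instead from the $(m+2)$-gon on the \emph{other} side of $\alpha$ (the side containing the inner polygon), and that gon may be bounded by several $m$-diagonals, so $v_{\alpha}$ can have more than one neighbour. Fortunately your actual derivation of $Q_{\Delta/\alpha}=Q_{\Delta}/v_{\alpha}$ does not use this claim: it only uses that every $(m+2)$-gon other than $P$ is unchanged when $\alpha$ is promoted to a border edge (the gon adjacent to $\alpha$ on the inner side keeps the same edges and the same counterclockwise segments between its remaining diagonals), so arrows not involving $v_{\alpha}$ persist and arrows at $v_{\alpha}$ are deleted. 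You should simply delete the sentence about the unique $\beta$. Similarly, the parenthetical ``$m+1$ outer vertices collapse'' should read that the $m$ interior vertices $O_{i+1},\dots,O_{i+m}$ of $P$ are removed while the endpoints of $\alpha$ survive, giving $mp-m=m(p-1)$, which is the count you in fact use. With these repairs the argument is complete (modulo the boundary case $p-1=1$, where one should note that the identification of the type still makes sense even though the paper's standing hypothesis is $p\geq 2$).
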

\begin{proof}
We refer to \cite{to3} and the proof there for Dynkin type $A$. The
proof in this case is a straightforward adaption.
\end{proof}

Next we consider factoring out vertices that correspond to
$m$-diagonals not close to the border.

\begin{lem}
Let $\Delta$ be an $(m+2)$-angulation. If we factor out a vertex in
$Q_{\Delta}$ corresponding to an $m$-diagonal not close to the border
and not of Type 1 (an $m$-diagonal between the outer and inner
polygon), then the resulting quiver is disconnected.
\end{lem}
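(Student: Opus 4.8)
The plan is to argue geometrically, exactly as in the proof of Lemma~\ref{existclosetotheborder}: the hypothesis forces $\alpha$ to cut the annulus into two pieces, one of which meets the hole and one of which does not, and I will show that the vertex $v_\alpha$ is the only vertex of $Q_\Delta$ joining the diagonals on one side to those on the other. Since an $m$-diagonal of Type~3 in $P_{p,q,m}$ corresponds under the flip $\epsilon$ to an $m$-diagonal of Type~2 in $P_{q,p,m}$, and $\epsilon$ only reverses arrows (hence preserves connectedness), it suffices to treat the Type~2 case. So assume $\alpha = O_{i,km+2}$ with $k \geq 2$, the condition $k\geq 2$ being exactly what it means for $\alpha$ not to be close to the border.

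First I would record the separation. Because $\alpha$ is homotopic to the boundary path $\delta_{i,km+2}$, it bounds, together with the corresponding arc of the outer polygon, a disk $B \subset P_{p,q,m}$ that contains no point of the inner polygon; write $A$ for the complementary region, which is an annulus containing the inner polygon. As $\Delta$ is non-crossing, every other $m$-diagonal of $\Delta$ lies in the interior of $A$ or in the interior of $B$, and $\Delta$ restricts to an $(m+2)$-angulation of the $(km+2)$-gon $B$ and of $A$. Since $k \geq 2$, the polygon $B$ has more than $m+2$ vertices, so (as in Lemma~\ref{existclosetotheborder}) it contains at least one $m$-diagonal of $\Delta$ besides $\alpha$; and by Proposition~\ref{numberofdiagonals2} the angulation $\Delta$ contains an $m$-diagonal of Type~1, which meets the inner polygon and hence lies in $A$. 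Thus both $A$ and $B$ contain at least one $m$-diagonal of $\Delta$ other than $\alpha$.

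The key observation is then that no $(m+2)$-gon of $\Delta$ straddles $\alpha$: each $(m+2)$-gon lies entirely in $\overline{A}$ or entirely in $\overline{B}$, and $\alpha$ is the only $m$-diagonal lying on their common boundary. Consequently every arrow of $Q_\Delta$ joins two diagonals on the same side, except for the arrows incident to $v_\alpha$, which are the only ones crossing between the $A$-diagonals and the $B$-diagonals. Factoring out $v_\alpha$ deletes this vertex together with all incident arrows, so the induced quiver on the remaining vertices splits into the (nonempty) full subquivers on the $A$-diagonals and on the $B$-diagonals, with no arrows between them; hence it is disconnected. The Type~3 case follows by the flip. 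The main point to get right is the separation claim together with the non-emptiness of both sides --- that $\alpha$ really does bound a hole-free disk $B$ carrying a diagonal of $\Delta$ (using $k\geq 2$) while the inner polygon, and with it a Type~1 diagonal, stays in $A$; once this is in place the disconnectedness is immediate, since factoring out is deletion of a vertex with its incident arrows and creates no new arrows.
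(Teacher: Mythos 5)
Your proof is correct and follows essentially the same route as the paper's: $\alpha$ separates the surface into a hole-free disk $B$ and an annular piece $A$, no $(m+2)$-gon straddles $\alpha$, so the only arrows between the two sides pass through $v_\alpha$ and deleting it disconnects the quiver. The only difference is that you explicitly verify both sides carry a diagonal (via $k\geq 2$ and Proposition \ref{numberofdiagonals2}), a point the paper's proof leaves implicit but which is needed for "disconnected" to be literally true.
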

\begin{proof}
In this case $\alpha$ divides the polygon into two parts $A$ and $B$,
where one part, say $A$, contains the inner polygon. Let $\beta$ be an
$m$-diagonal in $A$ and $\beta'$ an $m$-diagonal in $B$. If $\beta$ and
$\beta'$ would determine a common $(m+2)$-gon, the third $m$-diagonal
would have to cross $\alpha$, hence there is no arrow between the
subquiver determined by $A$ and the subquiver determined by $B$,
except those passing through $v_{\alpha}$. Thus factoring out
$v_{\alpha}$ disconnects the quiver.
\end{proof}

Let $\Delta$ be an $(m+2)$-angulation in $\mathcal{T}_{p,q,m}$. We
know by Lemma \ref{atleastonediagonal} that $\Delta$ contains at least
one $m$-diagonal $\alpha$ of Type 1. We want to define a function on
$\Delta$, which correspond to factoring out the corresponding vertex
$v_{\alpha}$ in the quiver $Q_{\Delta}$, when $\alpha$ is of Type
1. By rotating the outer and inner polygon we can assume that we are
in the situation shown in the first picture in Figure
\ref{figvisualizingfac}, and hence that the winding number of any
diagonal in $\Delta$ is $\leq 1$. We cut the $(m+2)$-angulation along
$\alpha$ as shown in Figure \ref{figvisualizingfac}. We obtain two new
border edges $\alpha'$ and $\alpha''$ in a regular polygon. All the
other diagonals are left unchanged.

  \begin{figure}[htp]
  \begin{center}
    \includegraphics[width=8.5cm]{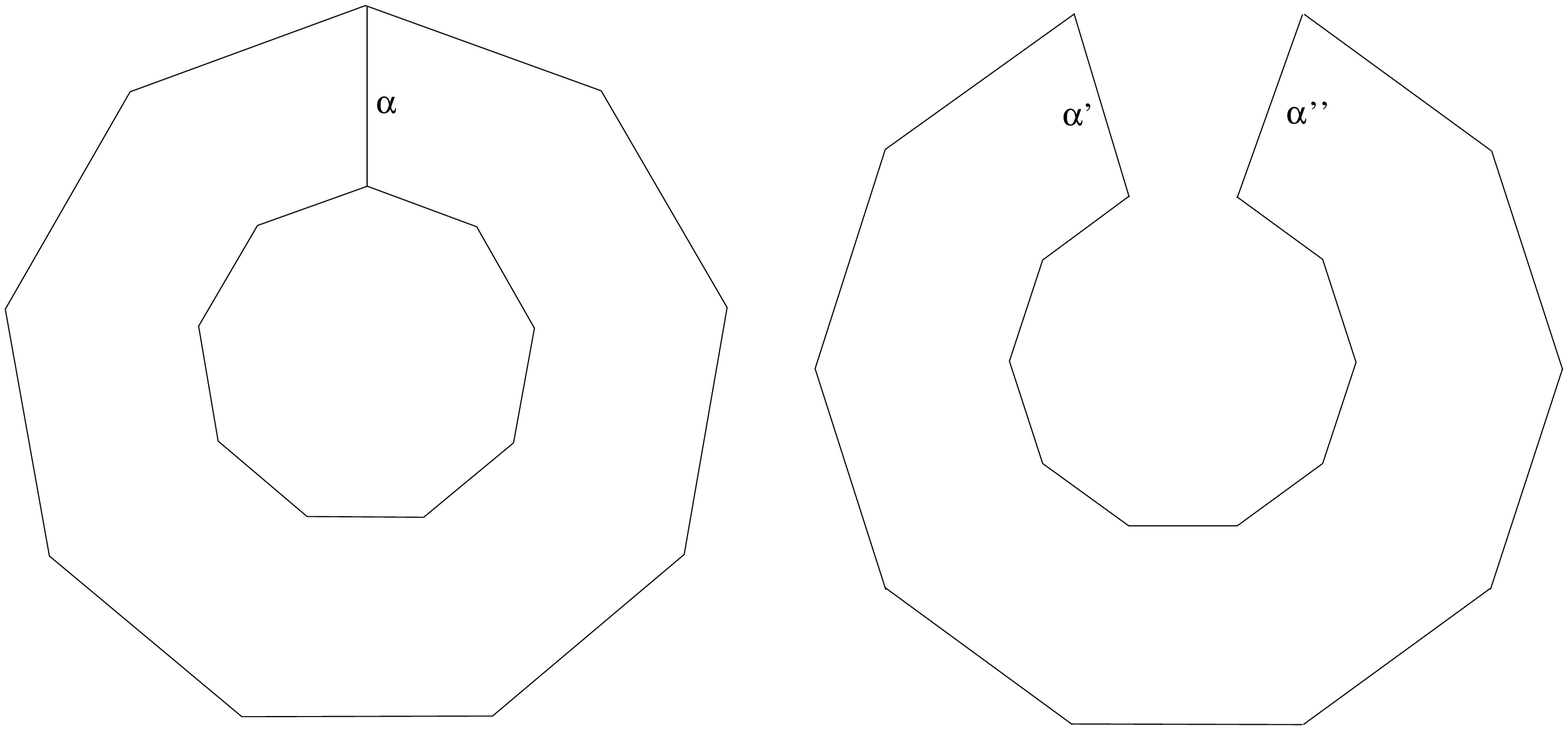}
  \end{center}\caption{\label{figvisualizingfac} Factoring out a
    diagonal $\alpha$ of Type 1.}
  \end{figure}

We have the following results.

\begin{lem}\label{factoringtype1}
Let $\Delta$ be an $(m+2)$-angulation of $P_{p,q,m}$ and let $\alpha$
be an $m$-diagonal of Type 1. Then factoring out $\alpha$ gives an
$(m+2)$-angulation of the regular polygon with $mp+mq+2$ vertices. 
\end{lem}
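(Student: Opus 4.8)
The plan is to argue topologically first and then read off the two combinatorial assertions (the vertex count, and the fact that the faces remain $(m+2)$-gons) almost for free. First I would set up the cut precisely. As recorded just before the statement, after rotating the outer and inner polygons we may assume that every diagonal of $\Delta$ has winding number $\leq 1$, and in particular that the chosen Type 1 diagonal $\alpha$, running from $O_i$ to $I_j$, is embedded. Cutting the annulus $P_{p,q,m}$ along the arc $\alpha$ then produces a genuine disk: severing an annulus along an embedded arc joining its two boundary circles is the standard operation turning the cylinder $S^1 \times [0,1]$ into a disk. Under this cut the arc $\alpha$ reappears as the two boundary edges $\alpha'$ and $\alpha''$ of Figure \ref{figvisualizingfac}, while every other diagonal of $\Delta$ is carried to an embedded chord of the disk.

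Next I would count the boundary vertices of the resulting polygon. The cut doubles precisely the two endpoints of $\alpha$: the vertex $O_i$ splits into the two endpoints of $\alpha'$ and $\alpha''$ lying on the outer arc, and $I_j$ splits likewise on the inner arc, while every other vertex survives exactly once. Hence the outer arc contributes $mp+1$ vertices and the inner arc contributes $mq+1$, for $mp+mq+2$ vertices in total; equivalently, the boundary now consists of the $mp$ outer edges, the $mq$ inner edges, and the two new edges $\alpha'$ and $\alpha''$, again $mp+mq+2$ in number. This already gives the claimed polygon with $mp+mq+2 = m(p+q)+2$ vertices.

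Finally I would check that the remaining diagonals still cut this polygon into $(m+2)$-gons. The diagonal $\alpha$ is an interior edge of the $(m+2)$-angulation $\Delta$, so it bounds exactly two of its $(m+2)$-gons, one on each side, and every other $(m+2)$-gon of $\Delta$ is disjoint from $\alpha$. After cutting, the two faces meeting $\alpha$ retain all their edges, with the shared copy of $\alpha$ replaced by the boundary edge $\alpha'$, respectively $\alpha''$, so each stays an honest $(m+2)$-gon, and all other faces are untouched. Thus the cut preserves the entire collection of $(m+2)$-gons of $\Delta$ and merely reinterprets $\alpha$ as two boundary edges, so the surviving $p+q-1$ diagonals (the $p+q$ of Proposition \ref{numberofdiagonals2} with $\alpha$ removed) divide the disk into $(m+2)$-gons, which is exactly an $(m+2)$-angulation of the $(m(p+q)+2)$-gon. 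The one point that genuinely needs care---and hence the main obstacle---is the topological claim that $\alpha$ is incident to two \emph{distinct} faces which open up into honest polygons, rather than being self-glued or wrapped around the hole; this is precisely what the normalization to winding number $\leq 1$ secures, since it forces $\alpha$ to be embedded and to locally separate the annulus into two sides.
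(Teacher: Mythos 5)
Your proof is correct, but it goes by a different route than the paper's. The paper's proof is a pure counting argument: it notes that the cut-open polygon has $mp+mq+2=m(p+q)+2$ vertices, invokes the fact (from \cite{bm2}) that an $(m+2)$-angulation of an $(mn+2)$-gon has exactly $n-1$ $m$-diagonals, and concludes from the fact that exactly $p+q-1$ diagonals survive the cut that they must form an $(m+2)$-angulation. You instead track the faces directly through the cut: $\alpha$ bounds two faces of $\Delta$, each of which reappears in the disk as an honest $(m+2)$-gon with $\alpha$ replaced by $\alpha'$ or $\alpha''$, and every other face is untouched. Your version is more self-contained and actually supplies the step the paper leaves implicit --- namely why having the right \emph{number} of non-crossing diagonals guarantees that the faces are $(m+2)$-gons --- at the cost of having to rule out the degenerate possibility that $\alpha$ is self-glued or bounds the same face on both sides (which you flag, and which is excluded here since $p,q\geq 2$ and there are no punctures). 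The paper's version is shorter but leans on the external maximality fact. Both are acceptable; yours is arguably the more complete argument for this particular lemma.
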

\begin{proof}
Clearly the resulting polygon has $mp+mq+2 = m(p+q)+2$ vertices. It is
known, for example by \cite{bm2}, that any $(m+2)$-angulation of the
regular polygon with $mn+2$ vertices has exactly $n-1$ $m$-diagonals.
There are exactly $p+q-1$ diagonals left after factoring out $\alpha$,
hence we have obtained an $(m+2)$-angulation of the regular polygon
with $m(p+q)+2$ vertices. 
\end{proof}

\begin{lem}
Let $\Delta$ be an $(m+2)$-angulation of $P_{p,q,m}$. If $\alpha$ is a 
diagonal of Type 1, then the quiver $Q_{\Delta}/v_{\alpha}$ is
connected and of Dynkin type $A_{p+q-1}$. Furthermore, factoring out
$\alpha$ corresponds to factoring out the corresponding vertex in
$Q_{\Delta}$, i.e. $Q_{\Delta / \alpha} = Q_\Delta / v_{\alpha}$.
\end{lem}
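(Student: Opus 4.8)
The plan is to reduce everything to the regular-polygon theory via the geometric cut along $\alpha$. By Lemma \ref{factoringtype1}, factoring out the Type 1 diagonal $\alpha$ turns $\Delta$ into an $(m+2)$-angulation $\Delta/\alpha$ of the regular polygon with $m(p+q)+2$ vertices. The coloured quiver $Q_{\Delta/\alpha}$ is defined exactly as in the regular-polygon case of \cite{bm2,to3}, so by that theory it is of Dynkin type $A_{p+q-1}$. Connectedness is immediate: cutting along a Type 1 diagonal opens the annulus into a single disk without separating it, so the dual graph of $\Delta/\alpha$, and hence $Q_{\Delta/\alpha}$, is connected --- in contrast to the previous lemma, where a diagonal not of Type 1 separates the annulus into two pieces and disconnects the quiver. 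This settles the first sentence of the statement, once the identity $Q_{\Delta/\alpha}=Q_\Delta/v_\alpha$ is established.

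The bulk of the work is the equality $Q_{\Delta/\alpha}=Q_\Delta/v_\alpha$, which I would prove by a purely local analysis at $\alpha$. Since $\alpha$ is of Type 1, it bounds exactly two $(m+2)$-gons $G_1$ and $G_2$, one on each side. Every $(m+2)$-gon of $\Delta$ disjoint from $\alpha$ survives unchanged in $\Delta/\alpha$, so all arrows of $Q_\Delta$ neither incident to $v_\alpha$ nor internal to $G_1$ or $G_2$ are untouched. The only diagonals whose mutual arrows can change are those flanking $\alpha$ inside $G_1$ and inside $G_2$. I would write $\gamma,\beta$ for the two diagonals flanking $\alpha$ along the boundary of $G_1$, with arrows $v_\gamma\to v_\alpha$ of colour $c_1$ and $v_\alpha\to v_\beta$ of colour $c_2$ in $Q_\Delta$, and similarly for $G_2$.

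The key computation is the colour bookkeeping. After the cut, $\alpha$ becomes a boundary edge $\alpha'$ of $G_1$, so $\gamma$ and $\beta$ now flank this boundary edge and bound the common $(m+2)$-gon $G_1$ directly; the counterclockwise boundary segment from $\gamma$ to $\beta$ is the concatenation of the segment from $\gamma$ to $\alpha$ with the segment from $\alpha$ to $\beta$, so the new arrow $v_\gamma\to v_\beta$ in $Q_{\Delta/\alpha}$ has colour $c_1+c_2$. This is exactly the outcome of factoring out the vertex $v_\alpha$ in the sense of \cite{to3}: delete $v_\alpha$ and replace each length-two path $v_\gamma\to v_\alpha\to v_\beta$ through a common $(m+2)$-gon by a single arrow whose colour is the sum. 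Carrying out this check for both $G_1$ and $G_2$, together with the complementary arrows of colour $m-c_1$ and $m-c_2$ dictated by the colour-symmetry rule of \cite{bt}, yields $Q_{\Delta/\alpha}=Q_\Delta/v_\alpha$.

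The main obstacle I anticipate is precisely this last colour bookkeeping: confirming that summing colours along the broken boundary of each $(m+2)$-gon matches the formal factoring operation of \cite{to3} in every configuration --- in particular when $\gamma=\beta$, so that the reconnection produces a loop that must be cancelled, or when one of $G_1,G_2$ already has a boundary edge adjacent to $\alpha$, so that fewer new arrows are created. Since the previous lemma for border diagonals was handled by direct reference to \cite{to3}, I expect these cases to be a straightforward, if tedious, adaptation of that argument.
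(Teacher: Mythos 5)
Your first paragraph is fine and matches the intended reasoning: Lemma \ref{factoringtype1} turns $\Delta/\alpha$ into an $(m+2)$-angulation of the regular polygon with $m(p+q)+2$ vertices, so its quiver is connected of type $A_{p+q-1}$ by the regular-polygon theory. The problem is your treatment of the identity $Q_{\Delta/\alpha}=Q_\Delta/v_\alpha$, which rests on a wrong model of both operations. Factoring out the vertex $v_\alpha$ (here and in the cited type-$A$ setting) is plain vertex deletion: one removes $v_\alpha$ and all arrows incident to it and keeps the full subquiver on the remaining vertices; there is no ``replace each length-two path $v_\gamma\to v_\alpha\to v_\beta$ by an arrow whose colour is the sum'' step. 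This is forced by the surrounding statements: the next lemma says that factoring out a Type 2 or 3 diagonal not close to the border \emph{disconnects} the quiver, which could not happen if paths through the deleted vertex were reconnected.

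Your geometric bookkeeping is also off. Since $\gamma$, $\alpha$, $\beta$ all lie on the boundary of the same $(m+2)$-gon $G_1$, the arrow $v_\gamma\to v_\beta$ is \emph{already present} in $Q_\Delta$; it is not created by the cut. Its colour is $c_1+c_2+1$ (the segment from $\gamma$ to $\beta$ counterclockwise consists of the $c_1$ edges before $\alpha$, the edge $\alpha$ itself, and the $c_2$ edges after it), not $c_1+c_2$, and the cut leaves this colour unchanged because it merely replaces the edge $\alpha$ of $G_1$ by a border edge $\alpha'$ with the same combinatorial role. So there are no new arrows, no loops to cancel, and no case analysis: every $(m+2)$-gon of $\Delta$ survives as an $(m+2)$-gon of $\Delta/\alpha$ (the two adjacent to $\alpha$ now having a border edge where $\alpha$ was), hence all arrows between vertices other than $v_\alpha$ are literally identical in $Q_{\Delta/\alpha}$ and $Q_\Delta$, and the only change is the disappearance of $v_\alpha$ and its arrows. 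That one-line observation is the paper's entire proof of the second assertion; your composition-of-arrows argument, taken literally, would produce a different quiver from $Q_{\Delta/\alpha}$ and so does not establish the claimed equality.
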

\begin{proof}
Factoring out $\alpha$ does not affect the inner $(m+2)$-gons in
$\Delta$, and so hence the arrows between vertices not equal to
$v_\alpha$ stay the same. The arrows from and to $v_\alpha$ are
removed. 
\end{proof}

We note that this procedure is reversible. Let $\Delta'$ be an
$(m+2)$-angulation of a regular polygon with $mp+mq+2$
vertices. Suppose we want an $(m+2)$-angulation of $P_{p,q}$. Pick any
border edge $e$. Then there are two possible border edges we can can
identify with $e$ to obtain an $(m+2)$-angulation of $P_{p,q}$.

Summarizing we obtain the following proposition.

\begin{prop}\label{iffdiagonals}
Let $\Delta$ be an $(m+2)$-angulation of $P_{p,q,m}$ and $\alpha$ an
$m$-diagonal. Then the $m$-coloured quiver $Q_{\Delta}/v_{\alpha}$ is
connected and of Dynkin type $A_{p+q-1}$ if and only if $\alpha$ is of
Type 1. Also $Q_{\Delta}/v_{\alpha}$ is connected and of Dynkin type
$\widetilde{A}_{p-1,q}$ ($\widetilde{A}_{p,q-1}$) if and only if
$\alpha$ is close to the border of the outer (inner) polygon. Else
$Q_{\Delta}/v_{\alpha}$ is disconnected. 
\end{prop}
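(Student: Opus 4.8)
The plan is to assemble this proposition directly from the three preceding lemmas, since each of them already establishes one of the four cases in exactly the form needed. The statement is a classification of what the quiver $Q_{\Delta}/v_{\alpha}$ looks like depending on the type of the $m$-diagonal $\alpha$, and the work has essentially been distributed across the lemmas on factoring out diagonals of Type 1, diagonals close to the border, and diagonals not close to the border and not of Type 1. So the proof is largely an organizational argument establishing the biconditionals from the one-directional implications supplied by the lemmas.

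First I would recall that every $m$-diagonal falls into exactly one of three mutually exclusive situations: it is of Type 1, or it is of Type 2/3 and close to the border (homotopic to $\delta_{i,m+2}$ or $\gamma_{i,m+2}$), or it is of Type 2/3 and not close to the border. The lemma on Type 1 diagonals gives that in that case $Q_{\Delta}/v_{\alpha}$ is connected of Dynkin type $A_{p+q-1}$; the lemma on diagonals close to the border gives connected of type $\widetilde{A}_{p-1,q}$ (resp. $\widetilde{A}_{p,q-1}$); and the lemma on diagonals not close to the border and not of Type 1 gives that the quiver is disconnected. Since the three geometric conditions partition all $m$-diagonals, and the three resulting quiver-descriptions are pairwise distinct (Dynkin $A$, extended Dynkin $\widetilde{A}$, and disconnected are mutually exclusive as isomorphism classes of quivers), each forward implication automatically upgrades to a biconditional: if $Q_{\Delta}/v_{\alpha}$ is connected of Dynkin type $A_{p+q-1}$ then $\alpha$ cannot be close to the border (that would force $\widetilde{A}$) nor not-close-and-not-Type-1 (that would force disconnected), so $\alpha$ must be of Type 1; and symmetrically for the other cases.

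The main point to be careful about is precisely this mutual exclusivity of the three conclusions as graph-isomorphism invariants, which is what lets one reverse each implication. A connected quiver of Dynkin type $A_{p+q-1}$ is never isomorphic to a connected quiver of extended Dynkin type $\widetilde{A}_{p-1,q}$ (these have different numbers of vertices and different underlying graphs — $A$ is a tree, $\widetilde{A}$ contains a cycle), and a disconnected quiver is certainly neither. I expect this to be the only genuine obstacle, and it is a mild one: it is a standard fact that the underlying graphs $A_n$, $\widetilde{A}_{r,s}$ are pairwise non-isomorphic, so the three target descriptions cannot coincide. Once that is noted, the contrapositive/partition argument closes the proposition with no further computation.

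\begin{proof}
Every $m$-diagonal $\alpha$ is either of Type 1, or of Type 2 or 3 and close to the border, or of Type 2 or 3 and not close to the border; these three cases are mutually exclusive and exhaustive. By the lemma on factoring out $m$-diagonals of Type 1, in the first case $Q_{\Delta}/v_{\alpha}$ is connected of Dynkin type $A_{p+q-1}$. By the lemma on factoring out $m$-diagonals close to the border, in the second case $Q_{\Delta}/v_{\alpha}$ is connected of type $\widetilde{A}_{p-1,q}$ or $\widetilde{A}_{p,q-1}$. By the lemma on factoring out $m$-diagonals not close to the border and not of Type 1, in the third case $Q_{\Delta}/v_{\alpha}$ is disconnected. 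The three conclusions are pairwise incompatible as quivers: a connected quiver cannot be disconnected, and the connected underlying graphs $A_{p+q-1}$ and $\widetilde{A}_{r,s}$ are non-isomorphic, the former being a tree and the latter containing a cycle. Hence each forward implication reverses. Indeed, if $Q_{\Delta}/v_{\alpha}$ is connected of Dynkin type $A_{p+q-1}$, then $\alpha$ is neither close to the border (which would force an $\widetilde{A}$ quiver) nor not-close-and-not-Type-1 (which would force a disconnected quiver), so $\alpha$ is of Type 1. Symmetrically, if $Q_{\Delta}/v_{\alpha}$ is connected of type $\widetilde{A}_{p-1,q}$ ($\widetilde{A}_{p,q-1}$), then $\alpha$ is close to the border of the outer (inner) polygon; and in all remaining cases $Q_{\Delta}/v_{\alpha}$ is disconnected. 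This establishes all the stated equivalences.
\end{proof}
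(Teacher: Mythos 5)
Your proposal is correct and matches the paper's approach: the paper offers no separate proof, introducing the proposition with ``Summarizing we obtain the following proposition,'' i.e.\ it is exactly the assembly of the three preceding lemmas that you carry out. Your explicit remark that the three conclusions (connected of type $A_{p+q-1}$, connected of type $\widetilde{A}$, disconnected) are pairwise incompatible, which is what upgrades the forward implications to biconditionals, is the right observation and is left implicit in the paper.
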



Let $\Delta$ be an $(m+2)$-angulation of $P_{p,q,m}$, and let $e$ be a
border edge between $i$ and $j$ on the outer or inner polygon. We
define another $(m+2)$-angulation $\Delta(e)$ of $P_{p+1,q,m}$ (if $e$
is on the outer polygon) or $P_{p,q+1,m}$ (if $e$ is on the inner
polygon), called the extension of $\Delta$ at $e$. The new
$(m+2)$-angulation $\Delta(e)$ is obtained from $\Delta$ by adding $m$
new border vertices between $i$ and $j$ and letting $e$ be an
$m$-diagonal close to the border in $\Delta(e)$. See Figure
\ref{figextending}.

  \begin{figure}[htp]
  \begin{center}
    \includegraphics[width=8.5cm]{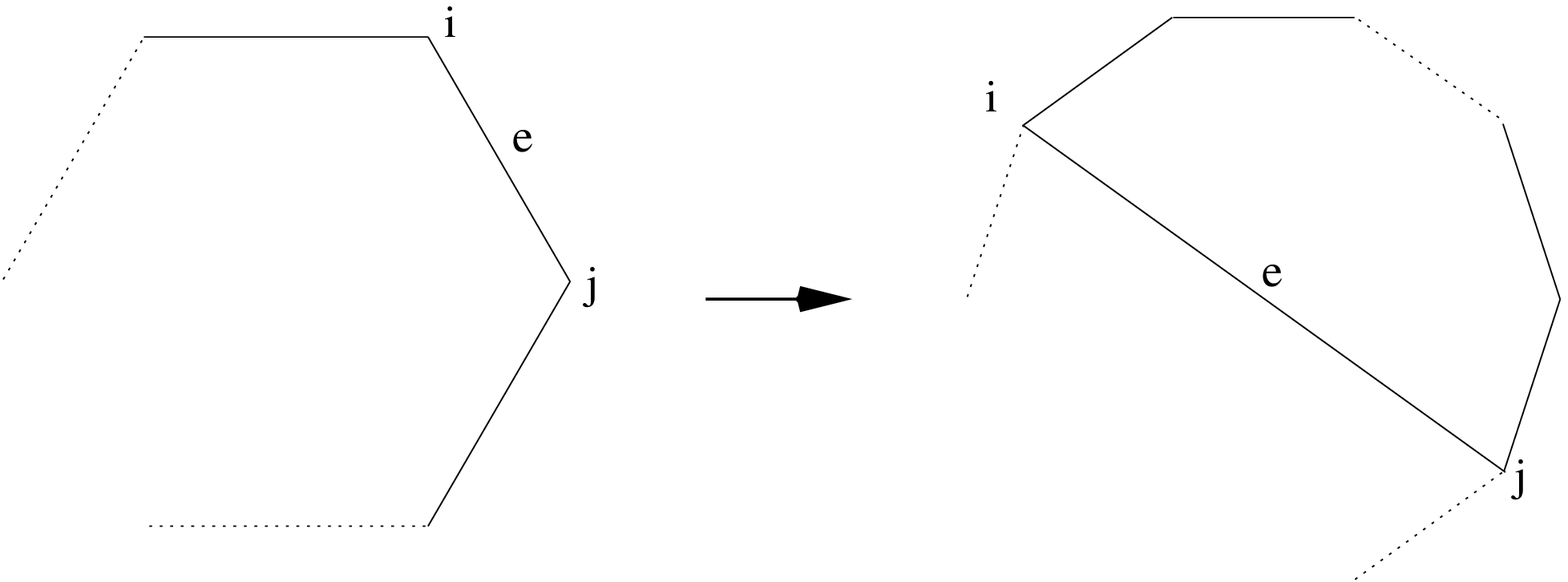}
    \includegraphics[width=8.5cm]{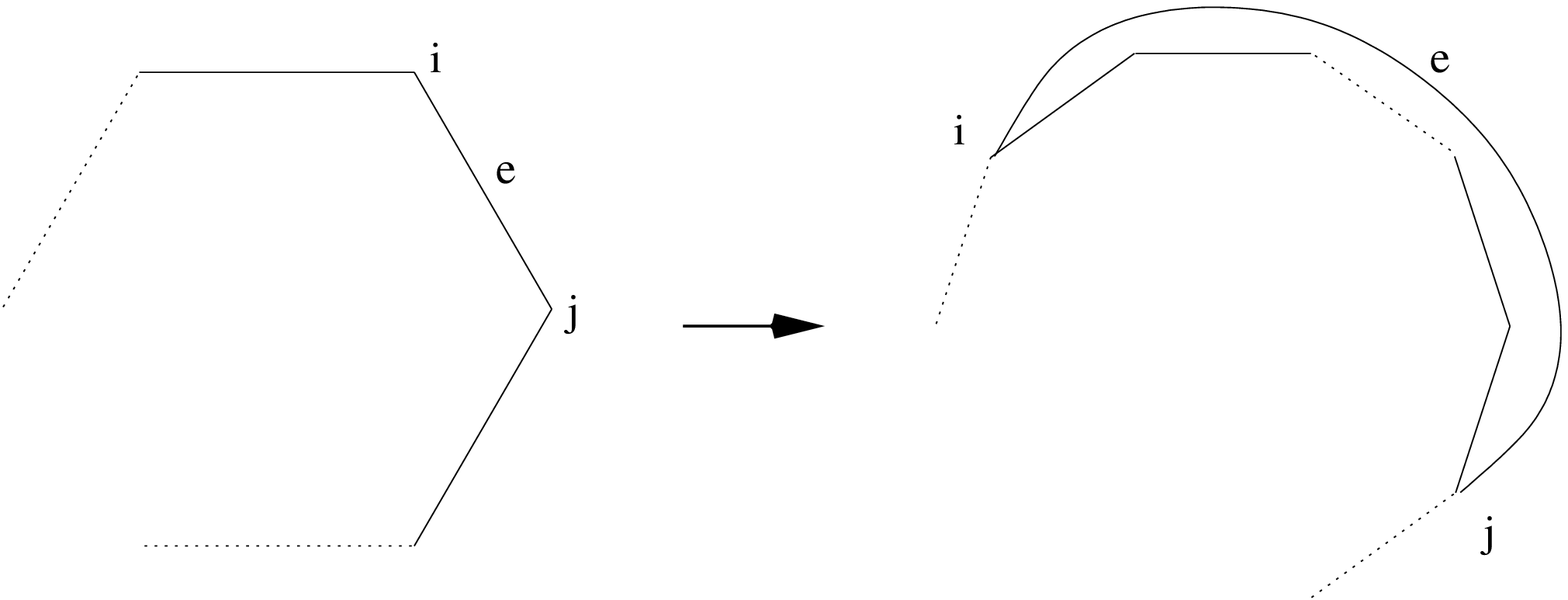}
  \end{center}\caption{\label{figextending} Extension of $\Delta$
    at the border edge $e$ on the outer and inner polygon respectively.}   
  \end{figure}

Now we are ready to prove the bijection between
$\mathcal{T}_{p,q,m}/\!\!\sim$ and $\mathcal{M}_{p,q,m}$. We look at
two different cases, namely the case when $(m+2)$-angulations only
contain $m$-diagonals of Type 1 and the case when $(m+2)$-angulations
contain at least one $m$-diagonal of Type 2 or 3.

Suppose $\Delta$ is an $(m+2)$-angulation that contains only
$m$-diagonals of Type 1, i.e. $m$-diagonals between the outer and
inner polygon. Then the corresponding coloured quiver $Q_{\Delta}$ is
a cycle of length $p+q$, with arrows of possibly different colours
going both ways. Given such a quiver $Q$, we want to show that there
is a unique $(m+2)$-angulation in $\mathcal{T}_{p,q,m}/\!\!\sim$ that
maps to $Q$. 

\begin{lem}\label{onlytype1}
Let $Q$ be a coloured quiver consisting of a cycle. If $\Delta$ and
$\Delta'$ are two $(m+2)$-angulations such that
$\sigma_{p,q,m}(\Delta) = \sigma_{p,q,m}(\Delta') = Q$, then $\Delta =
\Delta'$ in $(\mathcal{T}_{p,q,m}/\!\!\sim)$.  
\end{lem}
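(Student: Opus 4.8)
The plan is to prove the stronger fact that a cyclic quiver forces both $\Delta$ and $\Delta'$ to consist only of $m$-diagonals of Type 1, and then to reconstruct such an angulation from the colour sequence of $Q$ up to the rotations $r_O, r_I$ (and, when $p=q$, the flip $\epsilon$).

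First I would show that if $\sigma_{p,q,m}(\Delta)=Q$ is a cycle then $\Delta$ has only Type 1 diagonals. By Proposition \ref{numberofdiagonals2}, $\Delta$ has $p+q$ $m$-diagonals. Counting incidences of edges in $(m+2)$-gons, each of the $mp+mq$ border edges lies in one gon while each $m$-diagonal lies in two, so if $G$ is the number of $(m+2)$-gons then $(m+2)G = 2(p+q)+m(p+q)$, giving $G=p+q$; writing $d_g$ for the number of diagonal sides of the gon $g$, the same count gives $\sum_g d_g = 2(p+q)$. A gon with $d_g \ge 3$ produces a cycle of length $d_g$ among its diagonal sides in $Q_{\Delta}$, which is impossible in a simple cycle of length $p+q$ unless that gon meets all $p+q$ diagonals and every remaining gon is an ear ($d_g=1$); but then $\sum_g d_g = (p+q)+(p+q-1) = 2(p+q)-1$, a contradiction. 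Hence $d_g\le 2$ for all $g$, and since the $d_g\ge 1$ sum to $2(p+q)$ over $p+q$ gons we get $d_g=2$ for every gon. In particular there are no ears, so by Lemma \ref{existclosetotheborder} there is no $m$-diagonal of Type 2 or 3; thus $\Delta$, and likewise $\Delta'$, consists only of Type 1 diagonals.

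Next I would reconstruct such a $\Delta$ from $Q$. Reading the diagonals $\alpha_1,\dots,\alpha_{p+q}$ in cyclic order around the annulus, consecutive ones cut out a gon with $a_k$ outer and $b_k = m-a_k$ inner border edges, and the colour of the arrow $v_{\alpha_k}\to v_{\alpha_{k+1}}$ records one of $a_k,b_k$ while the reverse arrow records the other, matching the conventions of Section \ref{quiver}. Writing $\alpha_k$ from $O_{x_k}$ to $I_{y_k}$, advancing counterclockwise gives $x_{k+1}=x_k+a_k$ and $y_{k+1}=y_k\pm b_k$ (the sign fixed by the clockwise labelling of the inner polygon); since $\sum_k a_k = mp$ and $\sum_k b_k = mq$ the endpoints return to $\alpha_1$ after one turn. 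Hence, once the position $(x_1,y_1)$ of one chosen diagonal is fixed, the colour sequence determines every $\alpha_k$ together with its winding, and therefore all of $\Delta$. Using $r_O$ and $r_I$, which act independently on the outer and inner endpoints and are defined on all diagonals in this section, I may normalize so that the chosen diagonal runs from $O_0$ to $I_0$.

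Finally I would compare $\Delta$ and $\Delta'$ through a coloured-quiver isomorphism $\phi\colon Q_{\Delta}\to Q_{\Delta'}$, which identifies the two cyclic colour sequences up to a cyclic shift and possibly a reversal of orientation. If $\phi$ preserves orientation, then after normalizing the chosen diagonal of $\Delta$ and its $\phi$-image in $\Delta'$ to $O_0$--$I_0$ the colour sequences coincide, so by the reconstruction the normalized angulations are literally equal and $\Delta'=r_O^i r_I^j\Delta$. If $\phi$ reverses orientation, the two boundaries are interchanged, so $Q_{\Delta'}$ is isomorphic to the arrow-reversal of $Q_{\Delta}$, which by the lemma on the flip $\epsilon$ equals $Q_{\epsilon(\Delta)}$; when $p\ne q$ this cannot happen, since the total colour around the cycle in the two directions is $mp$ and $mq$ and these differ, so only the orientation-preserving case occurs. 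When $p=q$, the orientation-preserving case applied to $\Delta'$ and $\epsilon(\Delta)$ yields $\Delta'=r_O^i r_I^j\epsilon(\Delta)$, and the chain $Q_{\Delta}\cong Q_{\Delta'}\cong Q_{\epsilon(\Delta)}$ shows $\Delta$ is reflection-symmetric, so the flip is permitted in $\sim$; in every case $\Delta\sim\Delta'$. The main obstacle I expect is precisely this orientation-reversing case: keeping the endpoint/colour bookkeeping consistent under the flip and confirming that a reversal can occur only when $p=q$ and exactly when $\Delta$ is reflection-symmetric, so that it is accounted for by the $\epsilon^k$ in the definition of $\sim$.
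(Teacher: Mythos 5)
Your proposal is correct and follows essentially the same route as the paper's (sketched) proof: both reconstruct the Type 1 diagonals one at a time from the cyclic colour sequence, starting from a normalized diagonal, and both dispose of the orientation-reversing case via the flip $\epsilon$ when $p=q$. You additionally make explicit two points the paper leaves implicit --- the edge/gon count showing that a cyclic quiver forces every diagonal to be of Type 1, and the observation that the colour sums $mp$ and $mq$ around the cycle obstruct an orientation-reversing isomorphism when $p\neq q$ --- and both of these are sound.
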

\begin{proof} We sketch a proof.
Start with an $m$-diagonal $\alpha$ between $I_i$ and $O_j$. Choose a
vertex $v_{\alpha}$ in $Q$ corresponding to $\alpha$. Suppose there is
an arrow $r$ from $v_{\alpha}$ to $v_{\beta}$ of colour $c$. Then $\beta$
is either a diagonal between $I_{i+m-c}$ and $O_{j+c}$ if $r$
goes counterclockwise or a diagonal between $I_{i+c}$ and $O_{j+m-c}$
if $r$ goes clockwise. By induction we can continue. If $p =
q$ and if the quiver obtained from $Q$ by reversing all arrows is
isomorphic to $Q$, then the corresponding $(m+2)$-angulations are
flips and rotations of eachother.
\end{proof}

For the case when the $(m+2)$-angulation contains $m$-diagonals of
Type 2 or 3, we need two more lemmas. 

\begin{lem}\label{equivrot}
Let $\Delta$ be an $(m+2)$-angulation. Suppose that $Q_{\Delta} \simeq
Q_{\Delta'}$ for some $(m+2)$-angulation $\Delta'$ implies $\Delta =
\Delta'$ in $(\mathcal{T}_{p,q,m}/\!\!\sim)$. 

Let $\alpha$ be an $m$-diagonal in $\Delta$. Suppose there is an
isomorphism $Q_{\Delta} \stackrel{\theta}{\rightarrow} Q_{\Delta'}$ that
sends $v_{\alpha}$ to $v_{\alpha'}$. Then $\alpha' = r_O^i r_I^j
\epsilon^k \alpha$ and $\Delta' = r_O^i r_I^j \epsilon^k \Delta$ for
some integers $i$, $j$ and $k \in \{0,1\}$, where $k=0$ if $Q_\Delta$
is not reflection-symmetric.
\end{lem}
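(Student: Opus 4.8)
The plan is to reduce the general statement to the already-assumed hypothesis by using the factoring-out machinery developed in Proposition \ref{iffdiagonals}. The key observation is that the isomorphism $\theta: Q_\Delta \to Q_{\Delta'}$ sends $v_\alpha$ to $v_{\alpha'}$, and factoring out a vertex is an operation that is preserved under quiver isomorphism. Concretely, $\theta$ induces an isomorphism $Q_\Delta/v_\alpha \simeq Q_{\Delta'}/v_{\alpha'}$. By Proposition \ref{iffdiagonals}, the isomorphism type of the factored quiver detects the type of the $m$-diagonal: it is Dynkin $A_{p+q-1}$ exactly when the diagonal is of Type 1, extended Dynkin $\widetilde{A}_{p-1,q}$ (resp. $\widetilde{A}_{p,q-1}$) exactly when it is close to the border of the outer (resp. inner) polygon, and disconnected otherwise. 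Hence $\alpha$ and $\alpha'$ must be of the same type, and in the connected extended-Dynkin cases $\theta$ descends to an isomorphism $Q_{\Delta/\alpha} \simeq Q_{\Delta'/\alpha'}$ between quivers of strictly smaller type.

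First I would treat the case where $\alpha$ is close to the border of the outer polygon, so that $Q_\Delta/v_\alpha$ is of type $\widetilde{A}_{p-1,q}$. Using the lemma identifying $Q_{\Delta/\alpha} = Q_\Delta/v_\alpha$, the induced isomorphism $Q_{\Delta/\alpha} \simeq Q_{\Delta'/\alpha'}$ together with the hypothesis of the lemma (applied inductively on $p+q$, the smaller polygon satisfying the inductive assumption) gives $\Delta/\alpha = r_O^i r_I^j \epsilon^k (\Delta'/\alpha')$ in the quotient. I would then lift this back up along the extension operation $\Delta(e)$ described before the statement: since factoring out and extending at a border edge are mutually inverse, and since the rotations $r_O, r_I$ and the flip $\epsilon$ all commute appropriately with factoring out a border diagonal (the position of $\alpha$ determines the border edge $e$ to re-attach), the equality descends back to $\Delta' = r_O^i r_I^j \epsilon^k \Delta$ with $\alpha' = r_O^i r_I^j \epsilon^k \alpha$. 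The case of Type 3 close to the inner border is symmetric, using $\widetilde{A}_{p,q-1}$.

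The remaining case is $\alpha$ of Type 1, where $Q_\Delta/v_\alpha$ is Dynkin $A_{p+q-1}$. Here I would factor out $\alpha$ as in Lemma \ref{factoringtype1}, obtaining an $(m+2)$-angulation of a regular $(m(p+q)+2)$-gon whose quiver is $Q_\Delta/v_\alpha$, and invoke the known Dynkin-$A$ bijection result from \cite{to3}: the isomorphism $\theta$ restricts to an isomorphism of these polygon $(m+2)$-angulations up to rotation. Reversing the cut (the reattachment is determined up to the two choices of border edge to glue, which correspond precisely to the freedom in $r_O^i r_I^j$) then produces the required relation between $\Delta$ and $\Delta'$. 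The main obstacle I expect is bookkeeping the precise integers $i,j$ and the flip index $k$: I must verify that the rotation/flip witnessing equivalence on the smaller (factored) configuration lifts to the \emph{same} rotation/flip on $P_{p,q,m}$, i.e.\ that $\theta$'s behavior on $v_\alpha$ pins down $\alpha'$ as exactly $r_O^i r_I^j \epsilon^k \alpha$ rather than merely up to the smaller equivalence. This compatibility — ensuring the equivalence on $\Delta/\alpha$ and the equivalence on $\Delta$ are governed by one common triple $(i,j,k)$, with $k=0$ forced whenever $Q_\Delta$ is not reflection-symmetric — is the delicate point, and it is exactly what lets the subsequent induction in the proof of the main bijection go through.
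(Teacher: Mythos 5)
Your reduction covers only two of the three situations that Proposition \ref{iffdiagonals} distinguishes, and the one you omit is precisely the one your strategy cannot reach. The lemma is stated for an \emph{arbitrary} $m$-diagonal $\alpha$ in $\Delta$, so you must also treat $\alpha$ of Type 2 or Type 3 that is not close to the border. For such an $\alpha$ the quotient $Q_{\Delta}/v_{\alpha}$ is disconnected, so there is no smaller annulus or polygon on which to invoke an inductive hypothesis or the result of \cite{to3}, and the ``factor out, solve the smaller problem, reattach'' scheme fails at the first step. (Your claim that ``$\alpha$ and $\alpha'$ must be of the same type'' is also not immediate from Proposition \ref{iffdiagonals} here, since in this case it only tells you that both quotient quivers are disconnected.) The paper disposes of all Type 2 and Type 3 diagonals, close to the border or not, by adapting the argument of \cite{to3} to the disc that such a diagonal cuts off; some substitute is needed --- for instance, first pinning down $(i,j,k)$ via a close-to-the-border diagonal supplied by Lemma \ref{existclosetotheborder} and then arguing that $\theta$ agrees everywhere with the induced geometric isomorphism --- and you supply neither.

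On the cases you do treat: for Type 1 your argument is essentially the paper's (factor out $\alpha$ and $\alpha'$, apply the Dynkin $A$ result of \cite{to3} to get a rotation $r^{i'}$ of the cut-open polygon, reattach). The ``delicate point'' you flag --- that this rotation lifts to one common pair $(i,j)$ on the annulus and pins down $\alpha'$ exactly --- is resolved in the paper by fixing a neighbour $\beta$ of $\alpha$ with an arrow of colour $c$ to $v_{\alpha}$ and using that $\theta$ preserves this arrow; you should make that step explicit rather than leave it as an acknowledged difficulty. For the close-to-the-border case, note also that the inductive hypothesis you invoke for $\Delta/\alpha$ (injectivity over $P_{p-1,q,m}$) is not contained in the stated hypothesis of the lemma, which concerns $\Delta$ in $P_{p,q,m}$ only; the paper again handles this case by adapting \cite{to3} rather than by recursion on $p+q$.
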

\begin{proof}
If $\alpha$ is of Type 2 or 3, the proof is a straightforward adaption
of the proof in \cite{to3} for the Dynkin $A$ case, and we omit it. 

Suppose $\alpha$ is of Type 1. Then $\alpha'$ is also of Type
1 by Proposition \ref{iffdiagonals}, and there exist some $i$, $j$ and
$k$ such that $\alpha' = r_O^i r_I^j \epsilon^k \alpha$. Take any
$m$-diagonal $\beta$ in $\Delta$ such that $v_\alpha$ has an arrow of
colour $c$ to $v_\alpha$, i.e. $\alpha$ and $\beta$ are in a common
$(m+2)$-gon in $\Delta$. Suppose $\theta(v_{\beta}) =
v_{\beta'}$. Then $\beta'$ and $\alpha'$ are in a common $(m+2)$-gon
in $\Delta'$, and $v_{\alpha'}$ has an arrow of colour $c$ to
$v_{\beta'}$.   

Factoring out $\alpha$ and $\alpha'$, as described right before Lemma
\ref{factoringtype1}, gives $(m+2)$-angulations $\Delta / \alpha$ and
$\Delta' / \alpha'$ of the regular polygon with $mp+mq+2$ vertices, and
corresponding quivers of type $A_{p+q-1}$. Then, clearly, $Q_{\Delta}
/ v_{\alpha} = Q_{\Delta / \alpha} \simeq Q_{\Delta' / \alpha'} =
Q_{\Delta'} / v_{\alpha'}$, and from \cite{to3} it follows that there
exist some integer $i'$ such that $\beta' = r^{i'} \beta$ and $\Delta'
/ \alpha' = r^{i'} \Delta / \alpha$, where $r^{i'}$ is rotating the
$(m+2)$-angulation of the regular polygon $i'$ steps in the
counterclockwise direction. Since $v_\beta$ has an arrow of colour $c$
to $v_\alpha$ and $v_\beta'$ has an arrow of colour $c$ to
$v_\alpha'$, the claim follows. 
\end{proof}

\begin{lem}\label{isoextension}
Let $\Delta$ be an $(m+2)$-angulation, and suppose that $Q_{\Delta}
\simeq Q_{\Delta'}$ for some $(m+2)$-angulation $\Delta'$ implies
$\Delta = \Delta'$ in $(\mathcal{T}_{p,q,m}/\!\!\sim)$.  

Let $\Delta(e)$ and $\Delta(e')$, with $e \neq e'$, be two extensions
of $\Delta$. Then $\Delta(e) = \Delta(e')$ in
$(\mathcal{T}_{p,q,m}/\!\!\sim)$ 
if and only if
$Q_{\Delta(e)} \simeq Q_{\Delta(e')}$.  
\end{lem}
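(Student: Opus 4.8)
The plan is to prove the biconditional $\Delta(e) = \Delta(e')$ in $(\mathcal{T}_{p,q,m}/\!\!\sim)$ if and only if $Q_{\Delta(e)} \simeq Q_{\Delta(e')}$, using the extension/factoring machinery together with the inductive hypothesis on $\Delta$. Let me think about what each direction requires.

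The forward direction ($\Rightarrow$) should be essentially immediate: if $\Delta(e) = \Delta(e')$ in the quotient, then one is obtained from the other by $r_O^i r_I^j \epsilon^k$, and since each of these operations preserves the corresponding coloured quiver (rotation preserves $(m+2)$-gons, and $\epsilon$ reverses arrows while $k$ is only allowed to be $1$ when $Q$ is reflection-symmetric so $Q_{\epsilon(\Delta)} \simeq Q_\Delta$ in that case), we get $Q_{\Delta(e)} \simeq Q_{\Delta(e')}$.

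The reverse direction is the substance. Suppose $Q_{\Delta(e)} \simeq Q_{\Delta(e')}$. In each extension, the added border edge becomes an $m$-diagonal close to the border — call the corresponding vertices $v_e$ and $v_{e'}$. Any isomorphism $\theta: Q_{\Delta(e)} \to Q_{\Delta(e')}$ must send $v_e$ to some vertex corresponding to an $m$-diagonal close to the border (by Proposition \ref{iffdiagonals}, since factoring out $v_e$ gives type $\widetilde{A}_{p,q}$, and this property is intrinsic to the vertex). If $\theta(v_e) = v_{e'}$, then factoring out both vertices and applying Lemma \ref{equivrot} to the recovered $\Delta$ (whose quivers are now isomorphic, invoking the inductive hypothesis) should force $\Delta(e')$ to be a rotation/flip of $\Delta(e)$. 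The main obstacle will be the case where $\theta(v_e) \neq v_{e'}$: then $\theta$ sends $v_e$ to the vertex of some \emph{other} close-to-border $m$-diagonal $f$ in $\Delta(e')$, and one must argue that factoring out that diagonal recovers a rotation of $\Delta$, i.e. that $\Delta(e')$ can itself be viewed as the extension $\Delta''(f)$ of a suitable $(m+2)$-angulation $\Delta''$ equivalent to $\Delta$, so that matching up the factored quivers via Lemma \ref{equivrot} still yields the desired equivalence of $\Delta(e)$ and $\Delta(e')$.

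Concretely, I would proceed as follows. First establish the easy direction as above. For the hard direction, fix an isomorphism $\theta$ and let $v_f = \theta(v_e)$, where $f$ is close to the border of the outer (resp. inner) polygon in $\Delta(e')$ — it lies on the same side as $e$ because factoring preserves the type $\widetilde{A}_{p,q}$ versus $\widetilde{A}_{q,p}$ distinction (handled by $\epsilon$ when $p=q$). Factoring out $v_e$ from $Q_{\Delta(e)}$ and $v_f$ from $Q_{\Delta(e')}$ yields, by the preceding factoring lemma, isomorphic quivers $Q_\Delta \simeq Q_{\Delta(e')/f}$. By hypothesis $\Delta(e')/f = \Delta$ in $(\mathcal{T}_{p,q,m}/\!\!\sim)$, so $\Delta(e')/f = r_O^i r_I^j \epsilon^k \Delta$ for suitable $i,j,k$ by Lemma \ref{equivrot}. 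The key geometric point — and the step I expect to be delicate — is that reattaching the close-to-border diagonal is the inverse of factoring, and that this inverse operation commutes appropriately with $r_O, r_I, \epsilon$; one must check that the edge $e'$ on $\Delta(e')/f$, when the rotation/flip is transported back, corresponds to $e$ on $\Delta$ up to the same equivalence, so that $\Delta(e') = r_O^i r_I^j \epsilon^k \Delta(e)$. I would note that the analogous statement in the Dynkin $A$ case appears in \cite{to3}, so the argument here is a straightforward adaptation once the role of the inner polygon and of $\epsilon$ is tracked, and I would defer the routine verification to that reference.
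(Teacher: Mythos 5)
Your proposal follows essentially the same route as the paper: the forward direction is immediate from invariance of $Q_\Delta$ under $r_O, r_I, \epsilon$, and for the converse you factor out the close-to-border diagonal $f$ with $v_f = \theta(v_e)$, apply the hypothesis together with Lemma \ref{equivrot} to get $\Delta(e')/f = r_O^i r_I^j \epsilon^k \Delta$, and then argue the extension position is forced. The only point you leave vague (and where you have a small notational slip, writing $e'$ where you mean $f$) is exactly the step the paper handles by tracking a neighbour $v_\alpha$ of $v_e$ with an arrow of colour $c$, using the conclusion $\gamma = r_O^i r_I^j \epsilon^k \alpha$ of Lemma \ref{equivrot} and the uniqueness of the extension producing an arrow of colour $c$ to $v_\gamma$.
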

\begin{proof}
Suppose $Q_{\Delta(e)} \simeq Q_{\Delta(e')}$. The coloured quiver
$Q_{\Delta(e)}$ is connected, so suppose $v_e$ has an arrow of colour
$c$ to $v_{\alpha}$, where $\alpha$ is an $m$-diagonal in
$\Delta(e)$. This means that $e$ is an edge in some $(m+2)$-gon
together with $\alpha$. Then there is some $v_{\beta}$ in
$Q_{\Delta(e')}$ with an arrow of colour $c$ to some vertex
$v_{\gamma}$, such that there exist an isomorphism $Q_{\Delta(e)}
\rightarrow Q_{\Delta(e')}$ sending $v_{\alpha}$ to $v_{\gamma}$ and 
$v_e$ to $v_{\beta}$. By Proposition \ref{iffdiagonals} we have that $\beta$ is
close to the border in $\Delta(e')$, since $e$ is close to the border
in $\Delta(e)$. Then we have that
$$Q_{\Delta} \simeq Q_{\Delta(e)/e} \simeq Q_{\Delta(e)} / v_e \simeq
Q_{\Delta(e')}/v_{\beta} \simeq Q_{\Delta(e') / \beta},$$
so hence, by assumption and Lemma \ref{equivrot}, $\Delta(e') / \beta =
  r_O^i r_I^j \epsilon^k \Delta$ and $\gamma = r_O^i r_I^j \epsilon^k
  \alpha$, where $k=0$ if $\Delta$ is not reflection-symmetric.
 
Suppose the $m$-diagonal $\alpha$ is of Type 2. Then $\alpha$ divides
the polygon into two parts $A$ and $B$, where, say, $B$ contains the
inner polygon. Then also $\gamma$ divides the polygon into two parts
$A'$ and $B'$, where, say, $B'$ contains the inner polygon. If $e$
lies in $A$, then $\beta$ lies in $A'$, and if $e$ lies in $B$, then
$\beta$ lies in $B'$. There is only one way to extend $\Delta(e') /
\beta$ in $A'$ (or $B'$) such that the new vertex has an arrow of
colour $c$ to $v_{\gamma}$. Hence $\beta = r_O^i r_I^j \epsilon^k e$,
and $r_O^i r_I^j \epsilon^k \Delta(e) = \Delta(e')$.

If $\alpha$ is of Type 3, we do similarly.

Suppose the $m$-diagonal $\alpha$ is of Type 1. Then $\gamma$ is also
of Type 1. There is only one way to extend $\Delta(e') / \beta$ (on
the outer or inner polygon) such that the new vertex has an arrow of
colour $c$ to $v_{\gamma}$, and as above we are done. 
\end{proof}

Now we can prove the main theorem in this section.

\begin{thm}\label{bijectionmain}
The function $\widetilde{\sigma}_{p,q,m}:
(\mathcal{T}_{p,q,m}/\!\!\sim) \rightarrow \mathcal{M}_{p,q,m}$ is a
bijection. 
\end{thm}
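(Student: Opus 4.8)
The plan is to establish that $\widetilde{\sigma}_{p,q,m}$ is both surjective and injective, where surjectivity is already in hand from the discussion preceding the theorem (every coloured quiver of type $\widetilde{A}_{p,q}$ arises as $Q_\Delta$ for some $(m+2)$-angulation). So the real content is injectivity: I must show that if $Q_\Delta \simeq Q_{\Delta'}$, then $\Delta = \Delta'$ in $(\mathcal{T}_{p,q,m}/\!\!\sim)$. I would prove this by induction on $n = p+q$, the number of $m$-diagonals (equivalently the number of vertices of the quiver), splitting into the two cases already flagged in the text: the case where $\Delta$ contains only $m$-diagonals of Type 1, and the case where it contains at least one $m$-diagonal of Type 2 or 3.

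First I would dispose of the Type 1-only case directly using Lemma \ref{onlytype1}: when $\Delta$ consists solely of Type 1 $m$-diagonals, $Q_\Delta$ is a cycle, and that lemma already asserts that any two $(m+2)$-angulations mapping to the same cycle are equal in $(\mathcal{T}_{p,q,m}/\!\!\sim)$. This handles the base structure of the induction and the ``pure'' case. For the main inductive step, suppose $\Delta$ contains an $m$-diagonal of Type 2 (the Type 3 case being symmetric, or reducible to Type 2 via the flip $\epsilon$ and the lemma on $Q_{\epsilon(\Delta)}$). By Lemma \ref{existclosetotheborder} there then exists an $m$-diagonal $\alpha$ close to the border of the outer polygon, and by Proposition \ref{iffdiagonals} the vertex $v_\alpha$ is exactly characterized by the property that $Q_\Delta/v_\alpha$ is connected of type $\widetilde{A}_{p-1,q}$. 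Given an isomorphism $\theta : Q_\Delta \to Q_{\Delta'}$, this characterization is preserved, so $\theta(v_\alpha) = v_{\alpha'}$ with $\alpha'$ close to the border of the outer polygon of $\Delta'$, and factoring out yields $Q_{\Delta/\alpha} \simeq Q_{\Delta'/\alpha'}$ on $P_{p-1,q,m}$.

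Now $\Delta/\alpha$ and $\Delta'/\alpha'$ are $(m+2)$-angulations with one fewer $m$-diagonal, so by the induction hypothesis $\Delta/\alpha = \Delta'/\alpha'$ in $(\mathcal{T}_{p-1,q,m}/\!\!\sim)$. The key is then to lift this equivalence back up through the extension operation: since $\Delta = (\Delta/\alpha)(e)$ and $\Delta' = (\Delta'/\alpha')(e')$ are extensions at the border edges $e,e'$ created by factoring out, and since $Q_{\Delta(e)} \simeq Q_{\Delta(e')}$ by hypothesis, Lemma \ref{isoextension} gives $\Delta(e) = \Delta(e')$ in $(\mathcal{T}_{p,q,m}/\!\!\sim)$, i.e.\ $\Delta = \Delta'$. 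I expect the main obstacle to be bookkeeping at the interface of the induction and the flip symmetry: one must check that the reflection-symmetric exception ($p=q$, $k\in\{0,1\}$) is correctly propagated so that the equivalence relation used in the inductive hypothesis matches the one in the conclusion, and that the hypotheses of Lemmas \ref{equivrot} and \ref{isoextension} (each phrased conditionally as ``suppose $Q_\Delta\simeq Q_{\Delta'}$ implies $\Delta=\Delta'$'') are genuinely supplied by the induction hypothesis rather than assumed circularly. Care is also needed to confirm that factoring out strictly decreases $p+q$ while keeping both $p-1\geq 2$ (or $q-1\geq 2$) or else reaching the Type 1-only base case handled by Lemma \ref{onlytype1}.
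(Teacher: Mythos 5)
Your proposal matches the paper's proof essentially step for step: surjectivity from the preceding discussion, the Type-1-only case dispatched by Lemma \ref{onlytype1} (via Lemma \ref{existclosetotheborder}), and injectivity by induction via factoring out a diagonal close to the border (reduced to the outer polygon using $\epsilon$), applying the inductive hypothesis to $\Delta/\alpha = \Delta'/\alpha'$, and lifting back with Lemma \ref{isoextension}. The only cosmetic difference is that the paper runs the induction separately on $p$ (with explicit base case $\widetilde{\sigma}_{2,2,m}$) and then on $q$, rather than on $p+q$; the boundary issues you flag are exactly the ones the paper's formulation is designed to handle.
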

\begin{proof}
We only need to show injection. Suppose
$\widetilde{\sigma}_{p,q,m}(\Delta) =
\widetilde{\sigma}_{p,q,m}(\Delta')$. We want to show that $\Delta =
\Delta'$ in $(\mathcal{T}_{p,q,m}/\!\!\sim)$. If $\Delta$ does not
contain a diagonal close to the border, we are finished by Lemma
\ref{onlytype1} and \ref{existclosetotheborder}, so we can assume that 
there exist a diagonal $\alpha$ close to the border of the inner or
outer polygon. If $\alpha$ is close to the border of the inner
polygon, we can consider $\epsilon(\Delta)$ instead, hence we can also
assume that $\alpha$ is close to the border of the outer polygon. It
is straightforward to verify that $\widetilde{\sigma}_{2,2,m}:
(\mathcal{T}_{2,2,m}/\!\!\sim) \rightarrow \mathcal{M}_{2,2,m}$ is 
bijective. 

Fix $q \geq 2$ and let $p > 2$. Suppose that
$\widetilde{\sigma}_{p-1,q,m}: (\mathcal{T}_{p-1,q,m}/\!\!\sim)
\rightarrow \mathcal{M}_{p-1,q,m}$ is injective for all $m$.

Let $\alpha$ be close to the border in $\Delta$. Then the $m$-diagonal
$\alpha'$ in $\Delta'$ corresponding to $v_{\alpha}$ in $Q$ is also
close to the border. By hypothesis $\Delta / \alpha = \Delta' /
\alpha'$ in $\mathcal{T}_{p-1,q,m}/\!\!\sim$. We can obtain $\Delta$
and $\Delta'$ from $\Delta/\alpha = \Delta'/\alpha'$ by extension. By
Lemma \ref{isoextension} all possible extensions of $\Delta / \alpha$
and $\Delta' / \alpha'$ give non-isomorphic quivers, unless $\Delta =
\Delta'$ in $\mathcal{T}_{p,q,m}/\!\!\sim$. We do the same induction
step on $q$, and as above we are done.
\end{proof}

We obtain the following corollary.

\begin{cor}
The number $\widetilde{a}(p,q,m)$ of elements in the mutation class of
any $m$-coloured quiver of type $\widetilde{A}_{p,q}$ is equal to the
number of $(m+2)$-angulations of $P_{p,q,m}$, up to rotation of the
outer and inner polygon, and up to "flip" if $p=q$ and the
$(m+2)$-angulation is reflection-symmetric. 
\end{cor}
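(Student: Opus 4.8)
The plan is to read the corollary off directly from Theorem \ref{bijectionmain}, since all the substantive content has already been established there. First I would recall that, by definition, $\mathcal{M}_{p,q,m}$ is the mutation class of $m$-coloured quivers of type $\widetilde{A}_{p,q}$, so that its cardinality is precisely the number $\widetilde{a}(p,q,m)$ we wish to compute; note also that this set is finite, since $\widetilde{A}_{p,q}$ is extended Dynkin and hence has finite mutation class, as recalled in the discussion following \cite{br}. Theorem \ref{bijectionmain} asserts that the induced map $\widetilde{\sigma}_{p,q,m}: (\mathcal{T}_{p,q,m}/\!\!\sim) \rightarrow \mathcal{M}_{p,q,m}$ is a bijection, and a bijection of finite sets preserves cardinality, so $\widetilde{a}(p,q,m) = |\mathcal{M}_{p,q,m}| = |(\mathcal{T}_{p,q,m}/\!\!\sim)|$.

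The remaining step is simply to unpack the equivalence relation $\sim$ so that the right-hand side matches the description in the statement. By construction, for $p \neq q$ two $(m+2)$-angulations $\Delta$ and $\Delta'$ are declared equivalent exactly when $\Delta' = r_O^i r_I^j \Delta$ for some integers $i$ and $j$; that is, when they agree up to rotating the outer polygon and the inner polygon. For $p = q$ one further allows $\Delta' = r_O^i r_I^j \epsilon^k \Delta$ with $k \in \{0,1\}$, where $k=0$ is forced unless $\Delta$ is reflection-symmetric; thus in this case equivalence additionally permits the flip $\epsilon$, but only when the $(m+2)$-angulation is reflection-symmetric. Hence the number of equivalence classes $|(\mathcal{T}_{p,q,m}/\!\!\sim)|$ is exactly the number of $(m+2)$-angulations of $P_{p,q,m}$ up to rotation of the outer and inner polygon, and up to flip when $p=q$ and the $(m+2)$-angulation is reflection-symmetric.

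Combining these two observations yields the claimed equality. I do not expect any genuine obstacle at this stage: the corollary is a direct translation of Theorem \ref{bijectionmain} through the explicit definition of $\sim$, and the only thing to verify is that the informal phrase ``up to rotation of the outer and inner polygon, and up to flip if $p=q$ and the $(m+2)$-angulation is reflection-symmetric'' is literally the defining condition of the equivalence relation used to form $(\mathcal{T}_{p,q,m}/\!\!\sim)$, which it is. Accordingly I would present the argument in a single short paragraph that cites Theorem \ref{bijectionmain} for the bijection and the definition of $\sim$ for the counting interpretation.
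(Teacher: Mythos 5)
Your proposal is correct and matches the paper's intent exactly: the paper states this corollary with no written proof, treating it as an immediate consequence of Theorem \ref{bijectionmain} together with the definition of the equivalence relation $\sim$, which is precisely the translation you carry out. Nothing further is needed.
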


We mention that these numbers have already been determined in
\cite{bprs} for $m=1$, and from those results we get the following corollary.

\begin{cor}
The number of triangulations of $P_{p,q}$, up to rotation and flip of
reflection symmetric triangulations, is given by

$$\tilde{a}_{p,q} = \left\{
      \begin{array}{ll}
		\frac{1}{2} \displaystyle\sum_{k|p,k|s}
                \frac{\phi(k)}{p+q} \binom{2p/k}{p/k} \binom{2q/k}{q/k}  & \mbox{if } p \neq q \\
		\frac{1}{2} \left(\frac{1}{2} \binom{2p}{p}) +
                \displaystyle\sum_{k|p} \frac{\phi(k)}{4r} \binom{2p/k}{p/k}^2\right) & \mbox{if } p = q,
	\end{array}\right.
 $$
where $\phi(k)$ is the Euler function.

\end{cor}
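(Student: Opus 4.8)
The plan is to obtain this corollary as the specialization $m=1$ of the preceding corollary, combined with the known enumeration of mutation classes of type $\widetilde{A}_{p,q}$ recorded in \cite{bprs}. Essentially no new geometry or combinatorics is needed beyond what has already been built up in Section \ref{bijection}; the only real work is to check that the $m=1$ instance of the equivalence relation $\sim$ matches the phrase ``up to rotation and flip of reflection-symmetric triangulations'' appearing in the statement.

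First I would specialize to $m=1$. In this case an $(m+2)$-angulation is precisely a triangulation of $P_{p,q}$, and an $m$-coloured quiver of type $\widetilde{A}_{p,q}$ is an ordinary quiver: the only available colours are $0$ and $m=1$, and by the rules of Section 2 a colour-$1$ arrow from $i$ to $j$ is forced to be the reverse of a colour-$0$ arrow from $j$ to $i$. The functions $r_O$ and $r_I$ of Section \ref{bijection} are already well-defined on diagonals for every $m$, and for $m=1$ the composites $r_O^i r_I^j$ realize all rotations of the outer and inner polygons, while $\epsilon$ is the flip. Hence $\sim$ identifies two triangulations exactly when they differ by such a rotation, together with a flip in the reflection-symmetric situation $p=q$ (with $k=0$ forced when the triangulation is not reflection-symmetric). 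This is precisely the equivalence named in the statement.

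Next I would apply Theorem \ref{bijectionmain}, the bijection $\widetilde{\sigma}_{p,q,m}$, with $m=1$. This yields that $\widetilde{a}_{p,q}$, the number of triangulations of $P_{p,q}$ up to the above equivalence, equals the cardinality of $\mathcal{M}_{p,q,1}$, i.e. the number of elements in the mutation class of a quiver of type $\widetilde{A}_{p,q}$. Finally I would quote \cite{bprs}, where this mutation-class size is computed explicitly, and read off the two branches $p \neq q$ and $p=q$ of the displayed formula, transporting it back through the bijection to $\widetilde{a}_{p,q}$.

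The hard part, such as it is, lies entirely outside the present paper: the closed formula, with its binomial coefficients and Euler-function weights, is the output of the counting argument of \cite{bprs}, which I would import wholesale. Within this framework the only point requiring a line of justification is the identification of $\sim$ with ``rotation and flip of reflection-symmetric triangulations,'' and this is immediate from the definitions once $m=1$ is fixed; so no genuine obstacle remains, and the corollary is a direct consequence of the preceding corollary together with \cite{bprs}.
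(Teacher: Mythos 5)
Your proposal is correct and takes essentially the same route as the paper: the corollary is obtained there with no separate argument, simply by combining the bijection of Theorem \ref{bijectionmain} (through the immediately preceding corollary, which restates it as an equality of cardinalities) at $m=1$ with the enumeration of mutation classes of type $\widetilde{A}_{p,q}$ from \cite{bprs}. Your extra check that the relation $\sim$ specializes at $m=1$ to ``rotation, and flip of reflection-symmetric triangulations'' is precisely the implicit content of that preceding corollary, so nothing is missing.
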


Furthermore, we can also consider the number of triangulations of
the annulus with $p+q$ marked points, i.e. two triangulations are
equivalent if and only if they are rotations of eachother.

\begin{cor}
The number of triangulations of $P_{p,q}$, up to rotation, is given by
$$\frac{1}{2} \displaystyle\sum_{k|p,k|s}
                \frac{\phi(k)}{p+q} \binom{2p/k}{p/k} \binom{2q/k}{q/k},
 $$
where $\phi(k)$ is the Euler function.
\end{cor}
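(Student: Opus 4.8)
The plan is to deduce the statement from the preceding corollary, treating $p\neq q$ and $p=q$ separately, with essentially all the content lying in the latter case. For $p\neq q$ the equivalence relation $\sim$ on $\mathcal{T}_{p,q,1}$ is by definition generated by the rotations $r_O$ and $r_I$ alone (there is no flip), so ``up to rotation'' and ``up to $\sim$'' are literally the same relation. Hence, via Theorem~\ref{bijectionmain} together with the enumeration of the mutation class in \cite{bprs}, the number of triangulations of $P_{p,q}$ up to rotation equals $\widetilde a_{p,q}$, which is precisely $\tfrac12\sum_{k\mid p,\,k\mid q}\tfrac{\phi(k)}{p+q}\binom{2p/k}{p/k}\binom{2q/k}{q/k}$ by the first branch of the preceding corollary. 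Nothing further is needed in this case, so the asserted formula already holds whenever $p\neq q$.

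So suppose $p=q$. Write $R$ for the quantity we want (triangulations of $P_{p,p}$ up to rotation) and $\widetilde a_{p,p}$ for the number up to rotation and flip supplied by the preceding corollary. By the flip lemma, $\epsilon$ is an involution of $\mathcal{T}_{p,p,1}$, and since it normalizes the rotation subgroup $\langle r_O,r_I\rangle$ (the flip swaps the two boundaries, hence conjugates $r_O$ to $r_I$ and back) it descends to an involution on the set of rotation-classes. The $\langle r_O,r_I,\epsilon\rangle$-orbits are then exactly the $\epsilon$-orbits on rotation-classes: a reflection-symmetric class remains a single orbit, while two classes interchanged by $\epsilon$ merge into one. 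Writing $F$ for the number of reflection-symmetric rotation-classes, orbit counting therefore yields
$$\widetilde a_{p,p}=\tfrac12\bigl(R+F\bigr),\qquad\text{so}\qquad R=2\widetilde a_{p,p}-F.$$

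It remains to compute $F$, and I expect $F=\tfrac12\binom{2p}{p}$, which is exactly the ``extra'' summand occurring in the second branch of the preceding corollary. Indeed, writing that branch as $\widetilde a_{p,p}=\tfrac14\binom{2p}{p}+\tfrac{1}{8p}\sum_{k\mid p}\phi(k)\binom{2p/k}{p/k}^2$ and substituting $F=\tfrac12\binom{2p}{p}$ into $R=2\widetilde a_{p,p}-F$ cancels the $\binom{2p}{p}$ terms and leaves $R=\tfrac{1}{4p}\sum_{k\mid p}\phi(k)\binom{2p/k}{p/k}^2$, which is the claimed formula at $q=p$; as a numerical check, $p=2$ gives $F=3$, $\widetilde a_{2,2}=4$ and $R=5$. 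The main obstacle is thus the verification that $F=\tfrac12\binom{2p}{p}$, i.e.\ counting the triangulations of $P_{p,p}$ that are fixed, up to rotation, by the flip. I would establish this either by a direct Burnside computation for the residual cyclic action restricted to the reflection-symmetric configurations, or by isolating the value of $F$ already implicit in the fixed-point analysis of \cite{bprs}. Once $F=\tfrac12\binom{2p}{p}$ is in hand, combining it with the displayed orbit-counting identity and the preceding corollary completes the proof.
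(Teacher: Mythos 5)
The paper offers no argument for this corollary beyond the remark that it follows from the enumeration in \cite{bprs}; the intended route is evidently to read both this count and the preceding one directly out of \cite{bprs} via Theorem \ref{bijectionmain}, not to deduce the second corollary from the first. Your $p\neq q$ case is fine and coincides with that reading. For $p=q$, however, your reduction contains a gap beyond the one you flag yourself. The orbit-counting identity $\widetilde a_{p,p}=\tfrac12(R+F)$ presupposes that $\widetilde a_{p,p}$ counts orbits of the full group $\langle r_O,r_I,\epsilon\rangle$. It does not: in the equivalence relation $\sim$ of Section \ref{bijection} the flip is only applied when $\Delta$ is reflection-symmetric, so a non-reflection-symmetric $\Delta$ and its flip $\epsilon(\Delta)$ lie in \emph{distinct} classes of $(\mathcal{T}_{p,p,m}/\!\!\sim)$ --- they must, since $Q_{\Delta}\not\simeq Q_{\epsilon(\Delta)}$ and $\widetilde\sigma_{p,p,m}$ is injective --- whereas they form a single $\langle r_O,r_I,\epsilon\rangle$-orbit. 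Since the paper asserts that non-reflection-symmetric triangulations exist for $p=q$ (``this is not always the case''), the correct identity is $\widetilde a_{p,p}=\tfrac12(R+F)+N$, where $N$ is the number of flip-pairs of non-reflection-symmetric rotation-classes, and what your cancellation actually requires is $F+2N=\tfrac12\binom{2p}{p}$, not $F=\tfrac12\binom{2p}{p}$. Your numerical check at $p=2$ only verifies the arithmetic consistency of the two displayed formulas, not that $F=3$ and $N=0$ there.

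Second, even with the identity corrected, the combinatorial input $F+2N=\tfrac12\binom{2p}{p}$ is precisely the nontrivial content of the $p=q$ case, and you explicitly defer it (``the main obstacle is thus the verification that $F=\tfrac12\binom{2p}{p}$''). Neither of the two routes you propose is carried out, and the second of them --- extracting the value from the fixed-point analysis of \cite{bprs} --- is essentially an admission that the statement must be taken from the source the paper itself cites. So as written the proposal is not a proof: it reduces the corollary to an unproved (and, because of the first point, slightly misstated) counting claim. A clean fix would be either to prove $F+2N=\tfrac12\binom{2p}{p}$ by a direct Burnside/fixed-point computation for the flip acting on rotation-classes of triangulations of $P_{p,p}$, or to do as the paper does and quote the rotation-only count for $p=q$ directly from \cite{bprs}.
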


\small

\normalsize

\end{document}